\newif\ifrs
\ifrs \usepackage{mathrsfs} \fi  
\newif\ifcol
\newtheorem{theorem}{Theorem}[section]
\newtheorem{lemma}[theorem]{Lemma}
\newtheorem{definition}[theorem]{Definition}
\newtheorem{remark}[theorem]{Remark}
\newtheorem{example}[theorem]{Example}
\numberwithin{equation}{section}
\newtheorem{rem}[theorem]{Remark}
\newtheorem{theorem*}{Theorem}
\newtheorem{ass*}[theorem*]{Assumption}
\newtheorem{note*}[theorem*]{Note}
\newtheorem{lemma*}[theorem*]{Lemma}
\newtheorem{definition*}[theorem*]{Definition}
\newtheorem{proposition*}[theorem*]{Proposition}
\newtheorem{corollary*}[theorem*]{Corollary}
\newtheorem{remark*}[theorem*]{Remark}
\newtheorem{example*}[theorem*]{Example}
\numberwithin{equation}{section}
\newif\ifcol
\newcommand{\colorr}{\color[rgb]{0.8,0,0}}
\newcommand{\colorg}{\color[rgb]{0,0.5,0}}
\newcommand{\colorn}{\color[rgb]{1,1,1}}
\newcommand{\colorr}{\color{black}}
\newcommand{\colorg}{\color{black}}
\newcommand{\colorn}{\color{black}}
\newif\ifcol
\newif\ifcol
\newif\ifcol
\newif\ifcol
\newif\ifcol
\def\ol{\overline}
\def\wt{\widetilde}
\def\wh{\widehat}
\def\bd{\begin{description}}
\def\ed{\end{description}}
\def\D2{\bbD_{2,\infty-}}
\def\tj{{t_j}}
\def\D{{\bf D}}
\def\F{{\bf F}}
\def\cala{{\cal A}}
\def\calb{{\cal B}}
\def\cald{{\cal D}}
\def\cale{{\cal E}}
\def\calf{{\cal F}}
\def\calg{{\cal G}}
\def\calh{{\cal H}}
\def\calj{{\cal J}}
\def\calk{{\cal K}}
\def\calm{{\cal M}}
\def\caln{{\cal N}}
\def\cals{{\cal S}}
\def\calt{{\cal T}}
\def\calx{{\cal X}}
\def\caly{{\cal Y}}
\def\ds{\displaystyle}
\def\yeq{\>=\>}
\def\yleq{\>\leq\>}
\def\sfm{{\sf m}}
\def\sfd{{\sf d}}
\def\sfp{{\sf p}}
\def\sfq{{\sf q}}
\def\sfr{{\sf r}}
\def\half{\frac{1}{2}}
\def\down{\downarrow}
\def\y{\vspace*{3mm}\\}
\def\halflineskip{\vspace*{3mm}}
\def\nn{\nonumber}
\def\be{\begin{equation}}
\def\ee{\end{equation}}
\def\bea{\begin{eqnarray}}
\def\eea{\end{eqnarray}}
\def\beas{\begin{eqnarray*}}
\def\eeas{\end{eqnarray*}}
\def\bi{\begin{itemize}}
\def\ei{\end{itemize}}
\def\im{\item}
\def\bd{\begin{description}}
\def\ed{\end{description}}
\newcommand{\bbC}{{\mathbb C}}
\newcommand{\bbD}{{\mathbb D}}
\newcommand{\bbH}{{\mathbb H}}
\newcommand{\bbN}{{\mathbb N}}
\newcommand{\bbR}{{\mathbb R}}
\newcommand{\bbT}{{\mathbb T}}
\newcommand{\bbV}{{\mathbb V}}
\newcommand{\bbY}{{\mathbb Y}}
\newcommand{\bbZ}{{\mathbb Z}}
\newcommand{\sfa}{{\sf a}}
\newcommand{\sfb}{{\sf b}}
\def\sfr{{\sf r}}
\def\sfp{{\sf p}}
\def\sfd{{\sf d}}
\def\sfm{{\sf m}}
\begin{document}

\title{
	Quasi-maximum likelihood estimation and penalized estimation under non-standard conditions
\footnote{
		$2020 ~Mathmatics ~Subject~Classification $: Primary 62F12; Secondary 62E20. 
		
This work was in part supported by 
Japan Science and Technology Agency CREST  JPMJCR2115; 
Japan Society for the Promotion of Science Grants-in-Aid for Scientific Research 
No. 17H01702 (Scientific Research);  
and by a Cooperative Research Program of the Institute of Statistical Mathematics. 
}
}
\author[1,2]{Junichiro Yoshida}
\author[1,2]{Nakahiro Yoshida}
\affil[1]{Graduate School of Mathematical Sciences, University of Tokyo
\footnote{Graduate School of Mathematical Sciences, University of Tokyo: 3-8-1 Komaba, Meguro-ku, Tokyo 153-8914, Japan. e-mail: nakahiro@ms.u-tokyo.ac.jp}
}
\affil[2]{CREST, Japan Science and Technology Agency
}
\maketitle
\ \\
{\it Summary} 
{
The purpose of this article is to develop a general parametric estimation theory  that allows the  derivation of  the limit distribution of estimators in non-regular models where the true parameter value may lie on the boundary of the parameter space or where even identifiability  fails.
For that, we propose a more general local approximation of the parameter space (at the true value) than previous studies. This estimation theory is comprehensive in that it can handle  penalized estimation  as well as quasi-maximum likelihood estimation under such non-regular models.  Besides, our results can apply to the so-called non-ergodic statistics, where the Fisher information is random in the limit,  including the regular experiment that is locally asymptotically mixed normal. In penalized estimation, depending on the boundary constraint, even the Bridge  estimator  with $q<1$ does not necessarily give selection consistency. Therefore, some sufficient condition for selection consistency is described, precisely evaluating the balance between the boundary constraint and the form of the penalty.

Examples handled in the paper are: (i) ML estimation of the generalized inverse Gaussian distribution, (ii) quasi-ML estimation of  the diffusion parameter in a non-ergodic It\^o process whose parameter space consists of  positive semi-definite symmetric matrices, while  the drift parameter is treated as nuisance and (iii) penalized ML estimation of variance components of random effects in  linear mixed models. 

}
\ \\
\ \\
{\it Keywords and phrases}: 
Quasi-likelihood; Penalized likelihood; Mixed normal distribution;  Boundary;  Non-identifiable;  Variable selection; Diffusion process; Linear mixed model
\ \\
\section{Introduction}
The purpose of this article is to develop a general parametric estimation theory  that allows the  derivation of  the limit distribution of estimators in non-regular models where the true parameter value may lie on the boundary of the parameter space or where even identifiability  fails.  (Non-identifiable cases are dealt with specifically  in the subsequent paper  \cite{yoshida2022penalized}.) We generalize the local asymptotic theory, established by  Ibragimov and Khas'minskii  \cite{ibragimov1972asymptotic, IbragimovHascprimeminskiui1981},  which uses the convergence of the random field formed by the likelihood ratios.

Let us recall their result briefly. We denote by $\cale^\epsilon =\{\calx, \cala, P_\theta^\epsilon, \theta \in\Theta\}$  a sequence of statistical experiments with $\epsilon \in (0, 1]$ for a parameter space $\Theta \subset \bbR^m$. Let $\varphi(\epsilon)$ be a positive normalizing factor tending to zero as $\epsilon \downarrow 0$. For a $\theta^* \in {\rm Int}(\Theta)$, define a random field $Z_\epsilon$ by 
\bea\label{Z_epsilon}
Z_\epsilon (u) = \frac{dP^\epsilon_{\theta^*+\varphi(\epsilon)u}}{dP^\epsilon_{\theta^*}}(X^\epsilon)\qquad(u \in \bbR^m).
\eea
Suppose that $Z_\epsilon$ can be extended on a $\hat C(\bbR^m)$-valued random variable\footnote{$\hat C(\bbR^m)$ is the space of  continuous functions on $\bbR^m$ that tends to zero at the infinity} and that the extended $Z_\epsilon$ converges to $Z$ in $\hat{C}(\bbR^m)$, where $Z$ is some $\hat C (\bbR^m)$-valued random variable. Then we have
\bea\label{Ibragimov}
\hat u_\epsilon =\underset{ u}{\rm argmax\,} Z_\epsilon(u) \overset{d}{\to}   \hat u = \underset{ u\in \bbR^m}{\rm argmax\,} Z(u).
\eea
 When the experiment $\cale^\epsilon$ is locally asymptotically normal (LAN), $Z(u)$ takes the form of $Z(u)=\exp\big(\Delta(\theta^*)-2^{-1}I(\theta^*)[u^{\otimes2}]\big)$, where $\Delta(\theta^*) \sim N_m\big(0, I(\theta^*)\big)$ and $I(\theta^*)$ is the Fisher information matrix, and we obviously obtain  $\hat u= I(\theta^*)^{-1}\Delta(\theta^*) \sim N_m\big(0, I(\theta^*)^{-1}\big)$. In this paper, we extend this theory to a more general one  
 that  can be  applied even when $\theta^*$ may lie on the boundary of $\Theta$. Moreover, in our extended theory,   $Z_\epsilon$  is not necessarily defined as (\ref{Z_epsilon}) and not necessarily asymptotically quadratic. Therefore, as explained below,  penalized estimation can be handled as well as quasi-maximum likelihood estimation. 
 Besides, our results can apply to the so-called non-ergodic statistics, where the Fisher information is random in the limit,  including the regular experiment that is locally asymptotically mixed normal (LAMN).

When the true parameter value $\theta^*$  lies on the boundary of $\Theta$,  a cone set locally approximating  $\Theta$ at $\theta^*$ is usually used. This set was 
introduced by Chernoff  \cite{chernoff1954distribution}, and with it, Self and Liang \cite{self1987asymptotic} derived the limit distribution of  the maximum likelihood estimation (MLE)   when $\theta^*$ is on the boundary.  Also, using a cone set that is a generalization of Chernoff's cone set, Andrews \cite{andrews1999estimation} derived the asymptotic distribution of the quasi-maximum likelihood estimator (QMLE)  when $\theta^* $ is on the boundary. These cone sets are denoted by $\Lambda$.
However, if  the rate of  convergence of a given estimator  to $\theta^*$ is different for each component, then  $\Theta$  may not be locally approximated by any cone set. In this paper, the asymptotic behavior of the estimator  is derived even in such complex cases  by generalizing the local approximation method for the parameter space.  Instead of $\Lambda$, we denote by $U$ the general set locally approximating $\Theta$ at $\theta^*$, which is not necessarily a cone. (Examples of $U$ are listed in Section 2.3.)  Then our theorem (Theorem \ref{thm1}) shows that (\ref{Ibragimov}) is generalized as
\bea\label{Ibragimovgeneral}
\hat u_\epsilon =\underset{ u}{\rm argmax\,} Z_\epsilon(u) \overset{d}{\to}   \hat u = \underset{ u\in  U}{\rm argmax\,} Z(u).
\eea
Besides, it is  shown that  our approximation method with $U$ is a generalization of the previous ones with  $\Lambda$ such as  Chernoff  \cite{chernoff1954distribution} and  Andrews \cite{andrews1999estimation} (in Section 2.4). As an example of a direct application of Theorem \ref{thm1}, the maximum likelihood estimation of the generalized inverse Gaussian distribution is treated in Section 2.5.

As the first main application of Theorem \ref{thm1}, we  derive the limit distribution of  the QMLE when identifiability is valid but the true parameter value possibly lies on the boundary  (Theorem \ref{thmB}).  An example   is quasi-maximum likelihood estimation of  the diffusion parameter in a non-ergodic It\^o process whose parameter space consists of  semi-positive definite symmetric matrices, while  the drift parameter is treated as nuisance.  
For this example,  a nuisance  parameter $\tau \in \calt$ is considered along with $\theta$ although the effect of $\tau$ is explicitly assumed to asymptotically disappear. 

As the second application,  we also derive the limit distribution of  the penalized quasi-maximum likelihood estimator (PQMLE) when identifiablity is valid but the true parameter value may be on the boundary (Theorem \ref{thmpenalized}). 
 One of the most simple penalty terms  is the Bridge (Frank and Friedman 1993)  expressed as 
\beas
p_\lambda (\theta) \yeq 
\lambda \sum_{i=1}^{\sfp} |\theta_i|^q
 \qquad\big(\theta = (\theta_1, ..., \theta_\sfp) \in \Theta\big),
\eeas
where $q > 0$ is a constant, and $\lambda > 0$ is a tuning parameter. For $q \leq 1$, the estimator performs
variable selection. Especially, when $q=1$, the estimator is called the Lasso (Tibshirani
\cite{tibshirani1996regression}). 
Under regular conditions, the penalized maximum likelihood estimator (PMLE) has been studied before. In the Bridge case,  Knight and Fu \cite{fu2000asymptotics} derived  the limit distribution of the PMLE. Zou \cite{zou2006adaptive} proposed the adaptive Lasso and showed its oracle property. 

For the PQMLE under regular conditions, various results have been recently shown: see e.g.  De Gregorio and Iacus \cite{de2012adaptive} and Ga\"{i}ffas and Matulewicz \cite{gaiffas2019sparse}. Masuda and Shimizu \cite{masuda2017moment} and Kinoshita and Yoshida \cite{kinoshita2019penalized} derived the moment convergence of  PQMLE. 
  Both of those studies derived the polynomial type large deviation inequality (PLDI), an inequality given by Yoshida \cite{yoshida2011polynomial}  for the moment convergence of the estimator.  Also, using the PLDI,  Umezu et al.$\,$\cite{umezu2019aic}  derived an information criterion based on the definition of the AIC.

Under a non-regular condition that the true parameter may be on the boundary,  Wong et al.\,\cite{wong2016oracle} studied the  PMLE with the adaptive Lasso  and  derive its oracle property, assuming that the parameter space $\Theta$ can be simply expressed in the form of a direct product. 
 In this paper, under a boundary constraint, the PQMLE with many kinds of penalties including  the Bridge and the adaptive Lasso is studied, not assuming  that $\Theta$ can be decomposed as a direct sum. Then depending on the boundary constraint, even the Bridge  estimator  with $q<1$ does not necessarily show selection consistency. Therefore, some sufficient condition for selection consistency is described, precisely evaluating the balance between the boundary constraint and the form of the penalty in Section 4.3. As an example,  we discuss  the condition for the oracle property in penalized estimation of variance components of random effects in  linear mixed models, penalizing only the diagonal elements. Random effect selection in mixed models is an important application of penalized estimation with  a boundary constraint, and is   an area of extensive research (see M\"uller et al.\,\cite{muller2013model} for a review).
 {Bondell, Krishna and Ghosh \cite{bondell2010joint} and Ibrahim et al.\,\cite{ibrahim2011fixed} consider penalized estimation using  Cholesky parametrizations, while we treat variance components without re-parametrization.}

 { As the third application, in the subsequent paper \cite{yoshida2022penalized}, we deal with  non-identifiable models where   the true value of $\theta \in \Theta$ is not necessarily uniquely determined.  Then the usual estimators such as the QMLE are difficult to use for the purpose of parameter estimation since their limit cannot  be determined uniquely. 
 Therefore, we  stabilize the asymptotic behavior  by considering a suitable penalty term, and  handle the PQMLE.  Let us denote by $\Theta^*$ the set consisting of the true values of $\theta \in \Theta$.
 We add to quasi-log likelihood function a penalty term  whose minimizer  on $\Theta^*$ is  uniquely determined.  Denote the minimizer by $\theta^* \in \Theta^*$. Then the corresponding PQMLE converges to $\theta^*$ in probability, and we can derive its  limit distribution and selection consistency.  This holds even when another unknown  parameter $\tau \in \calt$ which can be a nuisanse  is considered along with $\theta$. 

 The article is organized as follows. In Section 2, the  general theory for (\ref{Ibragimovgeneral})  is described. The key theorem is stated in Section 2.2.  (Its proof is given in Section 5.)   An application to the quasi-maximum likelihood estimation is depicted in Section 3.  (Some proofs are given in Section \ref{section6}.)  Also, in Section  4,  the penalized quasi-maximum likelihood estimation is described, and sufficient conditions for selection consistency 
 are given.  }

\section{General theory}
\subsection{Settings}
Denote by $\Xi = \Theta \times \mathcal{T}$ the unknown parameter space, 
where $\Theta$ is a measurable subset of  ${ \mathbb{R}^\sfp}$, and $ \mathcal{T}$ is a measurable subset of $\mathbb{R}^{\sfq}$. 
In this section, we estimate the true value {(or one of the true values)} $\theta^*$ of the unknown parameter $\theta\in\Theta$, while $\tau\in \calt$ is treated as a nuisance parameter.  
Given a probability space $(\Omega, \mathcal{F}, P)$ specifying the distribution of the data, the statistical inference will be carried out based on a continuous random field $\mathbb{H}_T : \Omega \times \Xi \rightarrow \mathbb{R}$ for 
$T\in\bbT\subset\bbR$ satisfying $\sup\bbT=\infty$,  where a continuous random field means that for each $\omega\in \Omega$, $\bbH_T(\omega)$ is continuous on $\Xi$.
Examples of $\bbH_T$ are (quasi-)log likelihood functions and penalized (quasi-)log likelihood functions.

For each $T\in\bbT$, take  an arbitrary $\calt$-valued random variable $\hat\tau_T$, and suppose that   we can take  a $\Theta$-valued random variable $\hat\theta_T$  that  { asymptotically maximizes}  $\bbH_T(\theta, \hat\tau_T)$ on $\Theta$. A common example of $(\hat\theta_T, \hat\tau_T)$ is the joint maximizer. Under some conditions in the next subsection, we derive the asymptotic behavior of the estimator $\hat\theta_T$. Note that those conditions depend neither on   $\hat\tau_T$ nor  on the way $\hat\theta_T$ is taken, and that the result shows that the asymptotic behavior of $\hat{\theta}_T$ is the same for different sequences $\hat{\tau}_T$. 


\subsection{A limit theorem}


Define $U_T$ by 
\begin{eqnarray}\label{U_T}
U_T = \{u\in \mathbb{R}^\sfp ; \theta^* + a_T u \in \Theta\} 
\end{eqnarray}
for a deterministic sequence $a_T$ in $GL(\sfp)$ with $\lim_{T\to\infty}\|a_T\| \rightarrow 0$, where  for any real matrix $A$,   denote by  $A^\prime$ the transpose of $A$,  and  $\|A\|$ denotes $\big\{{\rm Tr}(AA^\prime)\}^{\frac{1}{2}}$.
We mimic the local asymptotic theory to define the random field $\mathbb{Z}_T : \Omega \times U_T \times\calt  \rightarrow \mathbb{R} $ by 
\begin{eqnarray}\label{Z_T}
	\mathbb{Z}_T(u, \tau) &=& 
	\exp\bigg(\mathbb{H}_T(\theta^* + a_Tu, \tau) - \mathbb{H}_T(\theta^*, \tau)\bigg)
	\qquad
	(u\in U_T).
\end{eqnarray} 
 Let $C(\bbR^\sfp)$ denote the set  of all  continuous functions  defined on $\bbR^\sfp$. We give it the metric topology induced by a metric $d_\infty$ defined as
\beas
d_\infty(f, g)=\sum_{n=1}^\infty2^{-n}\bigg(1\wedge\max_{|x|\leq n}|f(x)-g(x)|\bigg)\qquad\big(f,g \in C(\bbR^\sfp)\big).
\eeas
For each $T\in\calt$, let $\mathbb{V}_T$ be a $C(\mathbb{R}^\sfp)$-valued random variable.
Also, let $\mathbb{Z}$ be a $C(\mathbb{R}^\sfp)$-valued random variable which will be considered as the limit of $\mathbb{Z}_T$. Let $\calg$ be a sub-$\sigma$-algebra of $\calf$.

 For a topological space $S$,  a sequence of  $S$-valued random variables $Y_T$ $(T \in \bbT)$ and  a 
 $S$-valued random variable $Y$, we say that $Y_T$ converges stably with limit $Y$ and write as $Y_T {\to} ^{d_s(\calg)}Y$ if and only if for any  bounded continuous function  $f$ defined on $S$ and for any bounded $\calg$-measurable random variable $Z$,
 \beas
 E\big[f(Y_T)Z\big] \to  E\big[f(Y)Z\big] \qquad(T \to \infty).
 \eeas 
Also, for $A\subset \mathbb{R}^\sfp$, $\delta >0$, and $R>0$, we define  $A^{\delta}$ and $A(R)$ as
\begin{eqnarray*}  
	A^{\delta} = \big\{ x\in\mathbb{R}^\sfp ; \inf_{a \in A}|x-a| < \delta \big\} 
	&\text{and}&
	\ A(R)=A\cap{\overline{B_R}}, 
\end{eqnarray*}
respectively, where $B_R = \{x \in \mathbb{R}^\sfp ; |x|< R\}$ and  $\overline {B}$ denotes the closure of $B$ for a subset $B$.
Then we  define $U \subset \mathbb{R}^\sfp$ by \begin{eqnarray}\label{U}
 U = \bigcap_{\delta >0} \bigcup_{N=1}^{\infty} \bigcap_{T\geq N} 	{U_T}^{\delta} .\end{eqnarray}
It will be shown in Lemma \ref{lem1} that $U$ is closed. We consider the following conditions.
\bd
\im[{\bf[A1]}]
$\ds \varlimsup_{R \rightarrow \infty} \varlimsup_{T\rightarrow \infty}
P\bigg[\sup_{U_T \times \mathcal{T},  |u| \geq R} \mathbb{Z}_T(u, \tau)  \geq 1\bigg] \yeq0$.
\im[{\bf[A2]}] For every $R>0$, as $T\to\infty$,
\begin{eqnarray}
	\sup_{U_T(R) \times \mathcal{T}}|\mathbb{Z}_T(u, \tau) - \mathbb{V}_T(u)| \overset{P}{\rightarrow} 0, \nonumber\\ 
\mathbb{V}_{T} \overset{d_s(\calg)}{\rightarrow} \mathbb{Z}
\qquad \text{in } C(\overline{B_R}) .\label{2110240341}
\end{eqnarray}
\ed
More precisely, the convergence (\ref{2110240341}) means 
$\bbV_T|_{\sf C}\overset{d_s(\calg)}{\to}\bbZ|_{\sf C}$ for ${\sf C}=C(\overline{B_R} )$. \halflineskip

\bd
\im[{\bf[A3]}] $\ds U \supset \bigcap_{N=1}^{\infty} \overline{\bigcup_{T\geq N} U_T} $.

\im[{\bf[A4]}] 
There exists a $U$-valued random variable $\hat{u}$ such that with probability $1$,  
\begin{eqnarray*}{\mathbb{Z}(\hat{u})} = \sup_{U }{\mathbb{Z}(u)}
\end{eqnarray*}
and such that with probability 1, for all $u \in U$ with $u \neq \hat{u}$,
\begin{eqnarray*}  {\mathbb{Z}(u)} < {\mathbb{Z}(\hat{u})}.
\end{eqnarray*}

\im[{\bf[A5]}] There exist some $ T_0 \in \bbT$ and a sequence of  $U$-valued random variables $\{\hat{v}_T\}_{T\geq T_0, T\in \bbT}$ such that with probability $1$,  
\begin{eqnarray*}{\mathbb{V}_T(\hat{v}_T)} = \sup_{U}{\mathbb{V}_T(u)}
\end{eqnarray*}
and such that $\{\hat v_T\}_{T\geq T_0, T\in\bbT}$ is tight.

\ed
Define $\hat{u}_T$ by $\hat{u}_T= a_T^{-1}(\hat{\theta}_T - \theta^*)$. The following theorem constitutes the general result underlying Sections 3 and 4 and the subsequent paper \cite{yoshida2022penalized}.

\begin{theorem}\label{thm1} 
	 Under {\rm [{\bf A1}]-[{\bf A4}]},
		\begin{eqnarray}\label{hatu}
		\hat{u}_T \overset{d_s(\calg)}{\rightarrow} \hat{u}.
	\end{eqnarray} 
Moreover, if {\rm [{\bf A5}]} also holds, then
	\begin{eqnarray}
		\hat{u}_T -\hat{v}_T=o_P(1).\label{hatu-hatv}
	\end{eqnarray} 
\end{theorem}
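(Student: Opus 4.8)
The plan is to read Theorem \ref{thm1} as a generalized argmax continuous mapping theorem in which the random field $\mathbb{Z}_T(\cdot,\hat\tau_T)$ converges stably to $\mathbb{Z}$ while the admissible domain $U_T$ converges to $U$ in the Kuratowski sense. The argument then splits into three movements: a tightness step that localizes $\hat u_T$ to a large ball using $[\textbf{A1}]$; an identification step on compact sets using $[\textbf{A2}]$; and the argmax step proper, where $[\textbf{A3}]$ together with the defining formula $U=\bigcap_{\delta>0}\bigcup_{N}\bigcap_{T\ge N}U_T^{\delta}$ governs the interplay between the moving domain and the limit maximizer, and the uniqueness in $[\textbf{A4}]$ pins the limit down.

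First I would show that $\hat u_T$ is tight. Note that $\hat u_T=a_T^{-1}(\hat\theta_T-\theta^*)\in U_T$ since $\hat\theta_T\in\Theta$, and that $\mathbb{Z}_T(0,\tau)\equiv 1$. Because $\hat\theta_T$ asymptotically maximizes $\mathbb{H}_T(\cdot,\hat\tau_T)$ on $\Theta$ and $\theta^*\in\Theta$, translating to $\mathbb{Z}_T$ gives $\mathbb{Z}_T(\hat u_T,\hat\tau_T)\ge\mathbb{Z}_T(0,\hat\tau_T)-o_P(1)=1-o_P(1)$. Condition $[\textbf{A1}]$, being uniform in $\tau\in\mathcal{T}$, states that for $R$ large the supremum of $\mathbb{Z}_T$ over $\{|u|\ge R\}\times\mathcal{T}$ is eventually below $1$ with probability close to $1$; hence $|\hat u_T|<R$ with probability close to $1$, i.e. $\hat u_T=O_P(1)$. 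This lets me replace $U_T$ by the truncation $U_T(R)$ throughout and work on $\overline{B_R}$ for $R$ large.

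On $\overline{B_R}$ the first half of $[\textbf{A2}]$, again uniform in $\tau$, allows substitution of the random $\hat\tau_T$ and yields $\sup_{U_T(R)}|\mathbb{Z}_T(\cdot,\hat\tau_T)-\mathbb{V}_T|\overset{P}{\to}0$, so that $\mathbb{Z}_T(\cdot,\hat\tau_T)$ inherits the stable convergence $\mathbb{V}_T\overset{d_s(\calg)}{\to}\mathbb{Z}$ in $C(\overline{B_R})$. The core is then a pathwise argmax comparison run along subsequences after a Skorokhod-type representation realizing the $\calg$-stable limit. For the lower bound I would use that $\hat u\in U$ is approximable by points $u_T\in U_T$ with $u_T\to\hat u$ (this is exactly what membership in the liminf set $U$ provides), so continuity of $\mathbb{Z}$ gives $\sup_{U_T(R)}\mathbb{Z}_T(\cdot,\hat\tau_T)\ge\mathbb{Z}(\hat u)-o_P(1)$. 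For the upper bound, $[\textbf{A3}]$ forces every limit point of $\hat u_T$ to lie in $U$, and such a point carries $\mathbb{Z}$-value at least $\mathbb{Z}(\hat u)$; the uniqueness $[\textbf{A4}]$ then identifies it as $\hat u$. Reassembling the subsequence argument delivers $\hat u_T\overset{d_s(\calg)}{\to}\hat u$.

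I expect the main obstacle to be precisely this coupling of the Kuratowski convergence of the deterministic domains $U_T\to U$ with the stable convergence of the random fields: the lower bound requires approximating the \emph{random} maximizer $\hat u$ by points of the deterministic sets $U_T$ in a manner compatible with the $\calg$-stable representation, while the upper bound requires that no limit point of $\hat u_T$ escapes $U$ yet still attains the correct field value. For the final assertion under $[\textbf{A5}]$, I observe that $\hat v_T$ maximizes $\mathbb{V}_T$ over the \emph{fixed} domain $U$ and is tight, so the classical fixed-domain version of the same argmax machinery gives $\hat v_T\overset{d_s(\calg)}{\to}\hat u$. Crucially, in a single Skorokhod representation both $\hat u_T$ (argmax of $\mathbb{Z}_T(\cdot,\hat\tau_T)\approx\mathbb{V}_T$ over $U_T$) and $\hat v_T$ (argmax of $\mathbb{V}_T$ over $U$) converge pathwise to the \emph{same} unique maximizer $\hat u$, so $(\hat u_T,\hat v_T)\overset{d_s(\calg)}{\to}(\hat u,\hat u)$. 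Convergence to the diagonal forces $\hat u_T-\hat v_T\overset{d_s(\calg)}{\to}0$, hence $\hat u_T-\hat v_T=o_P(1)$.
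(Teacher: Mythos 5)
Your overall skeleton (localization by [{\bf A1}], substitution of $\hat\tau_T$ and field approximation by [{\bf A2}], domain convergence via the definition of $U$ and [{\bf A3}], identification by the uniqueness in [{\bf A4}]) matches the paper's, but the execution is genuinely different: the paper never extracts subsequences or invokes any Skorokhod-type representation for the first claim. Instead, it fixes a bounded nonnegative $\calg$-measurable $Y$ with $E[Y]>0$, works under the tilted measure $P_Y[A]=E[1_AY]/E[Y]$, and proves the portmanteau bound $\varliminf_T P_Y[\hat u_T\in O]\geq P_Y[\hat u\in O]$ for every open $O$ by a chain of sup-comparison inequalities, using a purely deterministic set-approximation lemma (Lemma \ref{lem1}(ii),(iii)) to pass from suprema over $U_T(R)\cap O$ and $U_T(R)\cap O^c$ to suprema over $U(R/2)\cap O$ and $U(R)\cap O^c$ up to $\delta$-enlargements. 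Because the tilting variable $Y$ is $\calg$-measurable, stable convergence falls out automatically from this scheme.

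This is exactly where your proposal has a concrete gap: a Skorokhod representation of the weak limit of $(\bbV_T,\hat u_T,\hat v_T)$ lives on a new probability space, so the pathwise argmax argument it enables can only yield weak convergence $\hat u_T\overset{d}{\to}\hat u$; the assertion $\hat u_T\overset{d_s(\calg)}{\to}\hat u$ requires testing against bounded $\calg$-measurable variables on the original space, and that structure is destroyed by the representation. You name this "compatibility with the $\calg$-stable representation" as the main obstacle but never resolve it. The fix is the paper's device (or equivalently, running your entire subsequence/Skorokhod argument under each conditional law $P[\,\cdot\,|A]$, $A\in\calg$, $P[A]>0$: conditions [{\bf A1}]--[{\bf A4}] all survive this change of measure, and weak convergence under every such conditional law is precisely stable convergence). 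A second, smaller point: in the coupled limit you apply [{\bf A4}] to the limit copy $\bbV_\infty$, which is only equal in law to $\bbZ$; you need that a.s. uniqueness of the maximizer and the joint law of the pair (field, maximizer) transfer through equality in law, which is the content of the paper's Lemma \ref{lem3} and requires proof. Granting these two repairs, your diagonal argument for the [{\bf A5}] claim --- couple $\hat u_T$ and $\hat v_T$ in one representation, show both converge pathwise to the same unique maximizer, and conclude $\hat u_T-\hat v_T=o_P(1)$ from convergence in law to a constant --- is valid, and is in fact a cleaner route than the paper's, which first proves $(\bbV_T,\hat v_T)\overset{d}{\to}(\bbZ,\hat u)$ and then reruns the sup-comparison inequalities against a random $\eta$-ball around $\hat v_T$ using bump functions $\phi_{\hat v_T}$.
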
 The proof of Theorem \ref{thm1} is given in Section 5.



\begin{rem}\label{remark1}
	{\rm
		\bd
		\im(i) If $\calt$ is closed, [{\bf A1}] implies  the following condition:
	\bd
	\im[${\bf[A1]}^\flat$]
	$\ds \varlimsup_{R \rightarrow \infty} \varlimsup_{T\rightarrow \infty}
	P\bigg[\exists\tau\in\calt~~s.t.~\sup_{U_T,  |u| \geq R} \mathbb{Z}_T(u, \tau)  = \sup_{U_T} \mathbb{Z}_T(u, \tau)\bigg] \yeq0$.
	\ed
Moreover, ${\bf[A1]}^\flat$ implies  the following condition:
	\bd
	\im[${\bf[A1]}^{\flat\flat}$]
	$\ds \varlimsup_{R \rightarrow \infty} \varlimsup_{T\rightarrow \infty}
	P\bigg[a_T^{-1}(\hat\theta_T-\theta^*)\geq R \bigg]=0$.
	\ed
	Theorem \ref{thm1} holds even if we substitute [{\bf A1}]  with  ${\bf[A1]}^{\flat\flat}$. (See the proof.)  However, the condition ${\bf[A1]}^{\flat\flat}$ depends on $\hat\tau_T$ and how $\hat\theta_T$ is taken.
	
	\im(ii) {From Measurable Selection Theorem,} if $[{\bf A1}]$ holds and there exists a neighborhood $\mathcal{N}$ of $\theta^*$ 
	in $\mathbb{R}^\sfp$  such that $\mathcal{N} \cap \Theta$ is closed in $\mathbb{R}^\sfp$, then   we can take $\hat\theta_T$  which satisfies the assumption that  $\hat\theta_T$ asymptotically maximizes $\bbH_T(\cdot, \hat\tau_T)$ on $\Theta$.
	
	\im(iii)   In [{\bf A3}], the reverse inclusion always holds. In fact,
	\beas
U =\bigcap_{\delta >0} \bigcup_{N=1}^{\infty} \bigcap_{T\geq N} 	{U_T}^{\delta} \subset
\bigcap_{\delta>0}  \bigcap_{N=1}^\infty \bigcup_{T\geq N}	{U_T}^{\delta} =
 \bigcap_{\delta>0}  \bigcap_{N=1}^\infty \bigg(\bigcup_{T\geq N}	{U_T}\bigg)^\delta = \bigcap_{N=1}^{\infty} \overline{\bigcup_{T\geq N} U_T}. 
\eeas

\ed}
\end{rem}

\subsection{Sufficient conditions for [{\bf A3}] and explicit expression for $U$}
Let $F \subset \bbR^\sfp$ be a non-empty closed subset. Let $f : F \rightarrow \bbR^{\sf{n}}$  be a continuous map, and let $g_T : F \rightarrow \bbR^{\sf{n}}$ be a continuous map depending on $T \in \bbT$.
We show that if the sets $U_T$ defined  by the  equation (\ref {U_T}) can be explicitly expressed using $f$ and $g_{T}$ satisfying suitable  assumptions, then the set $U$ defined by the equation (\ref{U}) can also be explicitly expressed, and furthermore the condition [{\bf A3}] holds. More precisely, we consider the following assumptions. Note that in the following, for $v=(v_1, ..., v_{\sf{n}})\in \bbR^{\sf{n}}$, $v \geq 0$ and $v>0$ mean that  $v_i \geq 0$ $(i=1, .., \sf{n})$ and $v_i>0$ $(i=1, ..., \sf{n})$, respectively. 
\bd
\im[{$\bf [U1]$}] For any $R>0$, there exists some $T_0=T_0(R) \in \bbT$ such that for any $T \in \bbT$ with $T\geq T_0$, 
\beas
U_T\cap B_R\yeq \{u \in F ; f(u)+g_T(u) \geq 0\}\cap B_R.
\eeas
\ed
Define $\calk_+$ and $\calk_c$ as two partitions of $\{1, ..., \sf{n}\}$. Let $f^+$, $f^c$, $g^+_{T}$ and $g^c_{T}$  denote $(f_k)_{k \in \calk_+}$, $(f_c)_{k \in \calk_c}$, $(g_k)_{k \in \calk_+}$ and $(g_k)_{k \in \calk_c}$, respectively, where  for each $k=1, ..., \sf{n}$, $f_k$ and $g_{k, T}$ denote the $k$-th component of  $f$ and $g_T$, respectively. 
\bd
\im[{$\bf [U2]$}] For any $R>0$, 
\beas \sup_{u \in F \cap B_R}|g_T(u)|\rightarrow 0\qquad (T\rightarrow \infty).
\eeas
Also,  for any $R>0$, there exists some $T_1=T_1(R) \in \bbT$ such that for any $T \in \bbT$ with $T\geq T_1$, \beas
g^+_{T}(u) \geq 0 \qquad(u \in F \cap B_R).
\eeas
\im[{$\bf [U3]$}]
\beas \overline{\big\{u \in F ; f^{+}(u)\geq 0,~ f^{c}(u)>0\big\}}\supset \big\{u \in F ; f(u) \geq 0 \big\}.
\eeas
\ed
\begin{theorem}\label{thmU}
 Assume $[{\bf U1}]$-$[{\bf U3}]$. Then  the condition $[{\bf A3}]$ holds.  Moreover, 
	\beas U\yeq \{u \in F ; f(u)\geq 0\}.
	\eeas
	\end{theorem}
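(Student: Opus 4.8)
Write $V:=\{u\in F; f(u)\geq 0\}$ for the set claimed to equal $U$. The plan is a sandwich argument. Recall from Remark~\ref{remark1}(iii) that the inclusion $U\subset\bigcap_{N=1}^{\infty}\overline{\bigcup_{T\geq N}U_T}$ holds unconditionally. Hence it suffices to prove the two inclusions
\[
\bigcap_{N=1}^{\infty}\overline{\bigcup_{T\geq N}U_T}\ \subset\ V\ \subset\ U,
\]
since chaining them with Remark~\ref{remark1}(iii) forces $U=V=\bigcap_{N=1}^{\infty}\overline{\bigcup_{T\geq N}U_T}$: the first equality is the asserted explicit expression for $U$, and the second is exactly $[\mathbf{A3}]$.

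First I would prove $\bigcap_{N=1}^{\infty}\overline{\bigcup_{T\geq N}U_T}\subset V$, using only $[\mathbf{U1}]$ and $[\mathbf{U2}]$. Take $u_0$ in the left-hand set; choosing for each $N$ a point of $\bigcup_{T\geq N}U_T$ within distance $1/N$ of $u_0$ produces $u_j\in U_{T_j}$ with $T_j\to\infty$ and $u_j\to u_0$. Fix $R>|u_0|$; for $j$ large one has $|u_j|<R$ and $T_j\geq T_0(R)$, so $[\mathbf{U1}]$ gives $u_j\in F$ with $f(u_j)+g_{T_j}(u_j)\geq 0$. Since $F$ is closed, $u_0\in F$; since $\sup_{F\cap B_R}|g_{T_j}|\to 0$ by $[\mathbf{U2}]$ and $f$ is continuous, letting $j\to\infty$ componentwise in $f(u_j)\geq -g_{T_j}(u_j)$ yields $f(u_0)\geq 0$, i.e. $u_0\in V$.

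The hard part is the reverse inclusion $V\subset U$, which is where $[\mathbf{U3}]$ is essential. Fix $u_0\in V$; since $U_T^\delta$ increases in $\delta$, it suffices, for each $\delta\in(0,1]$, to find $N$ with $u_0\in U_T^{\delta}$ for all $T\geq N$. Put $R:=|u_0|+1$. The obstruction is that $f(u_0)\geq 0$ does not give $f(u_0)+g_T(u_0)\geq 0$, because the block $g^c_T$ may be negative. To bypass this I apply $[\mathbf{U3}]$ to select $u'\in F$ with $f^+(u')\geq 0$, $f^c(u')>0$ and $|u'-u_0|<\delta$; then $|u'|<R$ and $\eta:=\min_{k\in\calk_c}f_k(u')>0$. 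By $[\mathbf{U2}]$, for all large $T$ one has $g^+_T(u')\geq 0$ and $|g^c_T(u')|<\eta$, so that $f^+(u')+g^+_T(u')\geq 0$ and $f^c(u')+g^c_T(u')>0$, i.e. $f(u')+g_T(u')\geq 0$; together with $T\geq T_0(R)$, $[\mathbf{U1}]$ then places $u'\in U_T\cap B_R$. Therefore $\inf_{a\in U_T}|u_0-a|\leq|u_0-u'|<\delta$, i.e. $u_0\in U_T^{\delta}$, for every $T$ beyond a threshold $N=N(\delta)$; as $\delta$ was arbitrary, $u_0\in U$.

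The single genuine obstacle is thus the passage $V\subset U$ at boundary points: when $f_k(u_0)=0$ for some $k\in\calk_c$, feasibility of $u_0$ for $U_T$ can be destroyed by the vanishing but possibly negative perturbation $g^c_T$, and $[\mathbf{U3}]$ is precisely the density hypothesis that lets one retreat to a strictly feasible point $u'$ whose strict slack $f^c(u')>0$ absorbs $g^c_T$. The asymmetry of the two blocks is the point: the $\calk_+$-constraints are preserved automatically by the sign condition $g^+_T\geq 0$ of $[\mathbf{U2}]$, whereas the $\calk_c$-constraints require the strict approximation supplied by $[\mathbf{U3}]$. The only bookkeeping to watch is ordering the thresholds $T_0(R),T_1(R)$ from $[\mathbf{U1}]$--$[\mathbf{U2}]$ against the $\delta$-dependent $N$, and keeping the $g_T$-estimates uniform on $F\cap B_R$.
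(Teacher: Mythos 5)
Your proof is correct and takes essentially the same route as the paper: the identical sandwich $U \subset \bigcap_{N=1}^{\infty}\overline{\bigcup_{T\geq N}U_T} \subset \{u\in F;\, f(u)\geq 0\} \subset U$, with $[{\bf U1}]$--$[{\bf U2}]$ yielding the middle inclusion and $[{\bf U3}]$ supplying the strictly feasible approximants ($f^+\geq 0$, $f^c>0$) that absorb the perturbation $g_T$ in the final one. The only differences are presentational: you argue pointwise with sequences where the paper chains set inclusions over $B_R$ and $\epsilon$, and you inline the closedness of $U$ (the paper's Lemma \ref{lem1} (i)) by placing $u_0$ in $U_T^{\delta}$ directly rather than taking a closure.
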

\begin{proof}
	Assume $[{\bf U1}]$-$[{\bf U3}]$. 
	For any $R>0$, we have
	\beas U &=&\bigcap_{\delta >0} \bigcup_{N=1}^{\infty} \bigcap_{T\geq N}{U_T}^{\delta}\\
	&\supset& \bigcup_{N=1}^{\infty} \bigcap_{T\geq N}{U_T}\cap B_R\\
	&=& \bigcup_{N=1}^{\infty} \bigcap_{T\geq N}\big\{f(u)+g_T(u) \geq 0\big\}\cap B_R \qquad\big(\because [{\bf U1}]\big)\\
	&\supset& \bigcup_{\substack{\epsilon=(\epsilon_0 , ..., \epsilon_0)\in \bbR^{\sf{n}}\\ \epsilon_0 >0}}\bigcup_{N=1}^{\infty} \bigcap_{T\geq N}\big\{f^+(u)\geq 0, f^c(u) -\epsilon\geq 0\big\}\cap B_R \qquad\big(\because [{\bf U2}]\big)\\
	&=&\big\{f^+(u)\geq 0, f^c(u)> 0\big\}\cap B_R.
	\eeas 
	Thus, $U \supset \{f^+(u)\geq 0, f^c(u)> 0\}$. Since $U$ is closed from Lemma \ref{lem1} and since  [{\bf U3}] holds, we have
	\bea\label{202206011818}  U \supset \overline{\big\{f^+(u)\geq 0, f^c(u)> 0\big\}}\supset \{f(u) \geq 0\}.	\eea
	Similarly, for any $R>0$, \beas U\cap B_R&\subset& \bigcap_{N=1}^{\infty} \overline{\bigcup_{T\geq N} U_T}\cap B_R \\
	&\subset& \bigcap_{N=1}^{\infty} \overline{\bigcup_{T\geq N} U_T\cap B_{2R}} \\
	&=& \bigcap_{N=1}^{\infty} \overline{\bigcup_{T\geq N} \{f(u)+g_T(u)\geq 0\}\cap B_{2R}}  \qquad \big(\because [{\bf U1}]\big)\\
	&\subset& \bigcap_{\substack{\epsilon=(\epsilon_0 , ..., \epsilon_0)\in \bbR^{\sf{n}}\\ \epsilon_0 >0}}\bigcap_{N=1}^{\infty} \overline{\bigcup_{T\geq N} \{f(u)+\epsilon\geq 0\}\cap B_{2R}}  \qquad \big(\because [{\bf U2}]\big)\\
	&\subset& \{f(u)\geq 0\}.
	\eeas
	Thus, \bea\label{202206011819} U\subset \bigcap_{N=1}^{\infty} \overline{\bigcup_{T\geq N} U_T} \subset \{f(u)\geq 0\}.
 	\eea
 	From (\ref{202206011818}) and (\ref{202206011819}), [{\bf A3}] holds, and $U=\{f(u)\geq 0\}$.
	\end{proof}

 Several examples of explicit expressions of $U$ are given below.
\begin{example}\label{Exexplicit1}{\rm Take $\Theta$ as 
	\beas
	\Theta=\prod_{i=1}^\sfa [0, a_i]  \times \prod_{j=1}^{\sfb} [-b_j, 0]  \times \prod_{k=1}^{\sf{c}} [-c_k, d_k], \eeas
	where $a_i, b_j, c_k, d_k$ $(i=1, ..., \sfa, j=1, ..., \sfb, k=1, ..., \sf{c})$ are all positive numbers.
	Let $\sfp=\sfa+\sfb+\sf{c}$, and take $a_T \in GL(\sfp)$ as  diagonal matrices with $\|a_T\| \to 0$. Decompose  the true value $\theta^*$ of $\theta \in \Theta$ as 
	\beas
	\theta^* \yeq (\alpha_1^*, ..., \alpha_\sfa^*, \beta_1^*, ..., \beta_\sfb^*, \gamma^*_1, ..., \gamma^*_{\sf{c}}),
	\eeas 
	where $0 \leq \alpha^*_i < a_i, -b_j< \beta_j^* \leq 0, -c_k < \gamma_k^*<d_k$ $(i=1, ..., \sfa, j=1, ..., \sfb, k=1, ..., \sf{c})$. Then [{\bf A3}] holds, and
	\bea\label{Exexplicit1U}
	U &=&  \prod_{i=1}^\sfa A_i  \times \prod_{j=1}^{\sfb} B_j  \times \prod_{k=1}^{\sf{c}} \bbR,
	\eea
	where for each $i=1, ..., \sfa$ and each $j=1, ..., \sfb$,
	\beas
	A_i =\begin{cases} [0, \infty) & (\alpha_i^*=0) \\ \bbR &(\alpha_i >0)\end{cases}, \qquad B_j =\begin{cases} (-\infty, 0]& (\beta_j^*=0) \\ \bbR &(\beta_j <0)\end{cases}.
	\eeas
	In fact, from the  definition (\ref{U_T}) of $U_T$, for any $R>0$, there exists some $T_0$ such that 
	\beas
	U_T \cap B_R \yeq  \bigg( \prod_{i=1}^\sfa A_i  \times \prod_{j=1}^{\sfb} B_j  \times \prod_{k=1}^{\sf{c}} \bbR \bigg)\cap B_R\qquad(T \geq T_0).
	\eeas 
 Taking the set on the right-hand side of  (\ref{Exexplicit1U}) as $F$ and defining the  functions $f$ and $g_T$ on $F$ as $f=g_T=0$, [{\bf U1}]-[{\bf U3}] obviously hold. (In this case, $\calk_+ = \{1\}$ and $\calk_c= \phi$.)
		Therefore, from Theorem \ref{thmU}, (\ref{Exexplicit1U}) holds. 
}
	\end{example}

\begin{example}[{The space of positive semi-definite matrices}]\label{Expositivesemidefinite}{\rm {The parameter space treated in this example appears in Section \ref{Ito}.}
		Let $\sfm$ be a positive integer, and take $\sfp$ as $\sfp=\frac{\sfm(\sfm+1)}{2}$. Denote by $\cals^\sfm$ and $\cals_+^\sfm$ the set of  all $\sfm$-dimensional real symmetric matrices and the set of all  positive-semi-definite matrices included in $\cals^\sfm$, respectively.  Define a bijection $\psi :  \cals^\sfm  \to \bbR^{\sfp}$  as for any $A=(A_{ij})_{1\leq i,j \leq \sfm} \in \cals^\sfm$,
		\bea
		\psi(A)=\big(A_{11}, ..., A_{1\sfm}, A_{22}, ..., A_{2\sfm}, ..., A_{\sfm\sfm}\big). \label{psi}
		\eea
		Take $\Theta\subset{\bbR^\sfp}$ as 
		\beas
		\Theta \yeq \psi(\cala),
		\eeas  
		where $\cala$ is a compact subset of $\cals_+^{\sfm}$.  Take  the true value $\theta^*$  of $\theta \in \Theta$ as $\theta^*=\psi(A^*)$, where $A^* \in \cala$.
		Assume that  for some $\delta>0$, 
		\bea\label{A^*}
		\big\{A \in \cala ; \|A-A^*\|<\delta \big\}\yeq \big\{A \in \cals_+^{\sf{m}} ;  \|A-A^*\|<\delta \big\}.
		\eea
		  Define $a_T \in GL(\sfp)$ as $a_T=T^{-\frac{1}{2}} I_\sfp$.  We   denote   ${\rm rank}(A^*)$ by $\sfr^*$.
		Then [{\bf A3}] holds, and
		\bea\label{ExpositivesemidefiU}
		U \yeq \psi\bigg(\big\{ w \in \cals^{\sfm} ; K^\prime w K \in \cals_+^{\sfm-\sfr^*}\big\}\bigg), 
		\eea
		where $K $ is a $\sfm \times (\sfm-\sfr^*)$  matrix whose column vectors form a basis for ${\rm Ker}(A^*)$. \big(If $\sfr^*=\sfm$, then  consider $U$ as $\psi(S^\sfm)$.\big)
		
		We show (\ref{ExpositivesemidefiU}) when $\sfr^* < \sfm$. (If $\sfr^*=\sfm$, then $A^* \in {\rm Int}(\cala)$, and (\ref{ExpositivesemidefiU}) obviously holds.) Take an arbitrary $R>0$. From (\ref{A^*}) and  the  definition (\ref{U_T}) of $U_T$,  for sufficiently large $T \in \bbT$,
		\beas
		U_T\cap B_{2R} &=& \psi\bigg(\big\{ w \in \cals^{\sfm} ; A^*+T^{-\frac{1}{2}} w \in \cals^\sfm_+\big\}\bigg)\cap B_{2R}.
		\eeas
		Define $W$ as 
		\beas
		W \yeq \big\{ w \in \cals^{\sfm} ; K^\prime w K \in \cals^{\sfm-\sfr^*}_+\big\}.
		\eeas
		Then we obviously have $ U_T \cap B_{2R} \subset \psi(W)$ for such large $T \in \bbT$. Therefore, $U \cap B_R \subset \bigcap_{N=1}^{\infty} \overline{\bigcup_{T\geq N} U_T}\cap B_R \subset \bigcap_{N=1}^{\infty} \overline{\bigcup_{T\geq N} U_T \cap B_{2R}}\subset \psi(W)$. This implies $U \subset \psi(W)$. Thus, for [{\bf A3}] and (\ref{ExpositivesemidefiU}), it suffices to show that
		\bea\label{Expositivesemithedesiredresult}
		\psi(W) \subset  U. 
		\eea
		Take any $w \in W$. In order to derive (\ref{Expositivesemithedesiredresult}),  we show that for sufficiently large $T \in \bbT$,
		\bea\label{Expsitivesemilem}
		\psi(w+T^{-\frac{1}{4}}I_{\sfm}) \in U_T,
		\eea
		where $I_\sfm$ denotes the $\sfm$-dimensional identity matrix. Take any $x \in \bbR^\sfm$. Decompose $\bbR^\sfm$ as $\bbR^\sfm={\rm Ker}(A^*)^{\perp} \oplus{\rm Ker}(A^*)$. We decompose $x$ as $x=y+Kz$ for some $y \in {\rm Ker}(A^*)^{\perp}$ and some $z \in \bbR^{\sfm-\sfr^*}$. Then since $w \in W$, for sufficiently large $T \in \bbT$, we have
		\beas
		x^{\prime}\big(A^*+T^{-\frac{1}{2}}(w+T^{-\frac{1}{4}}I_\sfm)\big)x &\geq& y^\prime \big(A^*+T^{-\frac{1}{2}}(w+T^{-\frac{1}{4}}I_\sfm)\big)y\\
		&& -2\|w+T^{-\frac{1}{4}}I_\sfm\|T^{-\frac{1}{2}}|y||Kz|+T^{-\frac{3}{4}}|Kz|^2 \\
		&\geq& \frac{\epsilon}{2}|y|^2-2\|w+T^{-\frac{1}{4}}I_\sfm\|T^{-\frac{1}{2}}|Kz||y|+T^{-\frac{3}{4}}|Kz|^2 \\
		&\geq& -\frac{\|w+T^{-\frac{1}{4}}I_\sfm\|^2}{\epsilon/2}T^{-1}|Kz|^2+T^{-\frac{3}{4}}|Kz|^2,
		\eeas 
		where $\epsilon$ is a positive constant  depending only on $A^*$. Thus, for sufficiently large $T \in \bbT$, 
		\beas
		A^*+T^{-\frac{1}{2}}(w+T^{-\frac{1}{4}}I_\sfm) \in \cals_+^\sfm,\eeas
		 which implies (\ref{Expsitivesemilem}).  Since $\psi(w+T^{-\frac{1}{4}}I_\sfm)$ converges to $\psi(w)$ as $T \to \infty$, we have $\psi(w) \in U$. Therefore, (\ref{Expositivesemithedesiredresult})  holds.

	}
\end{example}

\begin{example}[Non-conical $U$]\label{exU3}
	{\rm  This example derives from Section \ref{linearmixedmodel}.
		Take $\Theta$ as $\Theta = \psi({\cald})$, where $\psi$ is defined as (\ref{psi})  when $\sfm =2$, and ${\cald} \subset \cals_+^{2}$ is a compact subset.   Let $D^*=(D^*_{ij})_{1\leq i, j \leq 2}$ be the true value of $D=(D_{ij})_{1\leq i, j \leq 2} \in {\cald}$.  Suppose that for some $\delta>0$, \beas
		\big\{D \in  {\cald}  ; \|D-D^*\|< \delta \big \} \yeq \big\{D \in  {\cals_+^{2}}  ; \|D-D^*\|< \delta \big \}.
		\eeas
		Consider a case where
		\beas
		D_{11}^*>0, ~~D_{12}^*=D_{22}^*=0.
		\eeas
	Unlike Example \ref{Expositivesemidefinite}, take $a_T$ as
	$a_T = {\rm diag}(T^{-\frac{1}{2}}, T^{-\frac{1}{2}}, T^{-\frac{\rho}{2}})$,\footnote{For any $a_1, ..., a_n \in \bbR$, ${\rm diag}(a_1, ..., a_n)$  denotes an $n \times n$  diagonal matrix whose $(i, i)$ entry is $a_i$ for every $i=1, ..., n$.} where  $\rho$ is a positive number defined as  $\rho = \frac{r}{q}\vee 1$, and $0<q \leq 1$ and $0\leq r \leq 1$ are  tuning parameters.
Then [{\bf A3}] holds  and $U=W$, where $W$ is defined as
\bea\label{ex3U}
W \yeq\begin{cases}  \big\{(w_1, w_2, w_3) \in \bbR^3 ; w_3 \geq 0 \big\} & (\rho <2 \text{ \,i.e. } q>\frac{r}{2})\\
	\big\{(w_1, w_2, w_3) \in \bbR^3 ; D_{11}^*w_3 - w_2^2 \geq 0\big\} & (\rho=2 \text{ \,i.e. }q=\frac{r}{2})\\
	\big\{(w_1, w_2, w_3) \in \bbR^3 ; w_3 \geq 0, w_2= 0\big\} & (\rho >2 \text{ \,i.e. }q<\frac{r}{2})
	\end{cases}.
\eea 

In fact,  from the  definition (\ref{U_T}) of $U_T$, for any $R>0$ and for sufficiently large $T \in \bbT$,
\beas
U_T \cap B_R &=& \big\{(w_1, w_2, w_3) \in \bbR^3 ; \\
&&D_{11}^* +T^{-\frac{1}{2}}w_1 \geq 0, ~w_3 \geq 0,~ (D_{11}^*+T^{-\frac{1}{2}}w_1)T^{-\frac{\rho}{2}}w_3 - T^{-1}w_2^2 \geq 0\big\} \cap B_R\\
&=& \big\{(w_1, w_2, w_3) \in \bbR^2 \times [0, \infty) \,;\, (D_{11}^*+T^{-\frac{1}{2}}w_1)T^{-\frac{\rho}{2}}w_3 - T^{-1}w_2^2 \geq 0\big\} \cap B_R.
\eeas
Define $f, g_T : \bbR^2 \times [0, \infty) \to \bbR$  as
 for any $w=(w_1, w_2, w_3) \in \bbR^2 \times [0, \infty)$,
\beas
f(w) \yeq  \begin{cases}  D_{11}^*w_3  & (q>\frac{r}{2})\\
	 D_{11}^*w_3 - w_2^2& (q=\frac{r}{2})\\
 -w_2^2& (q<\frac{r}{2})
\end{cases},  ~~g_T(w) \yeq  \begin{cases}   T^{-\frac{1}{2}}w_1w_3 - T^{-1+\frac{\rho}{2}}w_2^2& (q>\frac{r}{2})\\
T^{-\frac{1}{2}}w_1w_3& (q=\frac{r}{2})\\
(D_{11}^*+T^{-\frac{1}{2}}w_1)T^{-\frac{\rho}{2}+1}w_3 & (q<\frac{r}{2})
\end{cases}, 
\eeas respectively. Then [{\bf U1}]-[{\bf U3}]  holds for 
\beas
\calk_+ \yeq  \begin{cases}  \phi  & (q \geq \frac{r}{2})\\
	\{1\}& (q<\frac{r}{2})
	\end{cases},
\eeas and we obtain $U=\{w \in \bbR^2 \times [0, \infty) ; f(w) \geq 0\}$. Thus, (\ref{ex3U}) holds.
}

\end{example}

\subsection{Relationship to previous studies}
Recall that $U_T$ is defined by  (\ref{U_T}).
We show that the approach of using the limit set $U$ defined by   (\ref{U}) is a generalization of the prior work on the case where the true value is on the boundary. 
Chernoff \cite{chernoff1954distribution} considers a local approximation of the set $\Theta-\theta^*$ by a cone\footnote{We call a non-empty set $\Lambda$ a cone (with its vertex at the origin) if for any $\lambda\in \Lambda$ and any $t \geq 0$, $t\lambda \in \Lambda$.}, and  Andrews \cite{andrews1999estimation} extends this method. The definition of the local approximation by a cone  is as follows.
\begin{definition}
	{\rm We say that  a sequence of sets $\{\Phi_T \subset \bbR^{\sf{p}} ; T \in \bbT\}$ is locally  approximated (at the origin) by a cone $\Lambda \subset \bbR^{\sf{p}}$ if
		\beas
		&&d(\phi_T, \Lambda)=o(|\phi_T|) \qquad\forall\{\phi_T\in\Phi_T ; T\in \bbT\} {\rm ~such~ that~}  |\phi_T|\rightarrow 0,\\
		&&d(\lambda_T, \Phi_T)=o(|\lambda_T|) \qquad\forall\{\lambda_T\in\Lambda ; T\in \bbT\} {\rm ~such~that~} |\lambda_T|\rightarrow 0,\\
		\eeas where  for $A\subset \bbR^\sfp$ and for $x \in \bbR^\sfp$, $d(x, A)$ denotes $\inf_{a \in A}d(x, a)=\inf_{a \in A}|x-a|$.} \end{definition}
	This definition is proposed by Andrews \cite{andrews1999estimation}, and if $\Phi_T$ is simply equal to $\Theta-\theta^*$, then this definition is consistent with that  put forward by Chernoff \cite{chernoff1954distribution}.
Using this concept,  we consider the following   conditions  proposed by them.
\bd
\im[${\bf [An]}$] For some sequence of positive scalar constants $\{b_T \}_{T\in\bbT}$ with $b_T \rightarrow \infty$, $\{U_T/b_T ; T\in \bbT\}$ is locally approximated by some cone $\Lambda$.

\im[${\bf [Ch1]}$] $\bbT=\bbZ_{\geq1}$, where $\bbZ_{\geq 1}$ denotes the set of all positive integers. Moreover, $a_T$ is equal to the diagonal matrix $T^{-\frac{1}{2}}I_\sfp$.  

\im[${\bf [Ch2]}$]  $\{\Theta-\theta^* ; T\in\bbT\}$ is locally approximated by some cone $\Lambda$.
\ed
Note that if [{\bf Ch1}] and [{\bf Ch2}] holds, then [{\bf An}] also holds with $b_T =T^{\frac{1}{2}}$. The following theorem  says that [{\bf A3}] is a generalization of these conditions. 
\begin{theorem}\label{thmrelation}
	\bd \im[(i)] If $[{\bf An}]$ holds, then $[{\bf A3}]$ holds, and 
	\beas U=\overline{\Lambda}.
	\eeas
	\im[(ii)] Assume $[{\bf Ch1}]$.  Then $[{\bf A3}]$ holds if and only if $[{\bf Ch2}]$ holds. Furthermore, in this case, 
	\beas U=\overline{\Lambda}.
	\eeas
	\ed
	\end{theorem}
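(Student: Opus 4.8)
The plan is to reduce the entire statement to elementary computations with the set–distance $d(\cdot,\cdot)$. The key first step is to unwind the definition (\ref{U}): since $x\in U_T^\delta$ means exactly $d(x,U_T)<\delta$, one reads off the characterization
\[
x\in U\iff \lim_{T\to\infty}d(x,U_T)=0 ,
\]
and, writing $U^*:=\bigcap_{N}\overline{\bigcup_{T\ge N}U_T}$, likewise $x\in U^*\iff\varliminf_{T\to\infty}d(x,U_T)=0$. By Remark \ref{remark1}(iii) one always has $U\subset U^*$, so [{\bf A3}] is precisely the equality $U=U^*$. Throughout I will use the scaling identity $d(x,cA)=c\,d(c^{-1}x,A)$ for $c>0$ and the scale-invariance $c\Lambda=\Lambda$ of a cone.

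For part (i), assuming [{\bf An}] with scalars $b_T\to\infty$ and cone $\Lambda$, I put $V_T:=U_T/b_T$, so that $d(x,U_T)=b_T\,d(x/b_T,V_T)$. To prove $\overline{\Lambda}\subset U$ (which suffices, $U$ being closed by Lemma \ref{lem1}) I fix $\lambda\in\Lambda$ and apply the second local-approximation estimate to $\lambda_T:=\lambda/b_T\in\Lambda$ (note $|\lambda_T|\to0$): this gives $d(\lambda/b_T,V_T)=o(|\lambda|/b_T)$, whence $d(\lambda,U_T)=o(|\lambda|)\to0$ and $\lambda\in U$. For $U\subset\overline{\Lambda}$ I fix $x\in U$, so $d(x/b_T,V_T)=o(1/b_T)$, choose $\phi_T\in V_T$ with $b_T|x/b_T-\phi_T|\to0$ (then $|\phi_T|\to0$ and $b_T\phi_T\to x$), apply the first estimate to get $d(\phi_T,\Lambda)=o(|\phi_T|)$, and multiply by $b_T$: since $b_T|\phi_T|$ is bounded (by $|x|$) and $c\Lambda=\Lambda$, this yields $d(b_T\phi_T,\Lambda)\to0$, so $d(x,\Lambda)=0$.

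For part (ii), the implication [{\bf Ch2}]$\Rightarrow$[{\bf A3}] (with $U=\overline{\Lambda}$) is immediate from part (i), because under [{\bf Ch1}] we have $U_T=T^{1/2}(\Theta-\theta^*)$, so [{\bf Ch1}]+[{\bf Ch2}] already give [{\bf An}] with $b_T=T^{1/2}$. For the converse I assume [{\bf Ch1}] and $U=U^*$, write $\Phi:=\Theta-\theta^*$ and $g_x(s):=d(x,s\Phi)$, so $d(x,U_T)=g_x(\sqrt T)$. First I upgrade the lattice limit to a continuous one: a one-line modulus estimate (taking a near-optimal $\phi$ and using $|s-\sqrt T|\le(2\sqrt T)^{-1}$ for $\sqrt T\le s<\sqrt{T+1}$) gives $g_x(s)\le g_x(\sqrt T)+(|x|+g_x(\sqrt T))/(2T)$, so $x\in U$ is equivalent to $g_x(s)\to0$ as $s\to\infty$ through the reals; since then $d(tx,s\Phi)=t\,g_x(s/t)\to0$ for $t>0$, the set $U$ is a closed cone. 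I then verify that the constant sequence $\{\Phi\}$ is locally approximated by $\Lambda:=U$ (which gives [{\bf Ch2}] and $U=\overline{\Lambda}$). By the cone identity the second estimate reduces to $\sup_{u\in U,\,|u|=1}g_u(s)\to0$; were this false there would be unit $u_n\in U$ and $s_n\to\infty$ with $g_{u_n}(s_n)\ge\varepsilon_0$, and passing to $u_n\to u^*\in U$ (the unit sphere of the closed set $U$ is compact) and using that $g$ is $1$-Lipschitz in its first argument uniformly in $s$, one forces $g_{u^*}(s_n)\ge\varepsilon_0/2$, contradicting $g_{u^*}(s)\to0$. Likewise the first estimate reduces to $d(v_n,U)\to0$ for the unit directions $v_n:=\phi_n/|\phi_n|$ of points $\phi_n\in\Phi$ with $\phi_n\to0$; if it failed, a subsequential limit $v_n\to v^*$ would satisfy $d(v^*,U)\ge\varepsilon_0$, yet choosing integers $T_n$ nearest to $|\phi_n|^{-2}$ makes $\sqrt{T_n}\phi_n=(\sqrt{T_n}|\phi_n|)v_n\to v^*$ with $\sqrt{T_n}\phi_n\in U_{T_n}$, whence $v^*\in U^*=U$, a contradiction.

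I expect the main obstacle to be exactly this converse in part (ii), and it is where [{\bf Ch1}] is genuinely used twice. First, passing from the discrete scales $\{\sqrt T\}$ to all real scales is needed both to make $U$ a cone and to supply the pointwise limits $g_{u^*}(s)\to0$ at the arbitrary real arguments $s_n=1/|\lambda_n|$ that the $U$-side of the local approximation forces on us. Second, realizing a prescribed unit direction $v^*$ as a limit of lattice points $\sqrt{T_n}\phi_n\in U_{T_n}$ relies on the freedom to tune $T_n\sim|\phi_n|^{-2}$, available precisely because $\bbT=\bbZ_{\geq1}$ with $a_T=T^{-1/2}I_\sfp$. Both one-sided estimates are then closed by the same compactness argument on the (compact) unit sphere of $U$; the only care needed is to phrase each as ruling out a uniformity failure rather than arguing pointwise.
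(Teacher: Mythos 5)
Your reduction of [{\bf A3}] to the equality $U=U^*$, where $U^*:=\bigcap_{N}\overline{\bigcup_{T\ge N}U_T}$ and $U\subset U^*$ is automatic, is correct — but your part (i) never proves that equality. You show $\overline{\Lambda}\subset U$ and $U\subset\overline{\Lambda}$, i.e.\ only the identification $U=\overline{\Lambda}$; the inclusion $U^*\subset U$, which is the actual content of [{\bf A3}], is nowhere established. The trouble is that your argument for $U\subset\overline{\Lambda}$ starts from ``fix $x\in U$, so $d(x,U_T)\to 0$,'' i.e.\ from the full-limit characterization of $U$; an element $x\in U^*$ only satisfies $\varliminf_{T}d(x,U_T)=0$, so the chain $\phi_T\in V_T$, $b_T\phi_T\to x$, $d(b_T\phi_T,\Lambda)\to 0$ is not available for such $x$ as written. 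The paper instead proves $U^*\subset\overline{\Lambda}$ directly: take $u\in U^*$, extract a subsequence $u_{T_n}\in U_{T_n}$ with $u_{T_n}\to u$, and run exactly your computation along that subsequence (the estimates in [{\bf An}] do apply to subsequences, for instance because $0\in U_T/b_T$ lets one pad the subsequence to a full sequence tending to $0$). Combined with $\overline{\Lambda}\subset U$ this yields $U^*\subset\overline{\Lambda}\subset U$, giving [{\bf A3}] and $U=\overline{\Lambda}$ simultaneously. This is a genuine gap rather than a cosmetic one, because the forward implication of your part (ii) cites part (i) precisely for [{\bf A3}]; it is, however, repaired by the one change just described.

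Your converse in part (ii) is correct and takes a route genuinely different from the paper's. You obtain the cone property of $U$ from [{\bf Ch1}] alone, via the lattice-to-real-scale upgrade of $s\mapsto d(x,s(\Theta-\theta^*))$, whereas the paper deduces it from [{\bf A3}] by the bracketing argument $(T_n-1)^{1/2}\le kn^{1/2}\le T_n^{1/2}$; and you verify the two approximation estimates by compactness of unit spheres together with the $1$-Lipschitz property of $x\mapsto d(x,A)$, invoking $U^*=U$ only for the first estimate. The paper instead bounds both ratios by suprema of distance functions over $U_{T_n}\cap\overline{B_2}$ or $U\cap\overline{B_2}$ and closes them with the uniform inclusions of Lemma \ref{lem1} (ii) and (iii). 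Both arguments are sound; yours is somewhat more self-contained, while the paper's reuses machinery (Lemma \ref{lem1}) that is needed for Theorem \ref{thm1} anyway.
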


\begin{remark}
	{\rm \bd \im[(i)] 
		Strictly speaking, Andrews \cite{andrews1999estimation} considers [{\bf An}] only when  $\bbT \subset(0, \infty)$ and $b_T \leq c \lambda_{\rm min}[a_T^{-1}]$\footnote{ $\lambda_{\rm min}[A]$ denotes the minimum eigenvalue of a  matrix $A$.}  for some $0<c<\infty$.
	\im[(ii)] Since $U$ can be a set other than a cone such as Example \ref{exU3}, [{\bf A3}] is strictly weaker than [{\bf An}] and $[{\bf Ch1}]+[{\bf Ch2}]$.
\ed }
\end{remark}

\begin{proof}[\underline{Proof of Theorem \ref{thmrelation}}]
	~
	
	\vspace{2mm}
\noindent {\bf (i)} Assume [{\bf An}]. First, we show $\overline{\Lambda}\subset U$. Take an arbitrary $\lambda\in\Lambda$. Then $\lambda/b_T \in\Lambda$ for any $T \in \bbT$. Since $\{U_T/b_T ; T\in \bbT\}$ is locally approximated by a cone $\Lambda$, we have
\beas
d(U_T/b_T, \lambda/b_T)=o(1/b_T).
\eeas
Therefore, there exists a sequence $\{u_T \in U_T ; T\in\bbT \}$ such that
\beas
d(u_T, \lambda)=o(1).
\eeas
By the definition of $U$, this implies $\lambda \in U$. Thus  $\Lambda \subset U$, and since  $U$ is closed from Lemma \ref{lem1}, $\overline{\Lambda}\subset U$.

Therefore, it is sufficient to show that
\bea\label{202206021300} \big(U\subset \big)\bigcap_{N=1}^{\infty} \overline{\bigcup_{T\geq N} U_T}\subset \overline{\Lambda}
\eea
Take an arbitrary $u \in \bigcap_{N=1}^{\infty} \overline{\bigcup_{T\geq N} U_T}$. Then there exist a subsequence $\{T_n\}_{n=1}^\infty$ of $\bbT$ and a sequence $\{u_{T_n} \in U_{T_n} ; n=1, 2, ...\}$ such that 
\bea\label{202206021306} u_{T_n} \rightarrow u.
\eea
Since $\{U_T/b_T ; T\in \bbT\}$ is locally approximated by a cone $\Lambda$, we have
\beas
 d(u_{T_n}/b_{T_n}, \Lambda)=o(u_{T_n}/b_{T_n})=o(1/b_{T_n}).
\eeas
Therefore, there exists a sequence $\{\lambda_{T_n} \in \Lambda ; n=1, 2, ... \}$ such that
\beas
d(u_{T_n}, b_{T_n}\lambda_{T_n})=o(1).
\eeas
From (\ref{202206021306}), $b_{T_n}\lambda_{T_n}\rightarrow u$, which implies $u \in \overline{\Lambda}$. Thus, (\ref{202206021300}) holds.

\vspace{2mm}
\noindent {\bf (ii)}  Assume [{\bf Ch1}]. Since [{\bf An}] implies [{\bf A3}], the assumption [{\bf Ch2}] also implies [{\bf A3}], and $U=\overline\Lambda$.  

We show the converse. Assume $[{\bf Ch1}]$ and $[{\bf A3}]$. We first show $U$ becomes a cone. Take an arbitrary $u\in U$ and an arbitrary $k>0$. From the definition of $U$, there exists some sequence $\{\theta_n \in \Theta ; n=1, 2, ...\} $ such that
\beas n^{\frac{1}{2}}(\theta_n-\theta^*)\rightarrow u.
\eeas
Take a subsequence $\{T_n\}_{n=1}^{\infty}$ of $\bbZ_{\geq 1}$ satisfying that $T_n \uparrow \infty$ and 
\beas
(T_n-1)^{\frac{1}{2}} \leq kn^{\frac{1}{2}} \leq T_n^{\frac{1}{2}} \qquad(n=1, 2, ...).
\eeas
Then, obviously, $kn^{\frac{1}{2}}/T_n^{\frac{1}{2}}\rightarrow 1$. Therefore,
\beas
T_n^{\frac{1}{2}}(\theta_n -\theta^*)=\big(T_n^{\frac{1}{2}}k^{-1}n^{-\frac{1}{2}}\big)kn^{\frac{1}{2}}(\theta_n -\theta^*)\rightarrow ku.
\eeas
Since $T_n^{\frac{1}{2}}(\theta_n -\theta^*) \in U_{T_n}$ for any $n\in \bbZ_{\geq1}$, $ku \in \bigcap_{N=1}^{\infty} \overline{\bigcup_{T\geq N} U_T}$. From [{\bf A3}], $ku \in U$. Since $U$ obviously contains zero, $U$ is a cone.

 Next, we show that $\{\Theta-\theta^* ; n \in \bbZ_{\geq 1}\}$ is locally approximated by $U$.
That is, we show \bea
&&d(\phi_n, U)=o(|\phi_n|) \qquad\forall\{\phi_n\in\Theta-\theta^* ; n\in \bbZ_{\geq 1}\} {\rm ~such~ that~}  |\phi_n|\rightarrow 0\qquad and\label{6160155}\\
&&d(u_n, \Theta-\theta^*)=o(|u_n|) \qquad\forall\{u_n\in U ; n\in \bbZ_{\geq 1}\} {\rm ~such~that~} |u_n|\rightarrow 0 \label{6160156}.
\eea
We first show (\ref{6160155}).
 Take an arbitrary sequence $\{\phi_n\in\Theta-\theta^* ; n\in \bbZ_{\geq 1}\}$ with $|\phi_n| \rightarrow 0$. 
We may assume that $|\phi_n| \neq 0$ for any $n\in\bbZ_{\geq 1}$.
Take a subsequence $\{T_n\}_{n=1}^{\infty}$ of $\bbZ_{\geq 1}$ satisfying that $T_n \rightarrow \infty$ and 
\beas
(T_n-1)^{\frac{1}{2}} \leq |\phi_n|^{-1} \leq T_n^{\frac{1}{2}} \qquad(n=1, 2, ...).
\eeas
Then, obviously, $T_n^{\frac{1}{2}}|\phi_n|\rightarrow 1$. Therefore, we may assume that $T_n^{\frac{1}{2}}|\phi_n|\leq 2$. Then
\beas
|\phi_n|^{-1}d(\phi_n, U)
\yleq T_n^{\frac{1}{2}}d(\phi_n, U)&=&d(T_n^{\frac{1}{2}}\phi_n, T_n^{\frac{1}{2}}U)\\
&=&d(T_n^{\frac{1}{2}}\phi_n, U) \qquad\big(\because U \text{ is a cone.}\big)\\
&\leq& \sup_{u\in U_{T_n}\cap \overline{B_2}}d(u, U).
\eeas
Take any  $\delta>0$. From Lemma \ref{lem1}  (iii), for sufficiently large number $T_n$,
\beas U_{T_n} \cap \overline {B_2} \subset (U\cap \overline{B_2})^\delta.
\eeas
Therefore, for sufficiently large number $n$,
\beas
|\phi_n|^{-1}d(\phi_n, U)
\leq \sup_{u\in (U\cap \overline{B_2})^\delta}d(u, U)\leq \delta.
\eeas
This implies (\ref{6160155}). 

 We also show (\ref{6160156}). Take an arbitrary $\{u_n\in U ; n\in \bbZ_{\geq 1}\} {\rm ~such~that~} |u_n|\rightarrow 0 $.
We may assume that $|u_n| \neq 0$ for any $n\in\bbZ_{\geq 1}$.
Take a subsequence $\{T_n\}_{n=1}^{\infty}$ of $\bbZ_{\geq 1}$ satisfying that $T_n \rightarrow \infty$ and 
\beas
(T_n-1)^{\frac{1}{2}} \leq |u_n|^{-1} \leq T_n^{\frac{1}{2}} \qquad(n=1, 2, ...).
\eeas
Then, obviously, $T_n^{\frac{1}{2}}|u_n|\rightarrow 1$. Therefore, we may assume that $T_n^{\frac{1}{2}}|u_n|\leq 2$. Then
\beas
|u_n|^{-1}d(u_n, \Theta-\theta^*)
\yleq T_n^{\frac{1}{2}}d(u_n, \Theta-\theta^*)
&=&d(T_n^{\frac{1}{2}}u_n, U_{T_n})\\
&\leq& \sup_{u\in U \cap \overline{B_2}}d(u, U_{T_n}) \qquad\big(\because U \text{ is a cone.}\big).
\eeas
Take any  $\delta>0$. From Lemma \ref{lem1}  (ii), for sufficiently large number $T_n$,
\beas U  \cap \overline {B_2} \subset (U_{T_n}\cap \overline{B_4})^\delta.
\eeas
Therefore, for sufficiently large number $n$,
\beas
|u_n|^{-1}d(u_n, \Theta-\theta^*)
\leq \sup_{u\in (U_{T_n}\cap \overline{B_4})^\delta}d(u, U_{T_n})\leq  \delta.
\eeas
This implies (\ref{6160156}).  Thus, [{\bf Ch2}] holds.
	\end{proof}

\subsection{Example: generalized inverse Gaussian distribution}\label{2204270415}
{\rm As an example of a direct use of Theorem \ref{thm1}, we estimate the parameters of  the generalized inverse Gaussian distribution $\text{GIG}(\lambda,\delta,\gamma)$  that is a probability measure on $(0,\infty)$ with the density function 
\bea\label{220404270423}
p_{\text{GIG}}(x;\lambda,\delta,\gamma)
&=&
\frac{(\gamma/\delta)^\lambda}{2K_\lambda(\gamma\delta)}x^{\lambda-1}
\exp\bigg[-\half\bigg(\frac{\delta^2}{x}+\gamma^2x\bigg)\bigg]\qquad(x>0),
\eea
where $K_\nu$ is the modified Bessel function of the second kind with index $\nu$ defined by 
\beas 
K_\nu(x)
&=& 
\half\int_0^\infty y^{\nu-1}\exp\bigg[-\half x\bigg(y+\frac{1}{y}\bigg)\bigg]dy. 
\eeas
The function $K_\nu$ can be extended to complex $\nu$ when $\text{Re}(\nu)>-1/2$. 
Due to the integrability of the function on the right-hand side of (\ref{220404270423}), 
the parameters are restricted as follows. 
(i) $\lambda>0$, $\delta\geq0$, $\gamma>0$, 
(ii) $\lambda=0$, $\delta>0$, $\gamma>0$, and 
(iii) $\lambda<0$, $\delta>0$, $\gamma\geq0$. 
Here we will treat the case (i). 
Case (ii) can also be approached directly or through the duality 
\beas 
X\sim\text{GIG}(\lambda,\delta,\gamma)&\Leftrightarrow&X^{-1}\sim\text{GIG}(-\lambda,\gamma,\delta), 
\eeas
and in particular, 
$
X\sim\Gamma(\lambda,c)\Leftrightarrow X^{-1}\sim{\text{I}\Gamma }(\lambda,c)
$, 
the inverse gamma distribution.

A special case of (i) is the Gamma distribution 
$\Gamma(\lambda,\gamma^2/2)=\text{GIG}(\lambda,0,\gamma)$ having the density function 
\beas 
p_{\Gamma}(x;\lambda,\gamma^2/2)
&=& 
p_{\text{GIG}}(x;\lambda,0,\gamma)
\nn\\&=&
\frac{1}{\Gamma(\lambda)}\begin{pmatrix}\frac{\gamma^2}{2}\end{pmatrix}^\lambda
x^{\lambda-1}\exp\bigg(-\frac{\gamma^2}{2}x\bigg)\qquad(x>0)
\eeas
as the limit of $\text{GIG}(\lambda,\delta,\gamma)$ when $\delta\down0$ since 
\bea\label{2204270504}
K_\nu(z) 
&\sim& 
\half\Gamma(\nu)\bigg(\frac{z}{2}\bigg)^{-\nu}
\eea
as $z\to0$, when $\text{Re}(\nu)>0$ for $\nu\in\bbC$. 
The property (\ref{2204270504}) is verified with the representation 
\bea\label{0404291135}
z^\nu K_\nu(z) 
\yeq
\half\int_0^\infty t^{\nu-1}\exp\bigg[-\half\bigg(t+\frac{z^2}{t}\bigg)\bigg]dt
\eea
when $\text{Re}(\nu)>0$.

\begin{en-text}
	A special case of (\ref{220404270431}) is the inverse Gaussian distribution
	$IG(\delta,\gamma)=\text{GIG}(-1/2,\delta,\gamma)$ with the density 
	\beas 
	p_{\text{IG}}(x;\delta,\gamma)
	&=& 
	\frac{\delta}{\sqrt{2\pi}}e^{\delta\gamma}x^{-\frac{3}{2}}\exp\bigg[-\half\bigg(\frac{\delta^2}{x}+\gamma^2x\bigg)\bigg]
	\eeas
	for $x>0$. 
	A further special case is the L\'evy distribution $\text{L\'evy}(\delta)$ that has the density 
	\beas 
	p_{\text{L\'evy}}(x;\delta)
	&=& 
	\frac{\delta}{\sqrt{2\pi}}x^{-\frac{3}{2}}\exp\bigg(-\frac{\delta^2}{x}\bigg)
	\eeas
	for $x>0$. In fact, 
	\beas 
	\text{L\'evy}(\delta)  \yeq \text{IG}(\delta,0) \yeq \text{GIG}(-1/2,\delta,0).
	\eeas
\end{en-text}

\def\ul{\underline}
\def\ol{\overline}
Let us consider estimation of the parameters based on the independent observations $(X_j)_{j=1}^n$ 
from the experiment 
\beas
\big\{\text{GIG}(\lambda,\delta,\gamma);\>(\lambda,\delta,\gamma)\in[\ul{\lambda},\ol{\lambda}]\times[0,\ol{\delta}]\times[\ul{\gamma},\ol{\gamma}]\big\},
\eeas
where the end points of the intervals satisfy
\beas 
2<\ul{\lambda}<\ol{\lambda}<\infty,\quad
0<\ol{\delta}<\infty,\quad
0<\ul{\gamma}<\ol{\gamma}<\infty.
\eeas
\begin{en-text}
	As illustrative examples of the model that has the true parameters $(\lambda^*,\delta^*,\gamma^*)$ 
	on the boundary of the parameter space, 
	we will consider two cases of the true distribution: 
	\begin{enumerate}[(I)] 
		\im\label{2204270541} $\text{IG}(\delta^*,\gamma^*)=\text{GIG}(-1/2,\delta^*,\gamma^*)$ with 
		$\lambda^*=-1/2$, $\delta^*>0$, $\gamma^*>0$. 
		\im\label{2204270542} $ \text{L\'evy}(\delta^*)=\text{GIG}(-1/2,\delta^*,\gamma^*)$ with 
		$\lambda^*=-1/2$, $\delta^*>0$, $\gamma^*=0$. 
	\end{enumerate}
\end{en-text}
Suppose that the distribution generating the data 
is $\Gamma(\lambda^*,(\gamma^*)^2/2)=\text{GIG}(\lambda^*,0,\gamma^*)$, that is, 
the true value $(\lambda^*,\delta^*,\gamma^*)$ of $(\lambda,\delta,\gamma)$ 
is located on the boundary of the parametric model as 
\beas 
\ul{\lambda}<\lambda^*<\ol{\lambda},\quad 
\delta^*=0,\quad
\ul{\gamma}<\gamma^*<\ol{\gamma}.
\eeas
Jorgencen \cite{jorgensen2012statistical} also treated this case although the parametrization is slightly different. 

To consider the problem, it is possible to re-parametrize the model 
into a natural exponential family and to use Theorem \ref{thmB} described below, but it requires to transform the limit distribution after getting it to return the original parameters.
We keep the original parameters here to illustrate the general approach, and derive the  asymptotic distribution of the maximum likelihood estimator $\big(\wh{\lambda}_n,\wh{\delta}_n,\wh{\gamma}_n\big)$ for $(\lambda,\delta,\gamma)$.  

Let
\beas
\calk(\lambda,a,b) 
&=& 
\frac{b^\lambda}{2}\int_0^\infty t^{\lambda-1}
\exp\bigg[-\half\bigg(bt+\frac{a}{t}\bigg)\bigg]dt
\eeas
Then 
\beas 
\calk(\lambda,\delta^2,\gamma^2)
&=& 
(\gamma\delta)^\lambda K_\lambda(\gamma\delta)
\eeas

Simply denoted by $p(x;\lambda,\delta,\gamma)$, the density $p_{\text{GIG}}(x;\lambda,\delta,\gamma)$ 
is expressed as 
\beas 
p(x;\lambda,\delta,\gamma)
&=& 
\frac{\gamma^{2\lambda}}{2\calk(\lambda,\delta^2,\gamma^2)}x^{\lambda-1}
\exp\bigg[-\half\bigg(\frac{\delta^2}{x}+\gamma^2x\bigg)\bigg]\qquad(x>0)
\eeas
This model is a curved exponential family: 
\beas 
p(x;\lambda,\delta,\gamma)
&=& 
\exp\bigg[(\lambda-1)\log x-\frac{\delta^2}{2x}-\frac{\gamma^2}{2}x-\Psi(\lambda,\delta^2,\gamma^2)\bigg]
\qquad(x>0)
\eeas
with the potential 
\beas 
\Psi(\lambda,a,b)
&=&
-\log\frac{b^{\lambda}}{2\calk(\lambda,a,b)}
\eeas

{The $r$ times tensor product of a vector $v$ is denoted by $v^{\otimes r}$. For a tensor $T=(T_{i_1, ..., i_k})_{i_1, ..., i_k}$ and vectors $v_1=(v_1^{i_1})_{i_1}, ..., (v_k^{i_k})_{i_k}$, we write
\beas
T[v_1 ,..., v_k]\yeq T[v_1 \otimes \cdots \otimes v_k]\yeq \sum_{i_1, ..., i_k}T_{i_1, ..., i_k}v_1^{i_1}\cdots v_{k}^{i_k}.
\eeas}
We have 
\beas &&
\Psi(\lambda,\delta^2,\gamma^2)-\Psi(\lambda^*,0,(\gamma^*)^2)
\nn\\&=& 
D\big[\Delta(\lambda,\delta^2,\gamma^2)\big]
+\int_0^1(1-s)H(s,\lambda,\delta^2,\gamma^2)\big[(\Delta(\lambda,\delta^2,\gamma^2)^{\otimes2}\big]ds.
\eeas
Here 
\beas&&
\Delta(\lambda,\delta^2,\gamma^2)
\yeq
\begin{pmatrix}\lambda-\lambda^*\\ \delta^2 \\ \gamma^2-(\gamma^*)^2\end{pmatrix},\qquad
D
\yeq
\big(\partial_{(\lambda,a,b)}\Psi\big)\big(\lambda^*,0,(\gamma^*)^2\big)
\yeq
\begin{pmatrix}E[\log\xi_0]\\ 
	2^{-1}E[\xi_0^{-1}] \\ 
	2^{-1}E[\xi_0]\end{pmatrix}
\eeas
and 
\beas
H(s,\lambda,\delta^2,\gamma^2)
&=&
\big(\partial_{(\lambda,a,b)}^2\Psi\big)
(s\lambda+(1-s)\lambda^*,s\delta^2,s\gamma^2+(1-s)(\gamma^*)^2)
\nn\\
&=&
\begin{pmatrix}\text{Var}\big[\log\xi_s\big]& 
	2^{-1}\text{Cov}\big[\log\xi_s,\xi_s^{-1}\big]&
	2^{-1}\text{Cov}\big[\log\xi_s,\xi_s\big]
	\\
	2^{-1}\text{Cov}\big[\log\xi_s,\xi_s^{-1}\big]& 
	4^{-1}\text{Var}\big[\xi_s^{-1}\big]&
	4^{-1}\text{Cov}\big[\xi_s^{-1},\xi_s\big]
	\\
	2^{-1}\text{Cov}\big[\log\xi_s,\xi_s\big]&
	4^{-1}\text{Cov}\big[\xi_s^{-1},\xi_s\big]&
	4^{-1}\text{Var}\big[\xi_s\big]
\end{pmatrix},
\eeas
with 
\beas 
\xi_s\yeq
\xi_s(\lambda,\delta,\gamma)=\xi(s\lambda+(1-s)\lambda^*,\sqrt{s}\delta,\sqrt{s\gamma^2+(1-s)(\gamma^*)^2}),
\eeas 
where $\xi(\lambda,\delta,\gamma)$ denotes a random variable such that 
$\xi(\lambda,\delta,\gamma)\sim\text{GIG}(\lambda,\delta,\gamma)$.

Let $a_n=\text{diag}\big[n^{-1/2},n^{-1/4},n^{-1/2}\big]$, and define $U_n$ as (\ref{U_T}). 
For $\bbH_n(\theta)=\sum_{j=1}^n\log p(X_j;\theta)$ and $u=(u_1,u_2,u_3)\in U_n$, we obtain
\beas 
\log \bbZ_n(u)
&=& 
\bbH_n(\theta^*+a_nu)-\bbH_n(\theta^*)
\nn\\&=&
u_1n^{-1/2}\sum_{j=1}^n\wt{\log X_j}
-u_2^2n^{-1/2}\sum_{j=1}^n2^{-1}\wt{X_j^{-1}}
-u_3\gamma^*n^{-1/2}\sum_{j=1}^n\wt{X_j}
\nn\\&&
-n\int_0^1(1-s)H(s,\lambda^*+n^{-1/2}u_1,n^{-1/2}u_2^2,(\gamma^*+n^{-1/2}u_3)^2))
\nn\\&&\hspace{30pt}\times
\bigg[\bigg(n^{-1/2}u_1,n^{-1/2}u_2^2,(\gamma^*+n^{-1/2}u_3)^2-(\gamma)^*)^2\bigg)^{\otimes2}\bigg]ds
\nn
\, -u_3^22^{-1}n^{-1}\sum_{j=1}^n\wt{X}_j,
\eeas
where we are writing $\wt{F(X_j)}=F(X_j)-E[F(X_j)]$ for a function $F(X_j)$ of a random variable $X_j$ satisfying $X_j\sim GIG(\lambda^*,0,\gamma^*)=\Gamma(\lambda^*,(\gamma^*)^2/2)$. 
Then it is possible to write it as 
\beas 
\log \bbZ_n(u)
&=& 
u_1n^{-1/2}\sum_{j=1}^n\wt{\log X_j}
-u_2^2n^{-1/2}\sum_{j=1}^n2^{-1}\wt{X_j^{-1}}
-u_3\gamma^*n^{-1/2}\sum_{j=1}^n\wt{X_j}
\nn\\&&
-\half C\big[(u_1,u_2^2,u_3)^{\otimes}\big]
+r_n(u)
\eeas
with the positive-definite covariance matrix 
\beas 
C
&=& 
\begin{pmatrix}\text{Var}\big[\log\xi_0\big]& 
	2^{-1}\text{Cov}\big[\log\xi_0,\xi_0^{-1}\big]&
	\gamma^*\text{Cov}\big[\log\xi_0,\xi_0\big]
	\\
	2^{-1}\text{Cov}\big[\log\xi_0,\xi_0^{-1}\big]& 
	4^{-1}\text{Var}\big[\xi_0^{-1}\big]&
	2^{-1}\gamma^*\text{Cov}\big[\xi_0^{-1},\xi_0\big]
	\\
	\gamma^*\text{Cov}\big[\log\xi_0,\xi_0\big]&
	2^{-1}\gamma^*\text{Cov}\big[\xi_0^{-1},\xi_0\big]&
	(\gamma^*)^2\text{Var}\big[\xi_0\big]
\end{pmatrix},
\eeas
and 
the term $r_n(u)$ satisfying 
\beas 
\sup_{u\in U_n}\frac{|r_n(u)|}{1+|u_1|^2+|u_2|^4+|u_3|^2}
&=&
O_p(n^{-1/2})
\eeas

Then, Condition $[{\bf A1}]$ is verified by the estimate
\beas 
\lim_{R\to\infty}
\limsup_{n\to\infty}
P\bigg[
M_n
\geq 
\big(2^{-1}\lambda_{\rm min}[C]+O_p(n^{-1/2})\big)R
\bigg]
&=&
0
\eeas
for $M_n=\big|n^{-1/2}\sum_{j=1}^n\wt{\log X_j}\big|+
\big|n^{-1/2}\sum_{j=1}^n2^{-1}\wt{X_j^{-1}}\big|+
\big|n^{-1/2}\gamma^*\sum_{j=1}^n\wt{X_j}\big|$.

Condition ${\bf [A2]}$ is satisfied with 
\beas 
\bbV_n(u)
&=& 
\exp\bigg(
u_1n^{-1/2}\sum_{j=1}^n\wt{\log X_j}
-u_2^2n^{-1/2}\sum_{j=1}^n2^{-1}\wt{X_j^{-1}}
-u_3\gamma^*n^{-1/2}\sum_{j=1}^n\wt{X_j}
\nn\\&&\hspace{30pt}
-\half C\big[(u_1,u_2^2,u_3)^{\otimes2}\big]
\bigg], \\
\bbZ(u) 
&=& 
\exp\bigg(\Delta\cdot(u_1,u_2^2,u_3)-\half C\big[(u_1,u_2^2,u_3)^{\otimes2}\big]\bigg)
\eeas
with a three-dimensional random vector $\Delta\sim N_3(0,C)$.

Now from Example \ref{Exexplicit1}, [{\bf A3}] holds, and we obtain   $U=\bbR\times[0,\infty)\times\bbR$.  Condition [{\bf A4}] obviously holds.
Therefore, Theorem \ref{thm1} concludes that the MLE
$\big(\wh{\lambda}_n,\wh{\delta}_n,\wh{\gamma}_n\big)$ 
admits 
\beas 
\big(n^{1/2}(\wh{\lambda}_n-\lambda^*),n^{1/4}\wh{\delta}_n,n^{1/2}(\wh{\gamma}_n-\gamma^*)\big)
&\overset{d}{\to}&
\wh{u}
\eeas
as $n\to\infty$ when the true distribution of the data is $\text{GIG}(\lambda^*,0,\gamma^*)=\Gamma(\lambda^*,(\gamma^*)^2/2)$, where 
\beas 
\wh{u} 
&=& 
\text{argmax}_{u\in U}\bigg(\Delta\cdot(u_1,u_2^2,u_3)-\half C\big[(u_1,u_2^2,u_3)^{\otimes2}\big]\bigg).
\eeas

\section{Quasi-maximum likelihood estimation}

 
 \subsection{Asymptotic behavior of the QMLE}\label{Quasisection1.1}
 Consider the same situation as in Section 2.1. 
 In this section, we will apply Theorem \ref{thm1} to   a quasi-likelihood function {in a regular case except that the true value $\theta^*$ may lie on the boundary.} Let us suppose that $\Theta$ and $\calt$ are compact.   Let $\caln$ be a bounded open  set in $\bbR^{\sfp}$ satisfying
\bd  
 \im[(i)] for some $\delta>0$, $\Theta\cap\{|\theta-\theta^*|<\delta\} \subset \overline\caln$,
 \im[(ii)] for any $\theta \in \caln$ and any $0 < t \leq 1$,  $t\theta +(1-t)\theta^* \in \caln$.
\ed We suppose that $\bbH_T$ can be extended to a continuous random field defined on $\Omega\times \ol\caln\times\calt$ satisfying that for every $\omega\in\Omega$ and $\tau\in\calt$, $\bbH_T(\omega, \cdot, \tau)$ is of class $C^2(\overline\caln)$\footnote{For an open  set $G$, $C^2\big(\ol G\big)$ denotes the set consisting of all functions which are of class $C^2$ in $G$ and whose derivatives can be continuously extended  on $\ol G$. }. 
 From Condition (ii), for every $\omega\in\Omega$ and $\tau\in\calt$, we can consider  the Taylor series of $\bbH_T(\omega, \cdot, \tau)$  around $\theta^*$ on $\ol\caln$.

Let $\calg$ be a sub-$\sigma$-field of $\calf$.
 Let $\Delta(\theta^*)$ be an $\bbR^{\sfp}$-valued random variable, and let $\Gamma(\theta^*)$ be  a  $\calg$-measurable $\bbR^{\sfp}\otimes\bbR^\sfp$-valued random variable. Let $a_T$ be a deterministic sequence in $GL(\sfp)$.  Define a positive sequence $b_T$ as $b_T=\lambda_{\rm min}\big[(a_T^{\prime }a_T)^{-1}\big]$. 
 Define a continuous random field $\bbY_T : \Omega\times\Xi\rightarrow\bbR$ as 
 \beas\bbY_T(\theta, \tau)=\frac{1}{b_T}\big(\bbH_T(\theta, \tau)-\bbH_T(\theta^*, \tau)\big)
 \qquad\big((\theta, \tau)\in\Xi\big).\eeas
 Let $\bbY: \Omega\times\Theta\rightarrow \bbR$ be a  continuous random field.
For [{\bf A1}] and [{\bf A2}], we consider the following conditions.

 \bd
 
 \im[${\bf [B1]}$]
 $\ds \sup_{(\theta, \tau)\in\Xi}|\bbY_T(\theta, \tau)-\bbY(\theta)|\overset{P}{\rightarrow}0.$ Also, with probability $1$, for any $\theta \in \Theta$ with $\theta\neq \theta^*$,
 \beas\bbY(\theta) < 0.
 \eeas

  \im[${\bf [B2]}$] For some $\tau_0 \in \calt$,
 \begin{eqnarray}
 	&&\sup_{\tau \in \calt}\big|a_T^{\prime}\partial_\theta\bbH_T(\theta^*, \tau)-a_T^{\prime}\partial_\theta\bbH_T(\theta^*, \tau_0)\big|\overset{P}{\to}0,\label{B2(i)}\\
 	&&a_T^{\prime}\partial_\theta\bbH_T(\theta^*, \tau_0) \overset{d_s(\calg)}{\rightarrow}\Delta(\theta^*).\label{B2(ii)}
 \end{eqnarray}
 Also, for any positive sequence $\delta_T$ with $\delta_T\rightarrow0$,
\beas\sup_{\substack{(\theta, \tau)\in \ol\caln\times\calt\\|\theta-\theta^*|<\delta_T}}\big\|a_T^{\prime}\partial^2_\theta\bbH_T(\theta, \tau)a_T+\Gamma(\theta^*) \big\|\overset{P}{\rightarrow}0.
\eeas 
 
  \im[${\bf [B3]}$]  
  $\Gamma(\theta^*)$ is almost surely positive definite.

 \ed
 Define $U_T$ and $U$ as (\ref{U_T}) and (\ref{U}), respectively.
 We also define $\hat{u}_T$ by $\hat{u}_T= (a_T)^{-1}(\hat{\theta}_T - \theta^*)$.
 \begin{theorem}\label{thmB}
 	 Assume {\rm $[{\bf B1}]$-$[{\bf B3}]$ and [{\bf A3}]}. Also, assume {\rm [{\bf A4}]} for the continuous random field $\bbZ$ defined as
 		\begin{eqnarray}\label{Z_TB}\bbZ(u)=\exp\bigg\{\Delta(\theta^*)[u]-\frac{1}{2}\Gamma(\theta^*)[u^{\otimes2}]\bigg\} \qquad(u\in\bbR^\sfp).
 		\end{eqnarray}
 		Then for the $U$-valued random variable $\hat u$ defined in $[{\bf A4}]$, \begin{eqnarray*}
 			\hat u_T\overset{d_s(\calg)}{\rightarrow}\hat u.
 		\end{eqnarray*}
 	Moreover, assume {\rm [{\bf A5}]} for the continuous random field $\bbV_T$ defined as
 		\begin{eqnarray}\label{V_TB}\bbV_T(u)=\exp\bigg\{\Delta_T(\theta^*, \hat\tau_T)[u]-\frac{1}{2}\Gamma(\theta^*)[u^{\otimes2}]\bigg\} \qquad(u\in\bbR^\sfp),
 		\end{eqnarray}
 		where  $\Delta_T(\theta^*, \cdot)$ represents $a_T^{\prime}\partial_\theta\bbH_T(\theta^*, \cdot)$. Then for the $U$-valued random variable $\hat v_T$ defined in $[{\bf A5}]$, \begin{eqnarray*}
 			\hat u_T-\hat v_T=o_P(1).
 			\end{eqnarray*}
 \end{theorem}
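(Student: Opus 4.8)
The plan is to deduce Theorem~\ref{thmB} from Theorem~\ref{thm1}. Since [{\bf A3}] and [{\bf A4}] are assumed directly (and [{\bf A5}] for the second assertion), it suffices to verify [{\bf A2}] together with a tail condition and then invoke Theorem~\ref{thm1}. Rather than checking [{\bf A1}] globally, I would verify the weaker condition ${\bf[A1]}^{\flat\flat}$ of Remark~\ref{remark1}(i), i.e.\ tightness of $\hat u_T=a_T^{-1}(\hat\theta_T-\theta^*)$. This is the natural route here because [{\bf B2}] controls the scaled Hessian only on neighborhoods shrinking to $\theta^*$, so no global quadratic bound on $U_T$ is available (contrast the generalized inverse Gaussian example, where a global polynomial remainder estimate was at hand).

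First I would verify [{\bf A2}]. On $U_T(R)$ one has $|a_Tu|\leq\|a_T\|R\to0$, so a second-order Taylor expansion of $\bbH_T(\cdot,\tau)$ about $\theta^*$ is licit, and the Hessian term is governed by the second display of [{\bf B2}] with $\delta_T=\|a_T\|R$. This yields, uniformly on $U_T(R)\times\calt$,
\beas
\log\bbZ_T(u,\tau)=\Delta_T(\theta^*,\tau)[u]-\tfrac12\Gamma(\theta^*)[u^{\otimes2}]+o_P(1).
\eeas
Comparing with $\log\bbV_T(u)=\Delta_T(\theta^*,\hat\tau_T)[u]-\tfrac12\Gamma(\theta^*)[u^{\otimes2}]$ and using (\ref{B2(i)}) (applied at $\tau$ and at $\hat\tau_T\in\calt$, together with $|u|\leq R$) gives the first part of [{\bf A2}]; exponentiating is harmless since both fields are $O_P(1)$ on $U_T(R)$. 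For the second part, (\ref{B2(i)}) and (\ref{B2(ii)}) give $\Delta_T(\theta^*,\hat\tau_T)\overset{d_s(\calg)}{\to}\Delta(\theta^*)$ by a Slutsky-type argument for stable convergence; since $\Gamma(\theta^*)$ is $\calg$-measurable and independent of $T$, the pair converges stably, and the continuous mapping theorem applied to $(\delta,\gamma)\mapsto\exp\big(\delta[\cdot]-\tfrac12\gamma[\cdot^{\otimes2}]\big)$ into $C(\overline{B_R})$ yields $\bbV_T\overset{d_s(\calg)}{\to}\bbZ$.

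Next I would verify ${\bf[A1]}^{\flat\flat}$ by the standard two-step $M$-estimation argument. For consistency, the asymptotic-maximizing property of $\hat\theta_T$ gives $\bbY_T(\hat\theta_T,\hat\tau_T)\geq-o_P(1)$, and [{\bf B1}] (uniform convergence of $\bbY_T$ to $\bbY$ and strict negativity of $\bbY$ off $\theta^*$ on the compact $\Theta$) forces $\hat\theta_T\overset{P}{\to}\theta^*$. Choosing a deterministic $\delta_T\to0$ with $P[|\hat\theta_T-\theta^*|<\delta_T]\to1$, on that event a Taylor expansion of $\bbH_T(\cdot,\hat\tau_T)$ about $\theta^*$ applies and, by [{\bf B2}]--[{\bf B3}],
\beas
-o_P(1)\leq\bbH_T(\hat\theta_T,\hat\tau_T)-\bbH_T(\theta^*,\hat\tau_T)=\Delta_T(\theta^*,\hat\tau_T)[\hat u_T]-\tfrac12\Gamma(\theta^*)[\hat u_T^{\otimes2}]\big(1+o_P(1)\big),
\eeas
whence $\tfrac12\lambda_{\rm min}[\Gamma(\theta^*)]\,|\hat u_T|^2(1+o_P(1))\leq|\Delta_T(\theta^*,\hat\tau_T)|\,|\hat u_T|+o_P(1)$. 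As $\Gamma(\theta^*)$ is a.s.\ positive definite and $|\Delta_T(\theta^*,\hat\tau_T)|=O_P(1)$, solving this quadratic inequality gives $|\hat u_T|=O_P(1)$, i.e.\ ${\bf[A1]}^{\flat\flat}$. With [{\bf A2}], ${\bf[A1]}^{\flat\flat}$, [{\bf A3}], [{\bf A4}] in hand, Theorem~\ref{thm1} (in its ${\bf[A1]}^{\flat\flat}$ form) gives $\hat u_T\overset{d_s(\calg)}{\to}\hat u$; adjoining [{\bf A5}], its second conclusion gives $\hat u_T-\hat v_T=o_P(1)$.

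The main obstacle I expect is the verification of ${\bf[A1]}^{\flat\flat}$. Because [{\bf B2}] controls the scaled Hessian only on neighborhoods shrinking to $\theta^*$, one cannot bound $\bbZ_T$ uniformly on $U_T$ directly, so consistency must first be extracted from [{\bf B1}] and only then may the local quadratic expansion be invoked. The delicate points are the passage from $\hat\theta_T\overset{P}{\to}\theta^*$ to a deterministic shrinking neighborhood on which [{\bf B2}] is valid, the uniform handling of the nuisance parameter $\tau$ through (\ref{B2(i)}) (so that all estimates are insensitive to the choice of $\hat\tau_T$), and the control of the random intermediate point in the Taylor remainder.
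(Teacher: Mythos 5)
Your proposal is correct, and it reaches the conclusion by a recognizably different route from the paper's own proof. The [{\bf A2}] verification is essentially identical: the paper also Taylor-expands $\bbH_T(\cdot,\tau)$ around $\theta^*$, writing $\bbZ_T(u,\tau)=\exp\{\Delta_T(\theta^*,\tau)[u]-\frac{1}{2}(\Gamma(\theta^*)+r_T(u,\tau))[u^{\otimes2}]\}$ with the integral-form remainder $r_T$ controlled by the second display of [{\bf B2}], and then uses (\ref{B2(i)})--(\ref{B2(ii)}) exactly as you do. The divergence is in the tail condition: the paper verifies the full, estimator-independent condition [{\bf A1}] by splitting $\{|u|\geq R\}$ into $\{|a_Tu|\geq\delta_T\}$ (where it bounds the field via [{\bf B1}], reducing to $P[\sup_{|\theta-\theta^*|\geq\delta_T}\bbY(\theta)+o_P(1)\geq0]\to0$) and $\{|u|\geq R,\,|a_Tu|<\delta_T\}$ (where it dominates $\log\bbZ_T$ by $O_P(1)|u|-(\frac{\epsilon}{2}+o_P(1))|u|^2$ on the event $\lambda_{\rm min}[\Gamma(\theta^*)]\geq\epsilon$, then sends $R\to\infty$, $\epsilon\to0$); you instead verify $[{\bf A1}]^{\flat\flat}$ by the classical consistency-then-rate argument applied to $\hat\theta_T$ itself. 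Both routes are legitimate — Remark \ref{remark1}(i) explicitly licenses substituting $[{\bf A1}]^{\flat\flat}$ for [{\bf A1}], and the paper's proof of Theorem \ref{thm1} in Section 5 in fact proceeds from $[{\bf A1}]^{\flat\flat}$ — and they deploy the same two ingredients in the same roles ([{\bf B1}] for the far region, [{\bf B2}]--[{\bf B3}] for the local quadratic control). What the paper's field-level [{\bf A1}] buys is independence from the choice of $\hat\tau_T$ and of the near-maximizer $\hat\theta_T$ (this is what underlies the paper's claims, in Remark \ref{remark1}(i)--(ii), that the conditions do not depend on how the estimator is taken and that measurable maximizers exist); what your estimator-level route buys is economy, at the price of tying the verification to the particular sequence $(\hat\theta_T,\hat\tau_T)$ and to a quantitative reading of ``asymptotically maximizes'' (namely $\bbH_T(\hat\theta_T,\hat\tau_T)\geq\sup_\Theta\bbH_T(\cdot,\hat\tau_T)-o_P(b_T)$, which is what makes $\bbY_T(\hat\theta_T,\hat\tau_T)\geq-o_P(1)$ and hence your consistency step go through).
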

\begin{remark}\label{remarkB}
		{\rm  Condition [{\bf B1}] implies the following condition.
			\bd
			\im[${\bf [B1]^{\flat}}$] For any $\delta>0$, \beas 
			\varlimsup_{T \to \infty}P\bigg[\sup_{\substack{(\theta, \tau)\in \Theta\times\calt\\|\theta-\theta^*|\geq \delta}}\big\{\bbH_T(\theta, \tau)-\bbH_T(\theta^*, \tau)\big\} \geq 0\bigg] \yeq 0.
			\eeas
			\ed
			Then Theorem \ref{thmB} holds even if we substitute [{\bf B1}] with $[{\bf B1}]^{\flat}$. (See the following proof.)
			
			\begin{en-text}
				[(ii)]  Condition ${\bf [B1]^{\flat}}$ holds under the following condition.
			
			\bd   \im[${\bf [B1]^{\natural}}$] $\Theta \subset \caln$, and for some $\tau_0 \in \calt$, (\ref{B2(i)}) and (\ref{B2(ii)}) hold. Also, 
			\beas\sup_{(\theta, \tau)\in \Theta\times\calt}\big|a_T^{\prime}\partial^2_\theta\bbH_T(\theta, \tau)a_T+G(\theta) \big|\overset{P}{\rightarrow}0
			\eeas 
			for some $C(\Theta : \bbR^\sfp \otimes \bbR^\sfp)$-valued random variable $G$ which satisfies that with probability $1$,  $\int_0^1(1-t)G\big(t\theta+(1-t)\theta^*\big)dt$ is positive definite uniformly in $\theta \in \Theta$. (Then take $\Gamma(\theta^*)$ as $\Gamma(\theta^*)=G(\theta^*)$.)
			\ed
			Besides, ${\bf [B1]^{\natural}}$ implies [{\bf B2}] and [{\bf B3}]. Therefore, we can substitute [{\bf B1}]-[{\bf B3}] with this condition.
			\end{en-text}
		}
		\end{remark}
}
 
 \begin{proof}[Proof of Theorem \ref{thmB}]
 	From Theorem \ref{thm1}, it suffices to show that [{\bf B1}]-[{\bf B3}] imply [{\bf A1}] and [{\bf A2}]. Define  $r_T$ as 
 	\begin{eqnarray*}
 		r_T(u, \tau)&=&-2\int_0^1(1-k)\big\{a_T^{\prime}\partial^2_\theta\bbH_T(\theta^*+ka_Tu, \tau)a_T+\Gamma(\theta^*)\big\}dk \\ 		
 	\end{eqnarray*} for any $(u, \tau) \in U_T\times\calt$.
 	Then,  from Taylor's series, 
 	\begin{eqnarray*}
 		\bbZ_T(u, \tau)=\exp\bigg\{\Delta_T(\theta^*, \tau)[u]-\frac{1}{2}\big(\Gamma(\theta^*)+r_T(u, \tau)\big)[u^{\otimes2}] \bigg\}.
 	\end{eqnarray*}
 Note that from [{\bf B2}], for any positive sequence $\delta_T$ with $\delta_T\to0$,
 \beas
 \sup_{(u, \tau)\in U_T\times \calt, |a_Tu|<\delta_T}\big\|r_T(u, \tau)\big\|=o_P(1).
 \eeas
 	Therefore, from [{\bf B2}], [{\bf A2}] obviously holds if we take $\bbZ$ and $\bbV_T$ as (\ref{Z_TB}) and (\ref{V_TB}), respectively.
 	We show that $[{\bf A1}]$ holds.  Let  $\epsilon$ and $R$ be positive numbers. Also, let $\delta_T$ be a positive sequence with $\delta_T\to0$. Then
 	\bea&&\varlimsup_{T\rightarrow \infty}
 	P\bigg[\sup_{U_T\times \calt,  |u| \geq R} \mathbb{Z}_T(u, \tau)  \geq 1 \bigg]\nonumber\\
 	&\leq&\varlimsup_{T\rightarrow \infty}
 	P\bigg[\sup_{U_T\times\calt,  |a_T u|\geq \delta_T} \mathbb{Z}_T(u, \tau)  \geq 1 \bigg]
 	+
 	\varlimsup_{T\rightarrow \infty}
 	P\bigg[\sup_{\substack{U_T\times\calt,  |u| \geq R\\|a_T u|<\delta_T}} \mathbb{Z}_T(u, \tau)  \geq 1 \bigg].\label{Brights} \eea
 	From [{\bf B1}], we can evaluate the first term in (\ref{Brights}) as
 	\bea
 	&&\varlimsup_{T\rightarrow \infty}
 	P\bigg[\sup_{U_T\times\calt,  |a_T u|\geq \delta_T} \mathbb{Z}_T(u, \tau)  \geq 1 \bigg]\nonumber\\
 	&\leq&
 	\varlimsup_{T\rightarrow \infty}
 	P\bigg[\sup_{\Theta\times\calt,  |\theta-\theta^*|\geq \delta_T} \bigg\{\bbH_T(\theta, \tau)-\bbH_T(\theta^*, \tau)\bigg\}  \geq 0 \bigg]\label{20221015}\\
 	&\leq&
 	\varlimsup_{T\rightarrow \infty}
 	P\bigg[\sup_{\Theta\times\calt,  |\theta-\theta^*|\geq \delta_T} \bigg\{\bbY(\theta)+\bbY_T(\theta, \tau)-\bbY(\theta)\bigg\}  \geq 0 \bigg]\nonumber\\
 	&\leq&
 	\varlimsup_{T\rightarrow \infty}
 	P\bigg[\sup_{\Theta\times\calt,  |\theta-\theta^*|\geq \delta_T}\bbY(\theta)+o_P(1)  \geq 0 \bigg].\nonumber
 	\eea
 	(When using $[{\bf B1}]^{\flat}$,  stop at (\ref{20221015})).  Thus,  by  choosing  $\delta_T$ properly and using [{\bf B1}] (or $[{\bf B1}]^{\flat}$), we have
 	\beas
 \varlimsup_{T\rightarrow \infty}
 	P\bigg[\sup_{U_T\times\calt,  |a_T u|\geq \delta_T} \mathbb{Z}_T(u, \tau)  \geq 1 \bigg]\yeq 0.
 	\eeas
 	Also, from [{\bf B2}] and [{\bf B3}], we can evaluate the second term in (\ref{Brights}) as
 	\begin{eqnarray*}
 		&&\varlimsup_{T\rightarrow \infty}
 		P\bigg[\sup_{\substack{U_T\times\calt,  |u| \geq R\\|a_T u|<\delta_T}} \mathbb{Z}_T(u)  \geq 1 \bigg] \\
 		&=& 
 		\varlimsup_{T\rightarrow \infty}P\bigg[\sup_{\substack{U_T\times\calt,  |u| \geq R\\|a_T u|<\delta_T}} \bigg\{\Delta_T(\theta^*, \tau)[u]-\frac{1}{2}\big(\Gamma(\theta^*)+r_T(u, \tau)\big)[u^{\otimes2}]\bigg\}\geq 0 \bigg]\\
 		&\leq&
 		\varlimsup_{T\rightarrow \infty}P\bigg[\sup_{\substack{U_T\times\calt,  |u| \geq R\\|a_T u|<\delta_T}} \bigg\{O_P(1)|u|-\frac{1}{2}\lambda_{\min}[\Gamma(\theta^*)]|u|^2  +o_P(1)|u|^2\bigg\}\geq0 \bigg]\\
 		&\leq& 
 		\varlimsup_{T\rightarrow \infty}P\bigg[\sup_{\substack{U_T\times\calt,  |u| \geq R\\|a_T u|<\delta_T}} \bigg\{O_P(1)|u|-\frac{1}{2}\epsilon|u|^2  +o_P(1)|u|^2\bigg\}\geq 0 \bigg]
 		+P\bigg[\lambda_{\min}[\Gamma(\theta^*)]<\epsilon \bigg]\\
 		&\overset{R\rightarrow\infty}{\rightarrow}&
 	P\bigg[\lambda_{\min}[\Gamma(\theta^*)]<\epsilon \bigg]\,\overset{\epsilon\rightarrow0}{\rightarrow}\, 0.
 	\end{eqnarray*}
 	Thus, $[{\bf A1}]$ holds.
 \end{proof}


 	 \subsection{It\^o process}\label{Ito}
 			Consider a $\sfd$-dimensional It\^o process 
 		\beas(Y_t)_{t\in[0,T]}=\big((Y_t^1, ..., Y_t^{\sfd})^\prime\big)_{t \in [0, T]}
 		\eeas
 		 having a decomposition 
 		\bea\label{SDE} 
 		Y_t &=& Y_0+\int_0^tb_sds+\int_0^t\sigma(X_s, a)dw_s\qquad(t\in[0,T])
 		\eea
 		on a stochastic basis $(\Omega,\calf,\F,P)$, $\F=(\calf_t)_{t\in[0,T]}$, where 
 		$\{b_t\}_{t\in[0,T]}$ is an unobservable $\sfd$-dimensional progressively measurable process, $w=(w_t)_{t\in[0,T]}$ is an $\sfr$-dimensional standard Wiener process,  and $(X_t)_{t \in [0, T]}$ is a $\sf{c}$-dimensional $\F$-adapted continuous process. Here we parametrize an $\bbR^\sfd \otimes \bbR^\sfr$-valued function $\sigma$ by a matrix $a $ as
 		\beas
 		\sigma(x, a)&=&f(x)\, a \qquad(x \in \bbR^\sfd, a \in \bbR^{\sf{m}}\otimes\bbR^{\sfr} ),
 		\eeas
 		where $f : \bbR^{\sf{c}} \to \bbR^{\sfd}\times\bbR^{\sf{m}}$ is a $C^2$ map.
 		For the sake of identifiability, we parametrize $a$ by   $A=aa^\prime$. Denote by $a^*$ the true value of $a$. Then the true value $A^*$ of $A$ is obviously determined as  $A^*=a^*(a^*)^\prime$.
 		 We take a compact set $\cala \subset \cals_+^{\sfm}$ as the parameter space of $A$, where $\cals_+^\sfm$ denotes 
 		 the set of all $\sfm$-dimensional positive semi-definite matrices. 
 		
 		Consider a case  where $A^*$ may be degenerate. (Though we will assume in Condition [{\bf I4}] below that $f(X_t)A^*f^{\prime}(X_t)$ is non-degenerate for any $0\leq t \leq T$.)
 		Therefore, $A^*$ may be on the boundary of $\cala$.  
 		For simplicity, suppose that for some  $\delta>0$,
 		\bea\label{calnIto}
 		\big\{A \in \cala ; \|A-A^*\|<\delta \big\}\yeq \big\{A \in \cals_+^{\sf{m}} ;  \|A-A^*\|<\delta \big\}.
 		\eea
 		  For when the true value is in the interior of the parameter space, see e.g. Genon-Catalot and Jacod \cite{genon1993estimation} and Uchida and Yoshida \cite{uchida2013quasi}.
 		
 		We consider the following conditions.
 		\bd
 		\im[${\bf [I1]}$]  For every $p>1$, 
 		$\ds \sup_{0\leq t \leq T}E\big[|b_t|^p\big]< \infty$.
 		
 		\im[${\bf [I2]}$]  {The continuous process $X=(X_t)_{t \in [0, T]}$ admits a representation
 		\beas
 		X_t &=& X_0+\int_0^tc_sds+\int_0^td_sdw_s+\int_0^t\tilde d_sd\tilde w_s\qquad(t\in[0,T]),
 		\eeas
 		where $\tilde w$ is an $\sfr_1$-dimensional standard Wiener process  independent of $w$, and  $c$, $d$ and $\tilde d$ are progressively measurable processes taking values in $\bbR^{\sf{c}}$,  $\bbR^{\sf{c}} \otimes \bbR^\sfr$ and $\bbR^{\sf{c}} \otimes \bbR^{\sfr_1}$, respectively, satisfying that  for every $p>1$, \beas
 		\sup_{0\leq t \leq T}E\big[|X_0|^p+|c_t|^p + \|d_t\|^p+\|\tilde d_t\|^p\big] < \infty.
 		\eeas  }
 		
 		\im[${\bf [I3]}$] For some $C>0$, \beas
 		\big\|\partial_x^if(x) \big\| \leq C\big(1+|x|^C\big) \qquad(x \in \bbR^{\sf{c}}, i=0,1,2).
 		\eeas
 		
 		\im[${\bf [I4]}$]  
 		{ With probability $1$, the image of $X$ is contained in $\calx$, where $\calx\subset\bbR^{\sf{c}}$ is a closed subset, and $f(x)Af^{\prime}(x)$ is elliptic uniformly in $(x, A) \in \calx \times \cala$.
 		  (Note that $f^{\prime}(x)$ denotes the transposition of $f(x)$.)}
 		
 	 \im[${\bf [I5]}$] With probability $1$, for any $B\in \cals^{\sfm}$ with $B\neq O$,  $\sup_{0\leq t \leq T}\|f(X_t)B f^{\prime}(X_t)\|>0$, where $\cals^\sfm$ denotes the set of all $\sfm$-dimensional symmetric matrices
 		\ed
 		 Define $\sfp$ as $\sfp=\frac{\sfm(\sfm+1)}{2}$. For simplicity, we parametrize $A$ by $\theta$ as  $\theta=\psi(A)$, where $\psi $ is defined as (\ref{psi}). Then the parameter space $\Theta$ of  $\theta$ is determined as $\Theta=\psi(\cala)$.  Also, the true value $\theta^*$  of $\theta \in \Theta$ becomes $\psi(A^*)$.
 		 
 		 We also suppose that ${\rm Int(\Theta)}$\footnote{${\rm Int}(B)$ denotes the interior of $B$ for a subset $B$.} is a domain of $\bbR^\sfp$ satisfying $\overline{{\rm Int}(\Theta)}=\Theta$ and 
 		 	Sobolev's inequalities for embedding $W^{1, p}\big({\rm Int}(\Theta)\big) \hookrightarrow C(\Theta)$ for $p>\sfp$.\footnote{A sufficient condition for it is the weak cone condition defined by  Adams \cite{adams1977cone} as follows: a domain $\Omega \subset \bbR^{\sf{p}}$ is said to satisfy the weak cone condition if there exists a number $\delta>0$ such that  \beas \mu_{\sf{p}}\big(\{y \in R(x); \, |y-x|<1\}\big) \geq \delta \qquad(x \in \Omega),\eeas
 		 		where $\mu_{\sf{p}}$ denotes the Lebesgue measure on $\bbR^{\sf{p}}$, and $R(x)$ consists of all points $y \in \Omega$ such that the line segment joining $x$ to $y$ lies entirely in $\Omega$.  If $\Omega$ is bounded and convex, then the weak cone condition holds.
 	 		For example, when taking $\cala$ as $\cala=\big\{A \in \cals_+^\sfm ; \|A\|  \leq  R\big\}$ for  $R>0$ with $\|A^*\| <R$,  both  (\ref{calnIto}) and the weak cone condition on ${\rm Int}(\Theta)$ hold.} 

 		Observing the data $(X_\tj)_{j=0,1,...,n}$ with $\tj=jT/n$, we want to estimate  the true value $\theta^*$. 
 		For this purpose, we maximize the estimation function
 		\beas
 		\mathbb{\Psi}_n(\theta)=\log\prod_{i=1}^n\frac{1}{|S(X_{t_{i-1}}, \theta)|^{\frac{1}{2}}}\exp\bigg(-\frac{1}{2h}S(X_{t_{i-1}}, \theta)^{-1}\big[(\Delta_iY)^{\otimes2}\big]\bigg) \qquad(\theta \in \Theta), 
 		\eeas
 	with $\Delta_i Y = Y_{t_i}-Y_{t_{i-1}}$, where $S$   is defined as
 		\beas
 		S(x, \theta) &=& f(x) Af(x)^{\prime}
 	\qquad\big(x \in \bbR^{\sf{c}}, \theta \in \Theta,  A=\psi^{-1}(\theta)\big).
 		\eeas
 		Here $|S(x, \theta)|$ denotes $\det\big(S(x, \theta)\big)$.
 		Let $\hat\theta_n$ be a $\Theta$-valued random variable  maximizing $\mathbb\Psi_n(\theta)$ on $\Theta$. We derive the limit distribution of  the estimator $\hat\theta_n$ of $\theta^*$.
 		
 		Define $\bbY(\theta)$ as
 		\beas
 		\bbY(\theta)&=&-\frac{1}{2T}\int_0^T\bigg\{{\rm Tr}\big(S^{-1}(X_t, \theta)S(X_t, \theta^*)-I_\sfd\big)+\log\frac{|S(X_t, \theta)|}{|S(X_t, \theta^*)|}\bigg\}dt\\
 		&=&-\frac{1}{2T}\int_0^T\int_0^1k\bigg\| \big(kS(X_t, \theta)+(1-k)S(X_t, \theta^*)\big)^{-\frac{1}{2}}\big(S(X_t, \theta)-S(X_t, \theta^*)\big)\\
 		&&\big(kS(X_t, \theta)+(1-k)S(X_t, \theta^*)\big)^{-\frac{1}{2}}\bigg\|^2dkdt
 		\qquad(\theta \in \Theta).
 		\eeas
 		For any $D=(d_{ij})_{1\leq i,j \leq \sfr} \in \bbR^\sfr \otimes \bbR^\sfr$, the ${\sfr^2}$-dimensional vector $(d_{11}, ..., d_{1\sfr}, d_{21}, ..., d_{\sfr\sfr})^{\prime}$ is denoted by $v(D)$.  Also, we denote $\{v(D)\}^{\prime}$ by $v^\prime(D)$.
 		Then define an $\bbR^{\sfp} \otimes \bbR^{\sfr^2}$-valued $\F$-adapted continuous process $\rho=(\rho_t)_{t \in [0, T]}$ as
 		\beas
 		u \rho_t\yeq \frac{1}{\sqrt{2}}v^{\prime}\big(\sigma^{\prime}(X_t, a^*)\,\partial_\theta S^{-1}(X_t, \theta^*)\,\sigma(X_t, a^*)[u]\big)
 		\qquad(u \in \bbR^\sfp).
 		\eeas
 		Denote by $W=(W_t)_{t\geq 0}$  an $\sfr^2$-dimensional Wiener process that is independent of $\calf$ and defined on an extension of $(\Omega, \calf, P)$.
 		Define a random field $\bbZ$ as 
 		\begin{eqnarray*}
 			\bbZ(u)=\Delta(\theta^*)[u]-\frac{1}{2}\Gamma(\theta^*)[u^{\otimes2}] \qquad (u \in \bbR^\sfp),
 		\end{eqnarray*}
 		where \beas
 		\Delta(\theta^*)&=&\frac{1}{\sqrt{T}}\int_0^T \rho_t dW_t,\\
 		 \Gamma(\theta^*)[u, v]&=&\frac{1}{T}\int_0^T \rho_t^{\otimes2}[u, v] dt\yeq \frac{1}{2T}\int_0^T{\rm Tr}\big(S^{-1}\partial_\theta S\, S^{-1} \partial_\theta S(X_t, \theta^*) [u, v]\big)dt
 		 	\eeas
 		 	  for any  $u, v \in \bbR^\sfp$. Also, define a subset $U$ of $\bbR^\sfp$ as  (\ref{ExpositivesemidefiU}), that is,
 		\beas
 		U \yeq \psi\bigg(\big\{ w \in \cals^{\sfm} ; K^\prime w K \in \cals_+^{\sfm-\sfr^*}\big\}\bigg),
 		\eeas
 		where $K $ is a $\sfm \times (\sfm-\sfr^*)$  matrix whose column vectors form a basis for ${\rm Ker}(A^*)$. \big(If $\sfr^*=\sfm$, then  consider $U$ just as $\psi(\cals^\sfm)$.\big)
 		
 		{\colorr Note that under [{\bf I5}], with probability one, for any $u \in \bbR^\sfp\setminus\{0\}$,
 		\bea\label{Gammanon}
 	0<\sup_{t \in [0, T]}\big\|f(X_t)\psi^{-1}(u)f^\prime(X_t)\big\|^2=\sup_{t \in [0, T]}\big\|S(X_t, u)\big\|^2=\sup_{t \in [0, T]}\big\|\partial_\theta S(X_t, \theta^*)[u]\big\|^2.
 \eea Then, with probability $1$, $\Gamma(\theta^*)$ is non-degenerate. Indeed,   $\Gamma(\theta^*)[u^{\otimes2}]=0$ implies $\partial_\theta S(X_t, \theta^*) [u]=0$ for any $t \in [0, T]$ since $\ds \Gamma(\theta^*)[u^{\otimes2}]=T^{-1}\int_0^T \big\|S^{-1/2}\partial_\theta S\, S^{-1/2} (X_t, \theta^*) [u]\big\|^2dt$.  This contradicts (\ref{Gammanon}) when $u\neq 0$.
 
 Similarly, with probability $1$, for any $\theta\in \Theta$ with $\theta\neq \theta^*$,  $\bbY(\theta)<0$.
 		From the convexity of $U$ and the convexity of $\bbZ$, Condition [{\bf A4}] holds.} 
 		Define $\hat{u}$ as the $U$-valued random variable defined in [{\bf A4}] which becomes the unique maximizer of $\bbZ$ on $U$ almost surely. Then we obtain the asymptotic distribution of $\hat \theta_n$. The proof of the following theorem is written in Section \ref{section6}. 
 		\begin{theorem}\label{ex2thm}  Assume $(\ref{calnIto})$ and $[{\bf I1}]$-$[{\bf I5}]$. Then for the maximizer $\hat \theta_n$ of $\mathbb{\Psi}_n$,
 		\begin{eqnarray}\label{ex2hatucon}
 		\sqrt{n}(\hat\theta_n - \theta^*) \overset{d_s(\calf)}{\rightarrow} \hat{u}.
 		\end{eqnarray}
 
 	\end{theorem}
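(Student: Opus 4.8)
The plan is to realize the model as an instance of the abstract quasi-likelihood setup of Section \ref{Quasisection1.1} and to verify the hypotheses of Theorem \ref{thmB}. I take $\bbH_n=\mathbb{\Psi}_n$ (no nuisance parameter enters the estimating function, so the role of $\tau$ is vacuous and $\calt$ may be taken trivial), $a_n=n^{-1/2}I_\sfp$, so that $b_n=\lambda_{\rm min}[(a_n^\prime a_n)^{-1}]=n$ and $\bbY_n(\theta)=n^{-1}\big(\mathbb{\Psi}_n(\theta)-\mathbb{\Psi}_n(\theta^*)\big)$, and $\calg=\calf$. Three of the required conditions are already available. Condition [{\bf A3}] and the explicit form (\ref{ExpositivesemidefiU}) of $U$ follow from Example \ref{Expositivesemidefinite}, because the localization assumption (\ref{calnIto}) is precisely (\ref{A^*}) for the present $\cala$, $A^*$ and $\sfr^*={\rm rank}(A^*)$. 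Conditions [{\bf A4}] and [{\bf B3}] are the content of the paragraph preceding the theorem: under [{\bf I5}] the bracket $\Gamma(\theta^*)$ is a.s.\ positive definite by (\ref{Gammanon}), $\bbY(\theta)<0$ for $\theta\neq\theta^*$, and the convexity of $U$ together with the strict concavity of $\log\bbZ$ yields the a.s.\ existence and uniqueness of the maximizer $\hat u$ demanded by [{\bf A4}]. Thus the substantive work is the verification of [{\bf B1}] and [{\bf B2}].

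For [{\bf B1}] I would express $\bbY_n(\theta)$ through the discrete sums $n^{-1}\sum_i h^{-1}S^{-1}(X_{t_{i-1}},\theta)\big[(\Delta_iY)^{\otimes2}\big]$ and $n^{-1}\sum_i\log|S(X_{t_{i-1}},\theta)|$. An It\^o--Taylor expansion of $\Delta_iY$ shows that the drift contributes $O(h)$ per increment and is negligible after scaling, while the diffusion part obeys $h^{-1}(\Delta_iY)^{\otimes2}=S(X_{t_{i-1}},\theta^*)+o_P(1)$ in the averaged sense; the moment bounds of [{\bf I1}]--[{\bf I3}], the semimartingale representation of $X$ in [{\bf I2}], and the ellipticity in [{\bf I4}] control the remainders and the motion of $X$ inside each interval. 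The Riemann sums then converge to the two integral representations of $\bbY(\theta)$. To promote this pointwise convergence to $\sup_\theta|\bbY_n(\theta)-\bbY(\theta)|\overset{P}{\to}0$, I would bound the $L^p$ norms of $\bbY_n-\bbY$ and of its $\theta$-gradient on ${\rm Int}(\Theta)$ and invoke the assumed Sobolev embedding $W^{1,p}\big({\rm Int}(\Theta)\big)\hookrightarrow C(\Theta)$ with $p>\sfp$. The identifiability $\bbY(\theta)<0$ for $\theta\neq\theta^*$ is read off the squared-norm representation of $\bbY$, which vanishes only if $f(X_t)\big(\psi^{-1}(\theta)-\psi^{-1}(\theta^*)\big)f^\prime(X_t)=0$ for all $t$, contradicting [{\bf I5}] unless $\theta=\theta^*$.

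For [{\bf B2}], the absence of a nuisance parameter makes (\ref{B2(i)}) vacuous, and the score $a_n^\prime\partial_\theta\mathbb{\Psi}_n(\theta^*)=n^{-1/2}\partial_\theta\mathbb{\Psi}_n(\theta^*)$ is, up to asymptotically negligible drift and discretization remainders, a triangular-array martingale whose summands are the centered quadratic forms $h^{-1}(\Delta_iY)^{\otimes2}-S(X_{t_{i-1}},\theta^*)$ contracted against $\partial_\theta S^{-1}(X_{t_{i-1}},\theta^*)$. I would check the Lindeberg--Lyapunov and predictable-bracket conditions of a martingale stable central limit theorem, identify the limiting random bracket with $\Gamma(\theta^*)$ (as encoded by $\rho$), and conclude $n^{-1/2}\partial_\theta\mathbb{\Psi}_n(\theta^*)\overset{d_s(\calf)}{\to}\Delta(\theta^*)=T^{-1/2}\int_0^T\rho_t\,dW_t$, with $W$ the orthogonal Wiener process carrying the residual randomness. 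The second part of [{\bf B2}], namely $\sup_{|\theta-\theta^*|<\delta_n}\|n^{-1}\partial_\theta^2\mathbb{\Psi}_n(\theta)+\Gamma(\theta^*)\|\overset{P}{\to}0$, is a local uniform law of large numbers: the leading term of $n^{-1}\partial_\theta^2\mathbb{\Psi}_n$ reduces through $h^{-1}(\Delta_iY)^{\otimes2}\approx S(X_{t_{i-1}},\theta^*)$ to the Riemann sum converging to $-\Gamma(\theta^*)$, and the near-$\theta^*$ continuity is handled by the same moment-plus-Sobolev device as in [{\bf B1}]. Granting [{\bf B1}]--[{\bf B3}], [{\bf A3}] and [{\bf A4}], Theorem \ref{thmB} then yields (\ref{ex2hatucon}).

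The hard part will be the stable central limit theorem for the score. One must expand each increment $\Delta_iY$ by It\^o's formula, separate the principal diffusion contribution from the drift and from the motion of $X_s$ within $[t_{i-1},t_i]$, check the bracket and negligibility conditions uniformly over the $n$ summands, and verify that the extra fluctuation is asymptotically Gaussian and asymptotically $\calf$-independent so that it is representable through a Wiener process independent of $\calf$. The moment estimates furnished by [{\bf I1}]--[{\bf I4}] are precisely what make these steps, as well as the uniform laws of large numbers in [{\bf B1}] and in the Hessian, go through; the boundary of $\cala$ enters only through the shape of $U$ in (\ref{ExpositivesemidefiU}) and is immaterial to [{\bf B1}]--[{\bf B3}].
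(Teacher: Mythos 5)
Your overall architecture coincides with the paper's: take $\bbH_n=\mathbb{\Psi}_n$, $a_n=n^{-1/2}I_\sfp$, $\calg=\calf$, obtain [{\bf A3}] and the explicit form of $U$ from Example \ref{Expositivesemidefinite} via (\ref{calnIto}), obtain [{\bf A4}] and [{\bf B3}] from [{\bf I5}] and convexity as in the paragraph preceding the theorem, verify [{\bf B1}] and [{\bf B2}] through Riemann-sum approximations made uniform by the Sobolev embedding, prove stable convergence of the score by a martingale stable central limit theorem (the paper uses Theorem 3-2 of Jacod \cite{jacod1997continuous}), and conclude by Theorem \ref{thmB}.

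There is, however, a genuine gap at the step you dispose of with ``the drift contributes $O(h)$ per increment and is negligible after scaling'' and, later, ``up to asymptotically negligible drift \dots remainders.'' That order counting suffices for $\bbY_n$ (at scale $n^{-1}$ the drift--Brownian cross terms total $O(h^{1/2})$), but it fails at the score scale $n^{-1/2}$: each increment of $h^{-1}(\Delta_iY)^{\otimes2}$ contains a cross term $h^{-1}\,\mathrm{sym}\big(\int_{t_{i-1}}^{t_i}b_s\,ds\otimes\sigma_{i-1}\Delta_iw\big)$ of size $O(h^{1/2})$ in $L^1$, so the crude bound for $n^{-1/2}\sum_{i=1}^n$ of such terms is $n^{-1/2}\cdot n\cdot O(h^{1/2})=O(\sqrt{nh})=O(\sqrt{T})$, a constant rather than $o_P(1)$. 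Beating this requires cancellation, i.e.\ conditional centering: one must replace $b_s$ by an $\calf_{t_{i-1}}$-measurable surrogate so that the cross term becomes a martingale increment. But under [{\bf I1}] the drift $b$ is merely a progressively measurable process with moment bounds --- it is not continuous, not a function of $X_t$, not a semimartingale --- so you can neither It\^o-expand $b$ nor freeze it at $t_{i-1}$ directly. This is precisely why the paper introduces the truncated, mollified drift $\beta_t=\epsilon_n^{-1}\int_{(t-\epsilon_n)\vee 0}^{t}b_s1_{\{|b_s|\leq\epsilon_n^{-1}\}}ds$ of (\ref{defbeta}) with $h\epsilon_n^{-1}\to0$, decomposes $\Delta_iY$ as in (\ref{DeltaY}), controls the errors $b-\beta$ and $\beta_\cdot-\beta_{t_{i-1}}$ in $L^p$ via (\ref{evalubeta}), and treats the remaining cross term $J_n$ in (\ref{SDEsame}) as a martingale-type term of order $o_P(1)$. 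Your proposal contains no substitute for this device, and without it the negligibility you assert cannot be derived from [{\bf I1}]--[{\bf I4}] as stated; the remainder of your plan (Sobolev uniformity for [{\bf B1}] and the Hessian, the stable CLT with bracket identified as $\Gamma(\theta^*)$, identifiability from [{\bf I5}]) matches the paper and is sound.
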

 		
 		{Also, consider the same situation above, and  parametrize $\{b_t\}_{t \in [0, T]}$ in (\ref{SDE}) by $\gamma \in {\rm \ol G}$ as $b_t = g(X_t, \gamma)$, 
 	 where ${\rm G}$ is a domain in $\bbR^\sfq$ admitting Sobolev's inequalities for embedding $W^{1, p}({\rm G}) \hookrightarrow C(\ol G)$ for $p>\sfq$, and $g : \bbR^{\sf{c}} \times \bbR^\sfq  \to \bbR^\sfd$ is a continuous function.
 		Thus, the stochastic differential equation (\ref{SDE}) is parametrized by $(A, \gamma) \in \cala \times {\rm \ol G}$. In addition to  [{\bf I1}]-[{\bf I5}], we consider the following condition.
 	r  \bd
 		\im[${\bf [I6]}$]  The function $g$ has continuous derivatives satisfying that  for some $C>0$, \beas
 		\sup_{\gamma \in {\rm G}}\big\|\partial_\gamma^i g(x, \gamma) \big\| \leq C\big(1+|x|^C\big)  \qquad(x \in \bbR^{\sf{c}}, i=0, 1).
 		\eeas
 		\ed} 
 	 As before, we parametrize $A$ by $\theta=\psi(A)$. When estimating  $\theta^*\in \Theta$, the drift parameter $\gamma$ can be considered as nuisance. Therefore, we denote $\gamma$ by $\tau$. Then the parameter space $\Xi =\Theta \times \calt$  is determined as $\Theta  \times \calt =\psi(\cala) \times {\rm \ol G}$.  
 		Instead of $\mathbb{\Psi}_n$, we maximize the estimation function
 		\beas
 		\widetilde{\mathbb{\Psi}}_n(\theta, \tau)=\log\prod_{i=1}^n\frac{1}{|S(X_{t_{i-1}}, \theta)|^{\frac{1}{2}}}\exp\bigg(-\frac{1}{2h}S(X_{t_{i-1}}, \theta)^{-1}\big[(\Delta_iY-hg(X_{t_{i-1}}, \tau))^{\otimes2}\big]\bigg) \qquad\big((\theta, \tau) \in \Xi\big).
 		\eeas
 		For each $n$, take an arbitrary $\calt$-valued random variable $\hat\tau_n$, and  take $\hat\theta_n$ again as  a $\Theta$-valued random variable  maximizing $\widetilde{\mathbb\Psi}_n(\cdot, \hat\tau_n)$ on $\Theta$.  Also, define a random field $\bbV_n$ as
 		\begin{eqnarray*}\bbV_n(u)=\exp\bigg\{\frac{1}{\sqrt n}\partial_\theta \widetilde{\mathbb{\Psi}}_n(\theta^*, \hat\tau_n)[u]-\frac{1}{2}\Gamma(\theta^*)[u^{\otimes2}]\bigg\} \qquad(u\in\bbR^\sfp).
 		\end{eqnarray*}
 		Since $\Gamma(\theta^*)$ is non-degenerate under [{\bf I5}], we can  take  a $U$-valued random variable  $\hat v_n$ which is the unique maximizer of $\bbV_n$ on $U$ almost surely.
 		\begin{theorem}\label{SDEtaudisappear}  Assume $(\ref{calnIto})$ and $[{\bf I1}]$-$[{\bf I6}]$. Then for the maximizer $\hat\theta_n$ of $\widetilde{\mathbb{\Psi}}_n(\cdot, \hat\tau_n)$,
 			\begin{eqnarray}\label{ex2hatu}
 			\hat{u}_n:=\sqrt{n}(\hat\theta_n - \theta^*) \overset{d_s(\calf)}{\rightarrow} \hat{u}.
 			\end{eqnarray}
 			Also,
 			\begin{eqnarray}\label{ex2hatv_n}
 				\hat u_n-\hat v_n=o_P(1).
 			\end{eqnarray}
 		\end{theorem}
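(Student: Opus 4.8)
The plan is to apply Theorem~\ref{thmB} with $a_n=n^{-1/2}I_\sfp$, so that $b_n=n$, treating the drift parameter $\gamma=\tau$ as the nuisance. A good deal of the hypotheses is already in place: [{\bf A3}] is supplied by Example~\ref{Expositivesemidefinite} under $(\ref{calnIto})$, [{\bf A4}] follows from the convexity of $U$ together with the strict concavity of $u\mapsto\Delta(\theta^*)[u]-\tfrac12\Gamma(\theta^*)[u^{\otimes2}]$ (as recorded above (\ref{ex2hatucon})), and [{\bf B3}] is exactly the non-degeneracy of $\Gamma(\theta^*)$ that [{\bf I5}] yields through (\ref{Gammanon}). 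It therefore remains to verify [{\bf B1}] and [{\bf B2}] for $\widetilde{\mathbb{\Psi}}_n$, after which (\ref{ex2hatu}) follows from Theorem~\ref{thmB}; the supplementary conclusion (\ref{ex2hatv_n}) will come from checking [{\bf A5}] for $\bbV_n$. The guiding principle is that the drift enters only through terms that are negligible after normalization, \emph{uniformly in} $\tau$.

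Concretely, I would write $\widetilde{\mathbb{\Psi}}_n(\theta,\tau)=\mathbb{\Psi}_n(\theta)+R_n(\theta,\tau)$, where the remainder produced by replacing $(\Delta_i Y)^{\otimes2}$ with $(\Delta_i Y-hg(X_{t_{i-1}},\tau))^{\otimes2}$ is $R_n(\theta,\tau)=\sum_i S^{-1}(X_{t_{i-1}},\theta)[g(X_{t_{i-1}},\tau),\Delta_i Y]-\tfrac{h}{2}\sum_i S^{-1}(X_{t_{i-1}},\theta)[g(X_{t_{i-1}},\tau)^{\otimes2}]$. Since $\Delta_i Y$ is of order $\sqrt h=O_P(n^{-1/2})$, the martingale-transform sums $\sum_i(\cdot)[g,\Delta_i Y]$ are $O_P(1)$ by the Burkholder--Davis--Gundy inequality, while the Riemann-type sums $h\sum_i(\cdot)$ are also $O_P(1)$; the same holds after applying $\partial_\theta$ and $\partial_\theta^2$, which only act on the smooth factor $S^{-1}(\cdot,\theta)$. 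Consequently $b_n^{-1}R_n=O_P(n^{-1})$, $a_n^\prime\partial_\theta R_n(\theta^*,\tau)=O_P(n^{-1/2})$, and $a_n^\prime\partial_\theta^2 R_n\,a_n=O_P(n^{-1})$. Combined with the estimates for $\mathbb{\Psi}_n$ established along with Theorem~\ref{ex2thm}, this gives [{\bf B1}] with the same limit $\bbY$, the negligibility (\ref{B2(i)}) of the $\tau$-dependence of the score, and (\ref{B2(ii)}) with the unchanged $\Delta(\theta^*)$, as well as the Hessian convergence to $-\Gamma(\theta^*)$.

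The main obstacle is securing all these bounds \emph{uniformly in} $\tau\in\calt=\ol G$, for $\hat\tau_n$ is an arbitrary (random) $\calt$-valued variable over which we have no control and cannot simply fix. I would treat this exactly as in the proof of Theorem~\ref{ex2thm}: establish $L^p$ estimates for $p>\sfq$ not only for the functionals of $\tau$ appearing in $R_n$ and its $\theta$-derivatives, but also for their $\tau$-derivatives, and then pass to $\sup_\tau$ through the Sobolev embedding $W^{1,p}(G)\hookrightarrow C(\ol G)$ assumed for $G$. The $\tau$-derivatives bring down factors $\partial_\tau g(X_{t_{i-1}},\tau)$, whose polynomial growth is controlled by [{\bf I6}], so the moment bounds (hence the orders $O_P(n^{-1/2})$) survive after taking $\sup_\tau$; the moment conditions [{\bf I1}]--[{\bf I3}] handle the remaining factors. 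This is where the regularity of $g$ and the Sobolev embedding of $G$ are genuinely used.

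Finally, with [{\bf B1}]--[{\bf B3}] and [{\bf A3}]--[{\bf A4}] in hand, Theorem~\ref{thmB} yields (\ref{ex2hatu}). For (\ref{ex2hatv_n}) I would verify [{\bf A5}] for $\bbV_n$: because $\Gamma(\theta^*)$ is a.s.\ positive definite, the exponent $\Delta_n(\theta^*,\hat\tau_n)[u]-\tfrac12\Gamma(\theta^*)[u^{\otimes2}]$ of $\bbV_n$ is a.s.\ strictly concave, so on the convex set $U$ (which contains the origin) it has a unique maximizer $\hat v_n$, namely the $\Gamma(\theta^*)$-orthogonal projection of $\Gamma(\theta^*)^{-1}\Delta_n(\theta^*,\hat\tau_n)$ onto $U$. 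Tightness of $\{\hat v_n\}$ then follows from the tightness of $\Delta_n(\theta^*,\hat\tau_n)=a_n^\prime\partial_\theta\widetilde{\mathbb{\Psi}}_n(\theta^*,\hat\tau_n)$, which the uniform score estimate (\ref{B2(i)}) together with (\ref{B2(ii)}) already guarantees regardless of $\hat\tau_n$. Theorem~\ref{thmB} then delivers $\hat u_n-\hat v_n=o_P(1)$, completing the argument.
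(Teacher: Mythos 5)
Your proposal is correct and takes essentially the same route as the paper: both arguments reduce the statement to the estimates already established for $\mathbb{\Psi}_n$ in the proof of Theorem \ref{ex2thm} (Lemmas \ref{ex2lemDelta}--\ref{ex2lemY}) by showing that the drift-correction terms produced by $hg(X_{t_{i-1}},\tau)$ are negligible uniformly in $\tau$ --- using the moment bounds from [{\bf I1}]--[{\bf I4}] and [{\bf I6}] together with the Sobolev embedding $W^{1,p}(\mathrm{G})\hookrightarrow C(\overline{\mathrm{G}})$ --- and then invoke Theorem \ref{thmB}, with [{\bf A5}] secured by the a.s.\ positive definiteness of $\Gamma(\theta^*)$. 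Your explicit decomposition $\widetilde{\mathbb{\Psi}}_n=\mathbb{\Psi}_n+R_n$ is just a cleaner packaging of the paper's direct expansion of $(\Delta_iY-hg(X_{t_{i-1}},\tau))^{\otimes2}$ carried out as in (\ref{SDEsame}).
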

 	The proof  is written in Section \ref{section6}.
 	\begin{rem}\label{remIto}
 		{\rm \bd 
 			\im[(i)]{\colorr Even if we do not assume (\ref{calnIto}) and the form of $\cala$ changes, we could  obtain the same results by deriving the corresponding $U$.}
 			\im[(ii)]Consider the case where  $T \to \infty$ and the ergodicity of $X$ holds.  Let $(\hat \theta_n, \hat \tau_n)$ be the joint maximizer of $\widetilde{\mathbb{\Psi}}$.
 		If there exists the true value $\gamma^* \in {\rm \ol G}$ and if  $nh^2 \to 0$, then under suitable assumptions, we also obtain the weak  convergence  of $\hat w_n := \sqrt{nh}(\hat \tau_n-\gamma^*)$. Given this background,  we use
 		$\widetilde {\mathbb{\Psi}}_n$ as the estimation function rather than  $\mathbb{\Psi}_n$ here. To obtain the joint convergence of  $(\hat u_n, \hat w_n)$, it is sufficient to drive the convergence of $(\hat v_n,  \hat w_n)$.
 	\ed}
 		\end{rem}


\section{Penalized quasi-maximum likelihood estimation}
\subsection{Settings}
In this section, we will apply Theorem \ref{thm1} to   a penalized quasi-likelihood function in a regular case except that the true value $\theta^*$ may lie on the boundary.
 Consider the same situation as in Section 2.1. 
 	Suppose that  $\Theta$ and $\calt$ are compact in $\bbR^\sfp$ and $\bbR^\sfq$, respectively.
 Consider the following penalty term. 
 \beas
 \sum_{j\in \calj}\xi_{j, T} p_j(\theta_j)+s_T(\tau) \qquad\big((\theta_1, ..., \theta_\sfp) \in \Theta, \tau \in \calt\big),
 \eeas 
 where  $s_T : \Omega\times \bbR^\sfq \to [0, \infty)$ is a continuous random field depending on $T \in \bbT$,  $\calj$ is a subset of $\{1, ..., \sfp\}$,  $\xi_{j, T} : \Omega \to [0, \infty)$ $(j \in \calj)$ are random variables depending on $T\in \bbT$,
 and $p_j : \bbR \to [0, \infty)$ $(j \in \calj)$ are deterministic continuous functions satisfying the following conditions.
 \bd
 \im[(i)]  $p_{j}\big|_{\bbR\setminus\{0\}}$ is of class $C^1$
 
 \im[(ii)] For any  $x \in \bbR$, $p_{j}(x)=0$ if and only if $x=0$.

 \im[(iii)] There exist some  positive constants $\delta_0$ and {$q_j$}   such that
 \beas
 p_{j}(x) =|x|^{q_j} \qquad\big(x \in [-\delta_0, \delta_0]\big).
 \eeas
 
 \ed
 We consider penalized quasi-maximum likelihood estimation  and the random field $\bbH_T$ is given by 
 \begin{eqnarray*}
 	\bbH_T(\theta, \tau)=\calh_T(\theta, \tau)-\sum_{j\in \calj}\xi_{j, T} p_j(\theta_j)-s_T(\tau) \qquad\big((\theta, \tau)\in \Xi\big),
 \end{eqnarray*}
 where for each $T\in\bbT$, $\calh_T: \Omega\times \Xi\rightarrow \bbR$ is the continuous random field corresponding to the given quasi-log likelihood function.  Recall that for each $T \in \bbT$, $\hat \tau_T$ is a given $\calt$-random variable and $\hat \theta_T$ is a $\Theta$-valued random variable that asymptotically maximizes $\bbH_T(\cdot, \hat\tau_T)$ on $\Theta$. 
 
 For each $j=1, ..., \sfp$, we denote by $\theta^*_j$ the $j$-th component of $\theta^*$. We decompose $(\theta^*_j)_{j \in \calj}$ as 
 \begin{eqnarray*}
 	\theta^*_{i}\neq 0    ~~and~~\theta^*_{k}=0\qquad(i\in \calj_1, k\in\calj_0),
 \end{eqnarray*}
 where $\calj_1$ and $\calj_0$ are two partitions of $\calj$. 

 \subsection{Asymptotic behavior of the PQMLE}
 Let $\frak a_{j, T}$ $(j=1, ..., \sfp)$ be  positive numbers depending on  $T \in \bbT$ satisfying that $\frak a_{j, T} \to 0$. Define a $\sfp \times \sfp$ matrix $\frak a_T$ as  $\frak a_T={\rm diag}(\frak a_{1, T}, ..., \frak a_{\sfp, T})$.  
 Define a positive sequence $\frak b_T$ as 
 \beas \frak b_T=\lambda_{\rm min}\big[(\frak a_T^{\prime }\frak a_T)^{-1}\big]=\min_{j \in \calj}\frak a_{j, T}^{-2}.
 \eeas 
 Define a continuous random field $\caly_T : \Omega\times\Xi\rightarrow\bbR$ as 
 \beas\caly_T(\theta, \tau)=\frac{1}{\frak b_T}\big(\calh_T(\theta, \tau)-\calh_T(\theta^*, \tau)\big)
 \qquad\big((\theta, \tau)\in\Xi\big).\eeas
 Let $\caly: \Omega\times\Theta\rightarrow \bbR$ be a  continuous random field, and $\caln$ a bounded open set in $\bbR^{\sfp}$ satisfying Conditions {\bf (i)} and {\bf (ii)} given in Section \ref{Quasisection1.1}.
 We suppose that $\calh_T$ can be extended to a continuous random field defined on $\Omega\times \ol \caln\times\calt$  satisfying that for every $\omega\in\Omega$ and $\tau\in\calt$, $\calh_T(\omega, \cdot, \tau)$ is of class $C^2(\ol \caln)$. Let ${\Delta}({\theta^*})$ be an $\bbR^{\sfp}$-valued random variable, and let $ \Gamma(\theta^*)$ be a $\calg$-measurable  $ \bbR^{\sfp}\otimes\bbR^{\sfp}$-valued random variable, where $\calg \subset \calf$ is a sub-$\sigma$-field. Let $\rho_k \geq 1$ $(k \in \calj_0)$ be positive numbers.
 Also, let $c_i : \Omega \to [0, \infty)$ $(i \in \calj_1)$ and $d_k : \Omega \to {[0, \infty)}$ $(k \in \calj_0)$  non-negative random variables.

For [{\bf A1}] and [{\bf A2}], we consider the following conditions.

 \bd
 \im[${\bf [C1]}$]
 $\ds \sup_{(\theta, \tau)\in\Xi}|\caly_T(\theta, \tau)-\caly(\theta)|\overset{P}{\rightarrow}0.$ Also, with probability $1$, for any $\theta \in \Theta$ with $\theta\neq \theta^*$,
 \beas\caly(\theta) < 0.
 \eeas 

 \im[${\bf [C2]}$] For some $\tau_0 \in \calt$,
 \bea
 \sup_{\tau \in \calt}\big|\frak a_T\partial_\theta\calh_T(\theta^*, \tau)-\frak a_T\partial_\theta\calh_T(\theta^*, \tau_0)\big|&\overset{P}{\to}& 0,\label{C2(i)}\\
 	\big(\frak a_T\partial_\theta\calh_T(\theta^*, \tau_0), ~  (\frak a_{i, T}\xi_{i, T})_{i \in \calj_1}, ~(\frak a_{k, T}^{q_k \rho_k}\xi_{k, T})_{k \in \calj_0} \big) &\overset{d_s(\calg)}{\rightarrow}&\bigg(\Delta({\theta^*}), (c_i)_{i \in \calj_1}, (d_k)_{k \in \calj_0} \bigg),\label{C2(ii)}
 	\eea  in $\bbR^\sfp \times \bbR^{|\calj_1|} \times \bbR^{|\calj_0|}$.
 	Also, for any positive sequence $\delta_T$ with $\delta_T\rightarrow0$,
 	\beas\sup_{\substack{(\theta, \tau)\in \ol\caln\times\calt\\|\theta-\theta^*|<\delta_T}}\big\|\frak a_T\partial^2_\theta\calh_T(\theta, \tau)\frak a_T+ \Gamma(\theta^*) \big\|\overset{P}{\rightarrow}0.
 	\eeas

\im[${\bf [C3]}$]  $\Gamma(\theta^*)$ is almost surely positive definite.

 \im[${\bf [C4]}$]
 For each $i \in \calj_1$, 
 \beas
 \xi_{i, T} \frak b_T^{-1}\overset{P}{\to} 0.
 \eeas

\im[${\bf [C5]}$]
For any $k \in \calj_0$, if $\rho_k >1$, then 
$\ds d_k > 0~~a.s.$

 \ed
  Take $a_T \in GL(\sfp)$ as a deterministic diagonal matrix  defined by \bea
 (a_T)_{jj}=\left\{\begin{array}{ll} 	\frak a_{j, T} &\big(j \in \{1, ..., \sfp\}\setminus\calj_0\big) \\ \frak a_{j, T}^{\rho_j}&(j\in \calj_0) 
 \end{array}\right.. \label{a_T}
 \eea  
Also, define a diagonal $\sfp \times \sfp $ matrix $b$ as
\begin{eqnarray*}
	(b)_{jj}=\lim_{T \to \infty}\frac{(a_T)_{jj}}{(\frak a_T)_{jj}} =\left\{\begin{array}{ll}1 &\big(j \in \{1, ..., \sfp\}\setminus\calj_0\big)\\  1_{\{\rho_j=1\}}& (j\in\calj_0)\end{array}\right..
\end{eqnarray*} 
Recall that $U_T$ and $U$ are defined by (\ref{U_T}) and (\ref{U}), respectively and that  $\hat{u}_T$ is defined  by $\hat{u}_T= (a_T)^{-1}(\hat{\theta}_T - \theta^*)$. 
 
 \begin{theorem}\label{thmpenalized}
 	Assume {\rm [{\bf C1}]-[{\bf C5}]} and $[{\bf A3}]$. Also, assume $[{\bf A4}]$  for the continuous random field $\bbZ$ defined as for any $u \in \bbR^\sfp$,
 	\begin{eqnarray}
 	\bbZ(u)&=&\exp\bigg\{\Delta(\theta^*)[bu]-\frac{1}{2} \Gamma(\theta^*)\big[{(bu)}^{\otimes2}\big]-\sum_{i \in \calj_1}c_i\frac{d}{dx}p_i (\theta^*_i)u_i-\sum_{k \in \calj_0}d_k|u_k|^{q_k}\bigg\}\label{thmpenalizedcalz}.
 	\end{eqnarray}
 	Then  for the $U$-valued random variable $\hat u$ defined in $[{\bf A4}]$, 
 	\beas
 	\hat u_T \overset{d_s(\calg)}{\to} \hat u.
 	\eeas
 	Moreover, assume $[{\bf A5}]$ for the continuous random field $\bbV_T$ defined as
 	\begin{eqnarray}
 	\bbV_T(u)&=&\exp\bigg\{ \Delta_T(\theta^*, \hat\tau_T)[bu]-\frac{1}{2} \Gamma(\theta^*)\big[{(bu)}^{\otimes2}\big]-\sum_{i \in \calj_1}\frak a_{i, T}\xi_{i, T}\frac{d}{dx}p_i (\theta^*_i)u_i\nonumber\\
 	&&-\sum_{k \in \calj_0}\frak a_{k, T}^{{q_k\rho_k}}\xi_{k, T} |u_k|^{q_k}\bigg\}
 	\qquad(u \in \bbR^\sfp),\label{thmpenalizedcalv_T}
 	\end{eqnarray}
 	where $\Delta_T(\theta^*, \cdot)$ represents $\frak a_T\partial_\theta\calh_T(\theta^*, \cdot)$.
 	Then for the $U$-valued random variable $\hat v_T$ defined in $[{\bf A5}]$, 
 	\beas
 	\hat u_T-\hat v_T =o_P(1).
 	\eeas
 \end{theorem}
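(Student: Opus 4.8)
The plan is to deduce the result from Theorem \ref{thm1} in the same way Theorem \ref{thmB} was obtained: since [{\bf A3}], [{\bf A4}] and (for the second assertion) [{\bf A5}] are assumed, it suffices to show that [{\bf C1}]--[{\bf C5}] imply [{\bf A1}] and [{\bf A2}] with $\bbZ$ and $\bbV_T$ as in (\ref{thmpenalizedcalz}) and (\ref{thmpenalizedcalv_T}). Writing $\theta=\theta^*+a_Tu$, the term $s_T(\tau)$ cancels, leaving
\beas
\log\bbZ_T(u,\tau)=\big(\calh_T(\theta^*+a_Tu,\tau)-\calh_T(\theta^*,\tau)\big)-\sum_{j\in\calj}\xi_{j,T}\big(p_j(\theta^*_j+(a_T)_{jj}u_j)-p_j(\theta^*_j)\big).
\eeas
First I would Taylor-expand $\calh_T$ to second order about $\theta^*$ and use the factorization $a_T=\frak a_T D_T$, where $D_T={\rm diag}\big((a_T)_{jj}/(\frak a_T)_{jj}\big)\to b$: the linear part becomes $\Delta_T(\theta^*,\tau)[D_Tu]$ and the quadratic part becomes $\tfrac12\big(D_T(\frak a_T\partial_\theta^2\calh_T\frak a_T)D_T\big)[u^{\otimes2}]$, which by the Hessian part of [{\bf C2}] tends to $-\tfrac12\Gamma(\theta^*)[(bu)^{\otimes2}]$ (here $b$ is a $0/1$ diagonal matrix, so $\Gamma(\theta^*)[(bu)^{\otimes2}]=(b\Gamma(\theta^*)b)[u^{\otimes2}]$). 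For the penalties I would split $\calj=\calj_1\cup\calj_0$: for $i\in\calj_1$ the argument $\theta^*_i+\frak a_{i,T}u_i$ stays near $\theta^*_i\neq0$ where $p_i$ is $C^1$, producing $\frak a_{i,T}\xi_{i,T}\tfrac{d}{dx}p_i(\theta^*_i)u_i$ plus a negligible remainder, while for $k\in\calj_0$ the identity $p_k(x)=|x|^{q_k}$ near $0$ yields the exact term $\frak a_{k,T}^{q_k\rho_k}\xi_{k,T}|u_k|^{q_k}$. These are precisely the building blocks of $\bbV_T$.

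To verify [{\bf A2}], I would note that on the bounded set $U_T(R)\times\calt$ all these expansions are uniform and, by the first line of [{\bf C2}], the $\tau$-dependence of the linear part disappears uniformly; hence $\sup_{U_T(R)\times\calt}|\bbZ_T(u,\tau)-\bbV_T(u)|\overset{P}{\to}0$. The functional convergence $\bbV_T\overset{d_s(\calg)}{\to}\bbZ$ in $C(\overline{B_R})$ then follows by expressing $\bbV_T$ as a jointly continuous function of the finite-dimensional vector $\big(\Delta_T(\theta^*,\hat\tau_T),(\frak a_{i,T}\xi_{i,T})_{i\in\calj_1},(\frak a_{k,T}^{q_k\rho_k}\xi_{k,T})_{k\in\calj_0}\big)$ and the $\calg$-measurable $\Gamma(\theta^*)$; replacing $\hat\tau_T$ by $\tau_0$ via [{\bf C2}], the joint stable convergence (\ref{C2(ii)}) and the continuous-mapping theorem for stable convergence give the claim.

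For [{\bf A1}] I would reuse the two-region decomposition from the proof of Theorem \ref{thmB}, splitting $\{|u|\ge R\}$ into $\{|a_Tu|\ge\delta_T\}$ and $\{|u|\ge R,\ |a_Tu|<\delta_T\}$ for an appropriate $\delta_T\down0$. On the far region, bounding $-\xi_{i,T}(p_i(\theta_i)-p_i(\theta^*_i))\le\xi_{i,T}p_i(\theta^*_i)$ and discarding the nonnegative $\calj_0$-penalty gives $\frak b_T^{-1}\log\bbZ_T(u,\tau)\le\caly_T(\theta,\tau)+\frak b_T^{-1}\sum_{i\in\calj_1}\xi_{i,T}p_i(\theta^*_i)$; by [{\bf C1}] the first summand has a strictly negative limit and by [{\bf C4}] the second tends to $0$, so the far region contributes no probability. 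On the moderate region the quadratic approximation applies, and $\tfrac12\Gamma(\theta^*)[(bu)^{\otimes2}]\ge\tfrac12\lambda_{\min}[\Gamma(\theta^*)]\,|bu|^2$, positive by [{\bf C3}], controls every coordinate with $b_{jj}=1$; the surviving coordinates $k\in\calj_0$ with $\rho_k>1$ (for which $b_{kk}=0$) are instead controlled by the penalty terms $\frak a_{k,T}^{q_k\rho_k}\xi_{k,T}|u_k|^{q_k}\to d_k|u_k|^{q_k}$ with $d_k>0$ almost surely by [{\bf C5}]. Letting $R\to\infty$ then sends the exponent to $-\infty$ uniformly, which is [{\bf A1}].

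The step I expect to be the main obstacle is the moderate region of [{\bf A1}] along the degenerate directions $k\in\calj_0$ with $\rho_k>1$: there $b$ annihilates the coordinate, so $\Gamma(\theta^*)$ supplies no coercivity, and the linear and mixed second-order contributions (whose coordinate $u_k$ may range up to $\delta_T\frak a_{k,T}^{-\rho_k}$) must be dominated uniformly by the sub-quadratic penalty $d_k|u_k|^{q_k}$. Controlling this balance is exactly what requires the positivity of $d_k$ in [{\bf C5}] and the tailored scaling $a_T$ in (\ref{a_T}); the remaining estimates are routine extensions of the QMLE computation in the proof of Theorem \ref{thmB}.
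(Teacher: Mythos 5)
Your overall route is the same as the paper's: reduce to verifying [{\bf A1}] and [{\bf A2}] via Theorem \ref{thm1}, Taylor-expand $\calh_T$ around $\theta^*$ with the factorization $a_T=\frak a_T\,(\frak a_T^{-1}a_T)$, split the penalty over $\calj_1/\calj_0$, verify [{\bf A2}] by uniform approximation on bounded sets plus the joint stable convergence (\ref{C2(ii)}), and prove [{\bf A1}] by the far/moderate two-region decomposition, with the far region handled exactly as in the paper via [{\bf C1}] and [{\bf C4}]. The gap is precisely the step you flag as "the main obstacle," and the way you propose to close it is the wrong comparison. You require that the linear and mixed second-order contributions of a degenerate coordinate $k\in\calj_0$ with $\rho_k>1$ be dominated uniformly by the penalty $d_k|u_k|^{q_k}$. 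This domination can genuinely fail: on the moderate region $|u_k|$ ranges up to $\delta_T\frak a_{k,T}^{-\rho_k}$, while the linear contribution of that coordinate is $O_P(1)\,\frak a_{k,T}^{\rho_k-1}|u_k|$, so the ratio to the penalty is of order $\frak a_{k,T}^{\rho_k-1}|u_k|^{1-q_k}\leq \delta_T^{1-q_k}\frak a_{k,T}^{q_k\rho_k-1}$. Whenever $q_k\rho_k<1$ this diverges unless $\delta_T$ decays like a positive power of $\frak a_{k,T}$ --- but $\delta_T$ cannot be forced to decay that fast, because its rate is dictated by the (possibly arbitrarily slow) uniform convergence in [{\bf C1}] needed for the far region. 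Note that $q_k\rho_k<1$ is not exotic: in the linear mixed model of Section \ref{linearmixedmodel}, $q_k\rho_k=q\vee r$, which is strictly less than $1$ whenever $q<r<1$.

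The paper closes the moderate region without ever comparing the linear terms to the penalty. The key observation is that the linear and quadratic parts (mixed terms included) depend on $u$ only through the composite variable $v:=\frak a_T^{-1}a_Tu$: on the event $\big\{\lambda_{\rm min}\big[\Gamma(\theta^*)\big]\geq\epsilon\big\}\cap\big\{\frak a_{k,T}^{q_k\rho_k}\xi_{k,T}\geq\epsilon,\ k\in\calj_0,\ \rho_k>1\big\}$, the exponent of $\bbZ_T(u,\tau)$ is at most $O_P(1)|v|-\big(\tfrac{\epsilon}{2}+o_P(1)\big)|v|^2-\epsilon\sum_{k\in\calj_0}|u_k|^{q_k}1_{\{\rho_k>1\}}$. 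The first two terms are simultaneously bounded above by $\sup_{x\geq0}\big\{O_P(1)x-(\tfrac{\epsilon}{2}+o_P(1))x^2\big\}=O_P(1)$ and tend to $-\infty$ when $|v|$ is large; since $|u|\leq|v|+\sum_{k\in\calj_0}|u_k|1_{\{\rho_k>1\}}$, the constraint $|u|\geq R$ forces either $|v|\geq R/2$ (the quadratic kills the exponent) or $\sum_{k\in\calj_0,\rho_k>1}|u_k|\geq R/2$ (the penalty kills it, the rest being $O_P(1)$). Letting $R\to\infty$ and then $\epsilon\to0$, with [{\bf C3}] and [{\bf C5}] controlling the exceptional events, yields [{\bf A1}]. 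With this replacement of your moderate-region argument, the rest of your proposal matches the paper's proof.
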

 
\begin{remark}  {{\rm Similarly as  Remark \ref{remarkB}, under  [{\bf C1}] and [{\bf C4}], the following condition holds.
 			\bd
 			\im[${\bf [C1]^{\flat}}$] For any $\delta>0$, \beas 
 			\varlimsup_{T \to \infty}P\bigg[\sup_{\substack{(\theta, \tau)\in \Theta\times\calt\\|\theta-\theta^*|\geq \delta}}\big\{\bbH_T(\theta, \tau)-\bbH_T(\theta^*, \tau)\big\} \geq 0\bigg] \yeq 0.
 			\eeas
 			\ed
 			Then Theorem \ref{thmpenalized} holds even if we substitute [{\bf C1}] and [{\bf C4}] with $[{\bf C1}]^{\flat}$. (See the following proof.)}  }
 	
 	\end{remark}

\begin{proof}[Proof of Theorem \ref{thmpenalized}]
	From Theorem \ref{thm1}, it suffices to show that [{\bf C1}]-[{\bf C5}] imply [{\bf A1}] and [{\bf A2}]. Define  $r_T$ as 
	\begin{eqnarray*}
		r_T(u, \tau)&=&-2\int_0^1(1-k)\big\{\frak a_T \partial^2_\theta\calh_T(\theta^*+ka_Tu, \tau)\frak a_T+\Gamma(\theta^*)\big\}dk \\ 		
	\end{eqnarray*} for any $(u, \tau) \in U_T\times\calt$.
	Then,  from Taylor's series, for any $(u, \tau) \in U_T\times\calt$,
	\begin{eqnarray}
		\bbZ_T(u, \tau)&=&\exp\bigg\{\Delta_T(\theta^*, \tau)[\frak a_T^{-1} a_Tu]-\frac{1}{2}\big(\Gamma(\theta^*)+r_T(u, \tau)\big)[(\frak a_T^{-1} a_T u)^{\otimes2}]\nonumber\\ &&-\sum_{i \in \calj_1}\xi_{i, T}\big( p_i(\theta^*_i+\frak a_{i,T}u_i)-p_i(\theta^*_i) \big)-\sum_{i \in \calj_0}\xi_{k, T} p_k(\frak a_{k,T}^{\rho_k}u_k)\bigg\}. \label{thmpenalizedTaylor}
	\end{eqnarray}
	Note that from [{\bf C2}], for any positive sequence $\delta_T$ with $\delta_T\to0$,
	\beas
	&&\sup_{(u, \tau)\in U_T\times \calt, |a_Tu|<\delta_T}\big\|r_T(u, \tau)\big\|=o_P(1),\\
		&&\sup_{u\in U_T, |a_Tu|<\delta_T}\bigg|\sum_{i \in \calj_1}\xi_{i, T}\big( p_i(\theta^*_i+\frak a_{i,T}u_i)-p_i(\theta^*_i)\big) -\sum_{i \in \calj_1}\frak a_{i, T}\xi_{i, T}\frac{d}{dx}p_i (\theta^*_i)u_i\bigg|=o_P(1),\\
&&\sup_{u\in U_T, |a_Tu|<\delta_T}\bigg|\sum_{k \in \calj_0}\xi_{k, T}p_k(\frak a_{k, T}^{\rho_k} u_k)-\sum_{k \in \calj_0}\frak a_{k, T}^{{q_k\rho_k}}\xi_{k, T} |u_k|^{q_k}\bigg|\to 0.
	\eeas
	Therefore, from [{\bf C2}], [{\bf A2}] obviously holds if we take $\bbZ$ and $\bbV_T$ as (\ref{thmpenalizedcalz}) and (\ref{thmpenalizedcalv_T}), respectively.
	
	We show that $[{\bf A1}]$ also  holds.  Let  $\epsilon$ and $R$ be positive numbers. Besides, let $\delta_T$ be a positive sequence with $\delta_T\to0$. Then
	\bea&&\varlimsup_{T\rightarrow \infty}
	P\bigg[\sup_{U_T\times \calt,  |u| \geq R} \mathbb{Z}_T(u, \tau)  \geq 1 \bigg]\nonumber\\
	&\leq&\varlimsup_{T\rightarrow \infty}
	P\bigg[\sup_{U_T\times\calt,  |a_T u|\geq \delta_T} \mathbb{Z}_T(u, \tau)  \geq 1 \bigg]
	+
	\varlimsup_{T\rightarrow \infty}
	P\bigg[\sup_{\substack{U_T\times\calt,  |u| \geq R\\|a_T u|<\delta_T}} \mathbb{Z}_T(u, \tau)  \geq 1 \bigg].\label{Bright} \eea
	From [{\bf C1}] and [{\bf C4}], we can evaluate the first term in (\ref{Bright}) as
	\bea
	&&\varlimsup_{T\rightarrow \infty}
	P\bigg[\sup_{U_T\times\calt,  |a_T u|\geq \delta_T} \mathbb{Z}_T(u, \tau)  \geq 1 \bigg]\nonumber\\
	&=&
	\varlimsup_{T\rightarrow \infty}
	P\bigg[\sup_{\Theta\times\calt,  |\theta-\theta^*|\geq \delta_T} \bigg\{\bbH_T(\theta, \tau)-\bbH_T(\theta^*, \tau)\bigg\}  \geq 0 \bigg]\label{C1flat}\\
	&\leq&
	\varlimsup_{T\rightarrow \infty}
	P\bigg[\sup_{\Theta\times\calt,  |\theta-\theta^*|\geq \delta_T} \bigg\{\caly(\theta)+\caly_T(\theta, \tau)-\caly(\theta)+\frak b_T^{-1}\sum_{i \in \calj_1}\xi_ip_i(\theta^*_i)\bigg\}  \geq 0 \bigg]\nonumber\\
	&\leq&
	\varlimsup_{T\rightarrow \infty}
	P\bigg[\sup_{\Theta\times\calt,  |\theta-\theta^*|\geq \delta_T}\caly(\theta)+o_P(1)  \geq 0 \bigg],\nonumber
	\eea where $o_P(1)$ satisfies $\sup_{\Theta \times \calt} \big|o_P(1)\big| \to^P 0$.
	(When substituting [{\bf C1}] and [{\bf C4}] with $[{\bf C1}]^{\flat}$, stop at (\ref{C1flat}).) Thus,  by  choosing  $\delta_T$ properly and using [{\bf C1}] (or $[{\bf C1}]^{\flat}$), we have
	\beas
	\varlimsup_{T\rightarrow \infty}
	P\bigg[\sup_{U_T\times\calt,  |a_T u|\geq \delta_T} \mathbb{Z}_T(u, \tau)  \geq 1 \bigg]\yeq 0.
	\eeas
	
	We evaluate the second term in (\ref{Bright}). Let $\epsilon>0$. From (\ref{thmpenalizedTaylor}) and [{\bf C2}], 
  for any $(u, \tau) \in U_T\times \calt$ with $|a_Tu|< \delta_T$ and for any $\omega\in\Omega$ belonging to a set
	 \beas
	 \bigg\{\frak a_{k,T}^{q_k\rho_k}\xi_{k, T} \geq \epsilon~( k\in\calj_0,  \rho_k>1)\bigg\}\cap \bigg\{\lambda_{\min}\big[\Gamma(\theta^*)\big]\geq \epsilon\bigg\},\eeas 
\begin{eqnarray}
	\bbZ_T(u, \tau)\nonumber&= &\exp\bigg\{O_P(1)\big|\frak a_T^{-1} a_Tu\big|-\frac{1}{2}\big(\Gamma(\theta^*)+o_P(1)\big)\big[(\frak a_T^{-1} a_T u)^{\otimes 2}\big]\nonumber\\&&-\sum_{i \in \calj_1}O_P(1)u_i-\sum_{k \in \calj_0}\frak a_{k,T}^{q_k\rho_k}\xi_{k, T} |u_k|^{q_k}\bigg\}\nonumber\\
	&= &\exp\bigg\{O_P(1)\big|\frak a_T^{-1} a_Tu\big|-\frac{1}{2}\big(\Gamma(\theta^*)+o_P(1)\big)\big[(\frak a_T^{-1} a_T u)^{\otimes 2}\big]-\sum_{k \in \calj_0}\frak a_{k,T}^{q_k\rho_k}\xi_{k, T} |u_k|^{q_k}\bigg\}\label{thmpenalizedasymptotic}\\
	&&
	\bigg(\because \,O_P(1)\big|\frak a_T^{-1} a_Tu\big|-\sum_{i \in \calj_1}O_P(1)u_i=O_P(1)\big|\frak a_T^{-1} a_Tu\big|\bigg)\nonumber\\
	&\leq& \exp\bigg\{O_P(1)\big|\frak a_T^{-1} a_Tu\big|-\bigg(\frac{\epsilon}{2}+o_P(1)\bigg)\big|\frak a_T^{-1} a_T u\big|^2\nonumber-\epsilon \sum_{k \in \calj_0}\frak  |u_k|^{q_k}1_{\{\rho_k>1\}}\bigg\}\nonumber\\
	&\leq&\left\{\begin{array}{lll}\exp\big\{O_P(1)\big|\frak a_T^{-1} a_Tu\big|-\big(\frac{\epsilon}{2}+o_P(1)\big)\big|\frak a_T^{-1} a_T u\big|^2\big\}\\
		\\
		\exp\big\{O_P(1)-\epsilon \sum_{k \in \calj_0} |u_k|^{q_k}1_{\{\rho_k>1\}} \big\}\end{array}\right.,\label{thmpenalizedA1}
\end{eqnarray}
where we use  the evaluation $\sup_{u\in\bbR^\sfp}\big\{O_P(1)\big|\frak a_T^{-1} a_Tu\big|-\big(\frac{\epsilon}{2}+o_P(1)\big)\big|\frak a_T^{-1} a_T u\big|^2\big\}=O_P(1)$.
	Since $|u|\leq |\frak a_T^{-1}a_T u|+\sum_{k \in \calj_0} |u_k|1_{\{\rho_k>1\}}$, (\ref{thmpenalizedA1}) converges to $0$ as $|u| \to \infty$. Therefore, we have
	\begin{eqnarray*}
		&&\varlimsup_{T\rightarrow \infty}
		P\bigg[\sup_{\substack{U_T\times\calt,  |u| \geq R\\|a_T u|<\delta_T}} \mathbb{Z}_T(u, \tau)  \geq 1 \bigg] \\
		&\leq & 
		\varlimsup_{T\rightarrow \infty}
		P\bigg[\sup_{\substack{U_T\times\calt,  |u| \geq R\\|a_T u|<\delta_T}} \mathbb{Z}_T(u, \tau)  \geq 1,  	\frak a_{k,T}^{q_k\rho_k}\xi_{k, T} \geq \epsilon ~(k\in\calj_0,  \rho_k>1),  ~\lambda_{\min}\big[\Gamma(\theta^*)\big]\geq \epsilon\bigg]\\ &&+\varlimsup_{T\rightarrow \infty}
		P\bigg[	\frak a_{k,T}^{q_k\rho_k}\xi_{k, T}\leq  \epsilon ~(k\in\calj_0,  \rho_k>1)\bigg] +P\bigg[\lambda_{\min}\big[\Gamma(\theta^*)\big]\leq \epsilon \bigg]\\
		&\overset{R\to\infty}{\to}&
		\varlimsup_{T\rightarrow \infty}
		P\bigg[	\frak a_{k,T}^{q_k\rho_k}\xi_{k, T}\leq  \epsilon~( k\in\calj_0,  \rho_k>1)\bigg] +P\bigg[\lambda_{\min}\big[\Gamma(\theta^*)\big]\leq \epsilon \bigg]\\
		&\leq&P\bigg[	d_k\leq  \epsilon ~( k\in\calj_0,  \rho_k>1)\bigg] +P\bigg[\lambda_{\min}\big[\Gamma(\theta^*)\big]\leq \epsilon \bigg]\\
		&\overset{\epsilon\rightarrow0}{\rightarrow}&0 \qquad\big(\because \text {[{\bf C3}] and [{\bf C5}]}\big).
	\end{eqnarray*}
	Thus, $[{\bf A1}]$ holds.
\end{proof}

\subsection{Selection consistency}

Denote by $\hat u_{j, T}$ the $j$-th component of $\hat u_T$  $(j=1, ..., \sfp)$.  When we can show that  $\hat u_{k, T} \overset{P}{\to} 0$ $(k\in \calj_0)$ using Theorem \ref{thmpenalized}, we can also show selection consistency under the following conditions [{\bf S1}] and [{\bf S2}]. Note that  for $u=(u_1, ..., u_\sfp) \in \bbR^\sfp$ and $v=(v_1, ..., v_\sfp) \in \bbR^\sfp$, we denote $(u_j)_{j \in \{1, ..., \sfp\}\setminus\calj_0}$,   $(u_j)_{j \in \calj_0}$,  $(v_j)_{j \in \{1, ..., \sfp\}\setminus\calj_0}$ and $(v_j)_{j \in \calj_0}$ by $\overline u$, $\underline u$,  $\overline v$ and $\underline v$, respectively. Define $\calj_{0, +} \subset \calj_0$ as
\beas
\calj_{0, +} \yeq \{k \in \calj_0 ; d_k >0 ~~a.s.\}.
\eeas
\bd
\im[{$\bf [S1]$}] $\calj_{0, +}$ is not empty, and for any $R>0$ and any positive sequence $\delta_T$ with $\delta_T\rightarrow 0$ as $T\rightarrow\infty$, 
\beas
\sup_{u\in S(R, \delta_T)}\inf_{v \in I(R)}\frac{|\overline u-\overline v|+|(u_k)_{k \in \calj_0, \rho_k=1}|}{\sum_{k \in \calj_{0, +}}|u_k|^{q_k}}\rightarrow 0 \qquad(T\rightarrow \infty),
\eeas
where \beas S(R, \delta_T)&=&\{u \in U_T ; |\overline u|\leq R, ~0<|\underline u|\leq\delta_T\}~~~and\\
I(R)&=&\{v\in U_T; |\overline v|\leq R, ~|\underline v|=0\}.
\eeas

\im[{$\bf[S2]$}] For any $k \in \calj_{0, +}$, $q_k  \leq 1$.
\ed

\begin{theorem}\label{Selection1}
	Assume that {\rm [{\bf C1}]-[{\bf C5}]}, {\rm [{\bf S1}]} and {\rm [{\bf S2}]} hold and that 
	\begin{eqnarray*}\hat u_{k, T} \overset{P}{\rightarrow} 0\qquad(k\in \calj_0).
	\end{eqnarray*}
	Then 
	\begin{eqnarray*}\lim_{T \to \infty}P\big[(\hat \theta_{k, T})_{k \in \calj_0}=0\big] = 1.
	\end{eqnarray*}
	
\end{theorem}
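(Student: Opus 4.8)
The plan is to recast selection consistency as a statement about the rescaled estimator. Since $\theta^*_k=0$ for $k\in\calj_0$, we have $\hat\theta_{k,T}=(a_T)_{kk}\hat u_{k,T}$ with $(a_T)_{kk}\neq0$, so $(\hat\theta_{k,T})_{k\in\calj_0}=0$ is equivalent to $\underline{\hat u}_T=0$; hence it suffices to show $P[\underline{\hat u}_T\neq0]\to0$. First I would reduce to a localized event. By Theorem \ref{thmpenalized}, $\hat u_T\overset{d_s(\calg)}{\to}\hat u$, so $\{\hat u_T\}$ is tight and for every $\epsilon>0$ there is $R$ with $\limsup_T P[|\overline{\hat u}_T|>R]<\epsilon$; by the hypothesis $\underline{\hat u}_T\overset{P}{\to}0$ there is a deterministic sequence $\delta_T\downarrow0$ with $P[|\underline{\hat u}_T|>\delta_T]\to0$. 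Since $\{\underline{\hat u}_T\neq0\}\subset\{\hat u_T\in S(R,\delta_T)\}\cup\{|\overline{\hat u}_T|>R\}\cup\{|\underline{\hat u}_T|>\delta_T\}$, the problem reduces to proving $\limsup_T P[\hat u_T\in S(R,\delta_T)]=0$ for each fixed $R$, after which $R\to\infty$ finishes the argument.

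The core is a comparison argument on the event $\{\hat u_T\in S(R,\delta_T)\}$. Note first that under [{\bf S1}] no point of $S(R,\delta_T)$ can have $\sum_{k\in\calj_{0,+}}|u_k|^{q_k}=0$ for large $T$: otherwise, since a nonzero $\underline u$ then forces a strictly positive numerator in [{\bf S1}] (recall $\calj_0\setminus\calj_{0,+}\subset\{k:\rho_k=1\}$ by [{\bf C5}]), the displayed ratio would be $+\infty$, contradicting its convergence to $0$. Hence $\sum_{k\in\calj_{0,+}}|\hat u_{k,T}|^{q_k}>0$ on this event. Using [{\bf S1}] I would pick $v_T\in I(R)\subset U_T$ nearly attaining the infimum, so that $\theta^*+a_Tv_T\in\Theta$ and $\underline{v_T}=0$, i.e. $v_T$ is a feasible competitor with vanishing $\calj_0$-coordinates. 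The Taylor representation \ref{thmpenalizedTaylor} then gives
\[
\log\bbZ_T(\hat u_T,\hat\tau_T)-\log\bbZ_T(v_T,\hat\tau_T)=D^{\mathrm{sm}}_T+D^{\calj_1}_T-\sum_{k\in\calj_0}\frak a_{k,T}^{q_k\rho_k}\xi_{k,T}|\hat u_{k,T}|^{q_k},
\]
where $D^{\mathrm{sm}}_T$ collects the quadratic (smooth-likelihood) increment and $D^{\calj_1}_T$ the $\calj_1$-penalty increment.

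Next I would bound the two increments from above. Since $\Delta_T(\theta^*,\hat\tau_T)$ and $\Gamma(\theta^*)+r_T$ are $O_P(1)$ and the arguments $\frak a_T^{-1}a_T\hat u_T,\frak a_T^{-1}a_Tv_T$ are bounded on the event, one gets $|D^{\mathrm{sm}}_T|\leq O_P(1)\,|\frak a_T^{-1}a_T(\hat u_T-v_T)|$ and $|D^{\calj_1}_T|\leq O_P(1)\,|\overline{\hat u}_T-\overline{v_T}|$, using $\frak a_{i,T}\xi_{i,T}=O_P(1)$ and the $C^1$ smoothness of $p_i$ at $\theta^*_i\neq0$. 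The coordinates of $\frak a_T^{-1}a_T(\hat u_T-v_T)$ are $\overline{\hat u}_T-\overline{v_T}$, the entries $\hat u_{k,T}$ for $k\in\calj_0,\rho_k=1$, and $\frak a_{k,T}^{\rho_k-1}\hat u_{k,T}$ for $k\in\calj_0,\rho_k>1$. The first two groups are $o\big(\sum_{k\in\calj_{0,+}}|\hat u_{k,T}|^{q_k}\big)$ by [{\bf S1}]; for the third, $\rho_k>1$ implies $k\in\calj_{0,+}$ (by [{\bf C5}]) and $q_k\leq1$ (by [{\bf S2}]), so $\frak a_{k,T}^{\rho_k-1}|\hat u_{k,T}|=\big(\frak a_{k,T}^{\rho_k-1}|\hat u_{k,T}|^{1-q_k}\big)|\hat u_{k,T}|^{q_k}=o\big(|\hat u_{k,T}|^{q_k}\big)$ uniformly, because $\frak a_{k,T}^{\rho_k-1}\delta_T^{1-q_k}\to0$. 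Thus $D^{\mathrm{sm}}_T+D^{\calj_1}_T\leq O_P(1)\,\eta_T\sum_{k\in\calj_{0,+}}|\hat u_{k,T}|^{q_k}$ for a deterministic $\eta_T\to0$.

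Finally I would bound the penalty gain from below: by \ref{C2(ii)}, $\frak a_{k,T}^{q_k\rho_k}\xi_{k,T}\overset{d_s(\calg)}{\to}d_k$ with $d_k>0$ a.s. for $k\in\calj_{0,+}$, so on an event of probability tending to $1$ one has $\frak a_{k,T}^{q_k\rho_k}\xi_{k,T}\geq c$ $(k\in\calj_{0,+})$ for some $c>0$, whence the penalty sum is $\geq c\sum_{k\in\calj_{0,+}}|\hat u_{k,T}|^{q_k}$. Combining the two estimates, on an event of probability tending to $1$,
\[
\log\bbZ_T(\hat u_T,\hat\tau_T)-\log\bbZ_T(v_T,\hat\tau_T)\leq\big(O_P(1)\,\eta_T-c\big)\sum_{k\in\calj_{0,+}}|\hat u_{k,T}|^{q_k}<0
\]
whenever $\hat u_T\in S(R,\delta_T)$. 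Since $v_T\in U_T$, i.e. $\theta^*+a_Tv_T\in\Theta$, this strictly contradicts the (asymptotic) maximality of $\hat\theta_T$, forcing $P[\hat u_T\in S(R,\delta_T)]\to0$, and letting $R\to\infty$ completes the proof. The main obstacle is the third step: dominating every smooth-cost increment --- in particular the $\rho_k>1$ block with its $\frak a_{k,T}^{\rho_k-1}$ scaling --- uniformly by the penalty gain $\sum_{k\in\calj_{0,+}}|\hat u_{k,T}|^{q_k}$; this is precisely the balance that [{\bf S1}], [{\bf S2}] and the scaling contrast between $a_T$ and $\frak a_T$ are engineered to secure.
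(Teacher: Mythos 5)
Your argument is essentially the paper's own proof in different packaging: the paper phrases the comparison as $\varlimsup_{T}P\big[\sup_{S(R,\delta_T)}\bbZ_T-\sup_{I(R)}\bbZ_T\geq 0\big]$ and then divides the expansion (\ref{thmpenalizedasymptotic}) by $\sum_{k\in\calj_{0,+}}|u_k|^{q_k}$, invoking [{\bf S1}] and [{\bf S2}] exactly as you do, whereas you plug in $u=\hat u_T$ together with a near-optimal competitor $v_T\in I(R)$ and derive a contradiction with maximality; the ingredients and their roles (the Taylor representation (\ref{thmpenalizedTaylor}), the penalty lower bound from [{\bf C2}]/[{\bf C5}], [{\bf S1}] for the $\overline u$-block and the $\rho_k=1$ coordinates, and [{\bf S2}] together with $\calj_0\setminus\calj_{0,+}\subset\{k:\rho_k=1\}$ for the $\rho_k>1$ block) are identical. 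One slip should be fixed: your localization step justifies tightness of $\hat u_T$ by citing Theorem \ref{thmpenalized}, but that theorem requires [{\bf A3}] and [{\bf A4}], which are not among the hypotheses of Theorem \ref{Selection1}. The paper's localization instead rests on [{\bf A1}], which is shown to follow from [{\bf C1}]--[{\bf C5}] alone in the proof of Theorem \ref{thmpenalized}; since $\calt$ is compact, [{\bf A1}] yields $\varlimsup_{R\to\infty}\varlimsup_{T\to\infty}P\big[|a_T^{-1}(\hat\theta_T-\theta^*)|\geq R\big]=0$ by Remark \ref{remark1} (i), which is exactly the bound you need, so the repair is purely a matter of citation. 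A smaller point: because $d_k$ is random with $d_k>0$ a.s., there is in general no single constant $c>0$ such that $\frak a_{k,T}^{q_k\rho_k}\xi_{k,T}\geq c$ with probability tending to one; the correct quantification is that for every $\epsilon>0$ one can choose $c=c(\epsilon)>0$ with $\varlimsup_T P\big[\min_{k\in\calj_{0,+}}\frak a_{k,T}^{q_k\rho_k}\xi_{k,T}<c\big]<\epsilon$, which is what the paper's iterated $\varlimsup_{\eta\to+0}\varlimsup_{R\to\infty}\varlimsup_{T\to\infty}$ accomplishes; your closing argument, which only needs $\limsup_T P[\hat u_T\in S(R,\delta_T)]\leq\epsilon$ for every $\epsilon$, absorbs this without structural change.
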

\begin{remark}
	{\rm Theorem \ref{Selection1} holds even if we  substitute [{\bf C1}] and [{\bf C4}] with $[{\bf C1}]^{\flat}$.}
\end{remark}
\begin{proof}[Proof of Theorem \ref{Selection1}]
	Take an arbitrary positive sequence $\epsilon_T$ with $\epsilon_T\rightarrow 0$ as $T\rightarrow\infty$.
	It suffices to show that \begin{eqnarray*}P\big[\big|(\hat u_{k, T})_{k \in \calj_0}\big|\geq \epsilon_T \big] \rightarrow 0.
	\end{eqnarray*}
	Since $\big|(\hat u_{k, T})_{k \in \calj_0}\big|=o_P(1)$, we can take a positive sequence $\delta_T$ with $\delta_T\rightarrow 0$ as $T\rightarrow\infty$ such that
	\beas
	\varlimsup_{T\rightarrow\infty}P\big[\big|(\hat u_{k, T})_{k \in \calj_0}\big|\geq \delta_T \big]=0.
	\eeas
		Then 
	\beas
	&&\varlimsup_{T\rightarrow\infty}P\big[\big|(\hat u_{k, T})_{k \in \calj_0}\big|\geq \epsilon_T \big]\\&\leq&
	\varlimsup_{T\rightarrow\infty}P\bigg[\sup_{\substack{u\in U_T, \tau\in\calt,\\  \delta_T \geq |\underline u|\geq\epsilon_T}}\bbZ_T(u, \tau)-\sup_{\substack{v\in U_T, \mu\in\calt, \\ \underline v=0}}\bbZ_T(v, \mu)\geq0\bigg] \\
	&=&\varlimsup_{R\rightarrow\infty}\varlimsup_{T\rightarrow\infty}
	P\bigg[\sup_{\substack{u\in S(R, \delta_T), \\\tau\in\calt, |\underline u|\geq\epsilon_T}}\bbZ_T(u, \tau)-\sup_{\substack{v\in I(R), \\ \mu\in\calt}}\bbZ_T(v, \mu)\geq0\bigg] \\
	&=&\varlimsup_{R\rightarrow\infty}\varlimsup_{T\rightarrow\infty}
	P\bigg[\sup_{\substack{u\in S(R, \delta_T), \\\tau\in\calt, |\underline u|\geq\epsilon_T}}\inf_{\substack{v\in I(R), \\ \mu\in\calt}}\big\{\bbZ_T(u, \tau)-\bbZ_T(v, \mu)\big\}\geq0\bigg] \\
	&=&\varlimsup_{R\rightarrow\infty}\varlimsup_{T\rightarrow\infty}
	P\bigg[\sup_{\substack{u\in S(R, \delta_T), \\\tau\in\calt, |\underline u|\geq\epsilon_T}}\inf_{\substack{v\in I(R), \\ \mu\in\calt}}\bigg\{O_P(1)\big(|\overline u-\overline v|+|(u_k)_{k \in \calj_0, \rho_k=1}|\big)+o_P(1)|(u_k)_{k \in \calj_0, \rho_k>1}|\\
	&&-\sum_{k\in \calj_0}\xi_{k, T}\frak a_{k, T}^{q_k\rho_k}|u_k|^{q_k}\bigg\}\geq0\bigg] \qquad\big(\because (\ref{thmpenalizedasymptotic})\big).
	\eeas
	 Since $\xi_{k, T}\frak a_{k, T}^{q_k\rho_k} \overset{d}{\to} d_k>0$ $(k \in \calj_{0, +})$ from [{\bf C2}], for any $k \in \calj_{0, +}$,
	\beas
	\varlimsup_{\eta \to +0}\varlimsup_{T \to \infty}P\big[\xi_{k, T}\frak a_{k, T}^{q_k\rho_k} \leq \eta\big]=0.
	\eeas
	Therefore,
	\beas
	&&\varlimsup_{T\rightarrow\infty}P\big[\big|(\hat u_{k, T})_{k \in \calj_0}\big|\geq \epsilon_T \big]\\&\leq&
	\varlimsup_{\eta \to +0}\varlimsup_{R\rightarrow\infty}\varlimsup_{T\rightarrow\infty}
	P\bigg[\sup_{\substack{u\in S(R, \delta_T), \\\tau\in\calt, |\underline u|\geq\epsilon_T}}\inf_{\substack{v\in I(R), \\ \mu\in\calt}}\big\{O_P(1)\big(|\overline u-\overline v|+|(u_k)_{k \in \calj_0, \rho_k=1}|\big)\\&&+o_P(1)|(u_k)_{k \in \calj_0, \rho_k>1}|-\eta\sum_{k\in \calj_{0, +}}|u_k|^{q_k}\big\}\geq0\bigg]\\
	&\leq&
	\varlimsup_{\eta \to +0}\varlimsup_{R\rightarrow\infty}\varlimsup_{T\rightarrow\infty}
	P\bigg[\sup_{\substack{u\in S(R, \delta_T), \\\tau\in\calt, |\underline u|\geq\epsilon_T}}\inf_{\substack{v\in I(R), \\ \mu\in\calt}}\big\{O_P(1)\frac{|\overline u-\overline v|+|(u_k)_{k \in \calj_0, \rho_k=1}|}{\sum_{k\in \calj_{0, +}}|u_k|^{q_k}}\\&&+o_P(1)\frac{|(u_k)_{k \in \calj_0, \rho_k>1}|}{\sum_{k\in \calj_{0, +}}|u_k|^{q_k}}-\eta\big\}\geq0\bigg].
	\eeas
	 Condition [{\bf S1}] implies that  for any $R>0$,
	\begin{eqnarray*}
		\sup_{\substack{u\in S(R, \delta_T), \\ |\underline u|\geq\epsilon_T}}\inf_{v\in I(R)}\frac{|\overline u-\overline v|+|(u_k)_{k \in \calj_0, \rho_k=1}|}{\sum_{k\in \calj_{0, +}}|u_k|^{q_k}}\rightarrow 0 \qquad(T\rightarrow\infty).
	\end{eqnarray*}
Also, since $\{k \in \calj_0 ; \rho_k >1\} \subset \calj_{0, +} $ from [{\bf C5}],  Condition [{\bf S2}] implies that  for any $R>0$,
\beas
\sup_{|u| \leq R }\frac{|(u_k)_{k \in \calj_0, \rho_k>1}|}{\sum_{k\in \calj_{0, +}}|u_k|^{q_k}} = O_P(1).
\eeas
	Thus, we have $\ds \varlimsup_{T\to\infty}P\big[\big|(\hat u_{k, T})_{k \in \calj_0}\big|\geq \epsilon_T \big] = 0$. Thus, Theorem  \ref{Selection1} holds.

\end{proof}
 
 The following example is a simple case where the selection consistency condition [{\bf S1}] holds. A more complex case is treated in Section \ref{linearmixedmodel}.

\begin{example}\label{exeasysparse}
	{\rm Take $\Theta$ as 
	\beas
	\Theta = \prod_{i=1}^\sfp I_i,
	\eeas
	where $I_i $ $(i=1, ..., \sfp)$ are subsets of $\bbR^\sfp$.  Assume that $\calj_0$ is not empty and  that for any $k \in \calj_0$, $\rho_k >1$. Then [{\bf S1}] obviously holds under [{\bf C5}]. In fact, $\calj_{0, +}$ is not empty from [{\bf C5}], and 
	for any $R>0$ and any positive sequence $\delta_T$ with $\delta_T\rightarrow 0$ as $T\rightarrow\infty$, 
	\beas
	\sup_{u\in S(R, \delta_T)}\inf_{v \in I(R)}\frac{|\overline u-\overline v|+|(u_k)_{k \in \calj_0, \rho_k=1}|}{\sum_{k \in \calj_{0, +}}|u_k|^{q_k}}&=& \sup_{u\in S(R, \delta_T)}\inf_{v \in I(R)}\frac{|\overline u-\overline v|}{\sum_{k \in \calj_{0, +}}|u_k|^{q_k}}\\
	&\leq & \sup_{u\in S(R, \delta_T)}\frac{|\overline u-\overline u|}{\sum_{k \in \calj_{0, +}}|u_k|^{q_k}}\\
	&=& 0.
	\eeas
	Thus, [{\bf S1}] holds. More generally, if  $\Theta$ can be decomposed as 
	\beas
	\Theta \yeq \bigg\{\theta \in \bbR^\sfp ; \big( (\theta_i)_{i \in \{1, ..., \sfp\} \setminus \calj_0}, (\theta_k)_{k \in \calj_0} \big)  \in A \times B \bigg\}
	\eeas
	for some $A \subset \bbR^{\sfp -|\calj_0|}$ and some $B \subset \bbR^{|\calj_0|}$,
	then the same argument goes.
 	}

	\end{example}

\subsection{Linear mixed model}\label{linearmixedmodel}
{We give an example where the Bridge estimator with $q<1$ does not show selection consistency.  
	Consider the  linear mixed model
	\beas
	&&Y \yeq   X \beta +  Z b + \epsilon, \\
	  &&b \sim N_{\sf{e}}(0, D), ~ \epsilon \sim N_{\sf{d}}(0, \sigma^2I_\sfd),
	\eeas
	where $X$ and $Z$ are   observable  random variables taking values in $\bbR^{\sf{d}}\otimes \bbR^{\sf{c}}$ and  $\bbR^{\sfd} \otimes \bbR^{\sf{e}}$, respectively. The nonrandom vector $\beta$ is an unknown parameter, 
	while $b$ is  an unobservable random effect that follows an $\sf{e}$-dimensional Gaussian distribution with mean zero and covariance matrix $D$. Besides, $(X, Z), b$ and $\epsilon$  are mutually independent. 
	
	Bondell, Krishna and Ghosh \cite{bondell2010joint} and Ibrahim et al.\,\cite{ibrahim2011fixed} consider penalized estimation using  Cholesky parametrizations for $D$, while  we use $D$ as is without re-parametrization. Not re-parametrizing $D$ is important not only since  it is more intuitive, but also since   the estimator  can be invariant under reordering of the parameter components.
	Let us  estimate the true value of the unknown parameters $\theta =\big (\beta,   \sigma^2, \psi(D)\big)$, where $\psi$ is defined in (\ref{psi}) when $\sfm=\sf{e}$.
	 Denote by $\Theta = \calb \times \Sigma \times \psi(\cald)  $ a parameter space of $\theta$, where ${\calb}$, $\Sigma$ and $\cald$ are compact subsets of $\bbR^{\sf{c}}$, $(0, \infty)$  and  $\cals_+^{\sf{e}}$, respectively. Denote  by $\theta^* =\big(\beta^*,  (\sigma^*)^2, \psi(D^*)\big)$ the true value of $\theta\in \Theta$. Suppose that $\beta^* \in {\rm Int}({\calb})$ 
	  and  $(\sigma^*)^2 \in {\rm Int}(\Sigma)$ and  that for some $\delta>0$,
	 \bea\label{Lineardelta}
	 \big\{D \in  {\cald}  ; \|D-D^*\|< \delta \big \} \yeq \big\{D \in  {\cals_+^{\sf{e}}}  ; \|D-D^*\|< \delta \big \}.
	 \eea

	 Let $\{X_i, Z_i, b_i, \epsilon_i\}_{i=1}^n$ be an $n$ independent copies of $\{X, Z, b, \epsilon\}$. We have $n$ couples of data $(Y_i, X_i, Z_i)$ $(i=1, ..., n)$. Then consider sparse estimation and maximize the estimation function
	 \beas
	 \mathbb{\Psi}_n(\theta) &=& -\frac{1}{2}\sum_{i=1}^n  
	 (Z_i D Z_i^\prime + \sigma^2I_\sfd)^{-1}\big[(Y_i -  X_i \beta)^{\otimes2}\big] -\frac{1}{2}\sum_{i=1}^n  
	 \log\det(Z_i D Z_i^\prime + \sigma^2I_\sfd) \\
	 && -n^{\frac{r}{2}}\sum_{1 \leq i \leq \sf{e}} \lambda_{i} |D_{ii}|^q \qquad\bigg(\theta=\big(\beta, \sigma^2, \psi(D)\big) \in \Theta\bigg),
	 \eeas
	 where $D=(D_{ij})_{1\leq i, j \leq \sf{e}}$, and $\lambda_{i}> 0$,  $0<q \leq 1$ and $0\leq   r \leq 1$ are tuning parameters. Note that we only penalize the diagonal elements of $D$.
	 Let $\hat \theta_n=\big(\hat \beta_n, \hat \sigma_n^2, \psi(\hat D_n)\big)$ be a maximizer of $\Psi_n$ on $\Theta$.  
	 
	For simplicity, suppose that $\sf{c} =\sf{d} =1$ and $\sf{e} =2$.  Then $\theta =(\beta, \sigma^2, D_{11}, D_{12}, D_{22})$ becomes a $5$-dimensional vector. 
	Consider the case where
	 \beas
	  D_{11}^*>0, ~~D_{12}^*=D_{22}^*=0.
	 \eeas
Assume $|X|, |Z| \in L^p(dP)$ for any $p>0$, and define a continuous function $L$ on $\Theta$ as for any $\theta=\big(\beta, \sigma^2, D_{11}, D_{12}, D_{22}\big) \in \Theta$,
\beas
L(\theta) &=&  E\bigg[ -\frac{1}{2} 
(ZD Z^\prime + \sigma^2)^{-1}(Y -  X \beta)^{2}-\frac{1}{2} 
\log(ZD Z^\prime + \sigma^2)\bigg].
\eeas
 Also, define a continuous random field $\bbZ$ on $\bbR^5$ as for any $u=(u_{\beta}, u_{\sigma^2}, w_1, w_2, w_3) \in \bbR^5$,
\bea\label{exfianlZ}
\bbZ(u)&=&\exp\bigg\{\Delta(\theta^*)[Bu]-\frac{1}{2} \Gamma(\theta^*)\big[{(Bu)}^{\otimes2}\big]
-1_{\{r=1\}}\lambda_{1}\frac{q}{|D^*_{11}|^{1-q}}w_1-\lambda_{2}1_{ \{q \leq r\}}|w_3|^{q}\bigg\},
\eea where $\Delta(\theta^*) \sim N_{5}\big(0, \Gamma(\theta^*)\big)$, $\Gamma(\theta^*)=\big(\Gamma(\theta^*)_{ij}\big)_{1\leq i,j\leq 5} =-\partial_\theta^2 L(\theta^*)$ and 
\beas
B &=& \begin{cases} I_5 & (q \geq r)\\
 {\rm diag}(1, 1, 1, 1, 0) & (q < r)
	\end{cases}.
\eeas
Take $a_n \in GL(5)$ as a diagonal matrix defined as
\beas
(a_n)_{jj} \yeq \begin{cases} n^{-\frac{1}{2}} & \big(j= 1, ..., 4 \big)\\
	n^{ -\frac{\rho}{2}} & \big(j = 5\big)
\end{cases},
\eeas
where $\rho = \frac{r}{q}\vee 1$. Define $U_n$ and $U$ as  (\ref{U_T}) and (\ref{U}), respectively.
Then from Example \ref{exU3}, [{\bf A3}] holds, and $U$ is determined as 
\bea\label{exfinalU}
U \yeq  \bbR^2 \times W \yeq \bbR^2 \times \begin{cases}  \big\{(w_1, w_2, w_3) \in \bbR^3 ; w_3 \geq 0 \big\} & (q>\frac{r}{2})\\
	\big\{(w_1, w_2, w_3) \in \bbR^3 ; D_{11}^*w_3 - w_2^2 \geq 0\big\} & (q=\frac{r}{2})\\
	\big\{(w_1, w_2, w_3) \in \bbR^3 ; w_3 \geq 0, w_2= 0\big\} & (q<\frac{r}{2})
\end{cases},
\eea
where $W \in \bbR^3$ is defined as  (\ref{ex3U}).
Suppose that $\Gamma(\theta^*)$ is non-degenerate and that for any $\theta\in\Theta$ with $\theta \neq \theta^*$, $L(\theta)<L(\theta^*)$. 

\begin{lemma}\label{finallem} Condition $[{\bf A4}]$ holds for $\bbZ$ defined in $(\ref{exfianlZ})$. Moreover, for the maximizer $\hat u$    defined in $[{\bf A4}]$, 
	\bea
	P[\hat u_4 =\hat u_5  =0] =1~~ \Leftrightarrow ~~ q<\frac{r}{2},\label{finalcondi}
	\eea
where $\hat u_i$ denotes the $i$-th component of $\hat u$ for each $i=1, ..., 5$.
	\end{lemma}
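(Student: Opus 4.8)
The plan is to verify $[{\bf A4}]$ by first showing that $\log\bbZ$ attains its supremum on $U$ and then that the maximizer is almost surely unique, and afterwards to prove the equivalence (\ref{finalcondi}) by a case analysis over the three regimes $q>r/2$, $q=r/2$, $q<r/2$ that determine $U$ and $B$ through (\ref{exfinalU}). Throughout I would exploit that $\bbZ$ depends on $u=(u_\beta,u_{\sigma^2},w_1,w_2,w_3)$ only through $Bu$ in the quadratic--linear part $Q(u):=\Delta(\theta^*)[Bu]-\tfrac12\Gamma(\theta^*)[(Bu)^{\otimes2}]$ and through the at most two penalized coordinates $w_1$ and $w_3$, and that $\Gamma(\theta^*)$ is nondegenerate, hence positive definite, so $Q$ is strictly concave in every coordinate that $B$ does not annihilate.

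For existence I would check coercivity of $\log\bbZ$ on $U$: one has $Q(u)\to-\infty$ whenever $|Bu|\to\infty$, while in the remaining directions either the constraint defining $U$ keeps the coordinate controlled by $Bu$ (the case $q=r/2$, where $D_{11}^*w_3\ge w_2^2$) or the bridge term $-\lambda_2\,1_{\{q\le r\}}|w_3|^q\to-\infty$ dominates; thus $\log\bbZ\to-\infty$ as $|u|\to\infty$ in $U$, and since $\log\bbZ$ is continuous and $U$ is closed the supremum is attained. Uniqueness is the delicate point. I would profile out the coordinates entering $Q$: strict concavity of $Q$ yields a unique inner optimizer and reduces $\log\bbZ$ to a strictly concave profiled function $\Phi$ of at most one remaining scalar $s$, minus a term $c\,|s|^{\kappa}$ with $\kappa\in(0,1]$ (this $s$ is $w_3$ in the sub-case $B=I_5$, $q=r<1$, or $w_2$ when $q=r/2$ with $r<1$; in the other sub-cases $\Phi$ is already strictly concave and uniqueness is immediate). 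When $\kappa=1$ the problem is concave with a unique maximizer, so the \emph{main obstacle} is the sparsity case $\kappa<1$: there the penalty is convex with infinite slope at $0$, $s=0$ is always a local maximizer, and a competing interior one may exist. I would rule out ties by noting that equality of the two candidate values is a single smooth equation in the Gaussian score $\Delta(\theta^*)\sim N_5(0,\Gamma(\theta^*))$; since $\Gamma(\theta^*)$ is nondegenerate, $\Delta(\theta^*)$ is absolutely continuous and this set is $P$-null, off which the global maximizer, and hence $\hat u$, is unique. This gives $[{\bf A4}]$.

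To prove (\ref{finalcondi}) I would split on the regime. For $q<r/2$ we have $q<r$, so $B$ annihilates $w_3$ and $U=\bbR^2\times\{w_3\ge0,\,w_2=0\}$; then $\hat u_4=w_2=0$ is forced and $w_3$ enters only through the strictly decreasing $-\lambda_2 w_3^{q}$, so $\hat u_5=0$ as well, giving $P[\hat u_4=\hat u_5=0]=1$. For $q=r/2$, $B$ again kills $w_3$ and $U$ imposes $D_{11}^*w_3\ge w_2^2$; since $w_3$ sits only in $-\lambda_2 w_3^q$ the constraint binds, $\hat w_3=\hat w_2^2/D_{11}^*$, whence $\{\hat u_4=\hat u_5=0\}=\{\hat w_2=0\}$, and after profiling the $w_2$-problem becomes $\max\Phi(w_2)-\lambda_2(D_{11}^*)^{-q}|w_2|^{r}$ with $r=2q\le1$, whose maximizer is interior on the positive-probability event that the full-support Gaussian score coefficient of $\Phi$ is large, so $P[\hat w_2=0]<1$. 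For $q>r/2$, the coordinate $w_2$ is neither penalized nor constrained and enters the strictly concave $Q$, so $\hat w_2$ is an affine image of the absolutely continuous $\Delta(\theta^*)$ and $P[\hat w_2=0]=0$. Since $P[\hat u_4=\hat u_5=0]<1$ in the last two regimes and $=1$ in the first, the equivalence (\ref{finalcondi}) follows.
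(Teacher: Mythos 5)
Your strategy is essentially the paper's: existence by coercivity, profiling the strictly concave directions to reduce uniqueness to a scalar problem $G(s)=\Phi(s)-c|s|^{\kappa}$ (with $s=w_3$ when $q=r<1$, and $s=w_2$ on the binding boundary $D_{11}^*w_3=w_2^2$ when $q=r/2$, $r<1$), followed by the same three-regime analysis for (\ref{finalcondi}). The genuine gap sits exactly at the step you yourself call the main obstacle: the assertion that, because ``equality of the two candidate values is a single smooth equation in the Gaussian score,'' the tie event is $P$-null. That inference is invalid: a smooth function of a nondegenerate Gaussian vector can vanish on a set of positive probability (even identically), so a level set is null only when the equation is non-degenerate, and verifying that non-degeneracy is the actual content of the paper's proof of $[{\bf A4}]$. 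The paper writes the tie and the stationarity of the interior candidate as a system $F_1=0$, $F_2=0$ and computes the Jacobian in the variables $(u_\beta,u_{\sigma^2},w_1,w_2,\Delta_4)$; the decisive entry is $\partial F_1/\partial\Delta_4=w_2\neq 0$, which holds because the $w_2=0$ candidate's value does not involve $\Delta_4$ at all, while the interior candidate's value depends on $\Delta_4$ with derivative $w_2$ (envelope theorem). Even then the Jacobian degenerates at one exceptional radius $|w_2|=a$ determined by $\Gamma(\theta^*)$, which the paper must handle by a separate reduced system before the implicit function theorem (together with a covering argument) places the tie set inside countably many graphs $\{\Delta_4=h(\Delta_1,\Delta_2,\Delta_3)\}$, hence a null set. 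Your proof needs this non-degeneracy input, either in the paper's form or, for instance, via Fubini: conditionally on the remaining coordinates of $\Delta(\theta^*)$, on each sign branch of $s$ the gap $\sup_s\{G(s)-G(0)\}$ is convex, nonnegative, and strictly increases in the score coordinate multiplying $s$ as soon as the argmax leaves $0$, so at most two values of that coordinate can produce a tie. Without such an argument, $[{\bf A4}]$ remains unproved in precisely the two cases ($q=r<1$ and $q=r/2$, $r<1$) where it is not a convexity statement.

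A secondary inaccuracy concerns the regime $q>r/2$ in your proof of (\ref{finalcondi}): you claim $\hat w_2$ is an affine image of $\Delta(\theta^*)$. That is correct (piecewise affine, with nondegenerate affine pieces) when $q>r$ or $r/2<q<r$, but false when $q=r<1$: there $B=I_5$ and the non-concave penalty $-\lambda_2 w_3^{r}$ makes $\hat w_3$, and through the cross terms of $\Gamma(\theta^*)$ also $\hat w_2$, a non-affine function of $\Delta(\theta^*)$. The conclusion you actually need is only $P[\hat u_4=\hat u_5=0]<1$, which the paper obtains uniformly over $q\geq r/2$ by the cheaper observation that on the positive-probability event that $\Delta_4(\theta^*)$ is large, the supremum of $\bbZ$ over $\{w_2=0\}$ is strictly beaten; you should argue the sub-case $q=r<1$ that way rather than via exact affinity.
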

\begin{proof}
	Assume $q=r2^{-1}$. Then from (\ref{exfianlZ}),
	for any $u=(u_{\beta}, u_{\sigma^2}, w_1, w_2, w_3) \in \bbR^5$,
	\bea\label{formofZ}
	\bbZ(u)&=&\exp\bigg\{H(u_{\beta}, u_{\sigma^2}, w_1, w_2)-\lambda_{2}|w_3|^{q}\bigg\},
	\eea where $H$ is some continuous random field on $\bbR^4$ not depending on $w_3$.
	Also, from (\ref{exfinalU}), $U = \bbR^3 \times \big\{(w_2, w_3) ; D_{11}^*w_3 \geq w_2^2\big\}$.
	 Since $w_3$ can decrease  when  $D_{11}^*w_3 > w_2^2$, the maximizers of $\bbZ$ on $U$ is contained in $\partial U= \bbR^3 \times \{D_{11}^*w_3 = w_2^2\}$, and for any $u=(u_{\beta}, u_{\sigma^2}, w_1, w_2, w_3) \in \partial U$,
	\beas
	\bbZ(u)&=&\exp\bigg\{\Delta(\theta^*)[Bu]-\frac{1}{2} \Gamma(\theta^*)\big[{(Bu)}^{\otimes2}\big]
	-1_{\{r=1\}}\lambda_{1}\frac{q}{|D^*_{11}|^{1-q}}w_1-\lambda_{2}(D_{11}^*)^{-q}|w_2|^{r}\bigg\}\\
	&=&\exp\bigg\{C+Aw_2-\gamma w_2^2-\lambda_{2}(D_{11}^*)^{-q}|w_2|^{r}\bigg\},
	\eeas
where $C$ and $A$ are  random variables depending on $(u_{\beta}, u_{\sigma^2}, w_1)$,  and $\gamma>0$ is a deterministic  number. Then we can narrow down the candidates for the $w_2$-component of the maximizers of $\bbZ$  to  two points. One of those two points equals $0$. 
 The  maximizer  of $\bbZ$ on $\partial U$ whose $w_2$-component equals $0$ is uniquely determined, and we denote it by $u^\dagger=(u_{\beta}^\dagger, u_{\sigma^2}^\dagger, w_1^\dagger, 0, 0)$.  Also, denote   $\log\bbZ\big(u_{\beta}, u_{\sigma^2}, w_1, w_2, (D_{11}^*)^{-q}w_2^2\big)$ by $L\big(u_{\beta}, u_{\sigma^2}, w_1, w_2; \Delta(\theta^*)\big)$. 
 
If the maximizer of $\bbZ$ of $\partial U$ is not uniquely determined,  then there exists  some $(u_{\beta}, u_{\sigma^2}, w_1, w_2) \in \bbR^4\setminus\{(u_{\beta}^\dagger, u_{\sigma^2}^\dagger, w_1^\dagger, 0)\}$  such that
 \bea
 F_1\big(u_{\beta}, u_{\sigma^2}, w_1, w_2, \Delta(\theta^*)\big)&:=&L\big(u_{\beta}, u_{\sigma^2}, w_1, w_2; \Delta(\theta^*)\big)- L\big(u_{\beta}^\dagger, u_{\sigma^2}^\dagger, w_1^\dagger, 0; \Delta(\theta^*)\big)\yeq 0 \label{F_1}\\
 F_2\big(u_{\beta}, u_{\sigma^2}, w_1, w_2, \Delta(\theta^*)\big)&:=&\partial_{(u_{\beta}, u_{\sigma^2}, w_1, w_2 )}L\big(u_{\beta}, u_{\sigma^2}, w_1, w_2 ; \Delta(\theta^*)\big) \yeq 0. \label{F_2} 
 \eea
Denote by $\Delta_i$ the $i$-th component of $\Delta:=\Delta(\theta^*)$. Now we consider $\Delta_1, ..., \Delta_4$ as variables  in the functions $ F_1, F_2$ as well as $(u_{\beta}, u_{\sigma^2}, w_1, w_2)$.
Then for any   $(u_{\beta}, u_{\sigma^2}, w_1, w_2, \Delta)$ with $w_2 \neq 0$ satisfying (\ref{F_1}) and (\ref{F_2}), 
\beas
\partial^\prime_{(u_{\beta}, u_{\sigma^2}, w_1, w_2, \Delta_4)}\begin{pmatrix} F_1\\ F_2\end{pmatrix} \yeq   \begin{pmatrix} 0& \cdots&& 0&w_2\\ & & & & 0 \\  & \text{\large$-\big(\Gamma(\theta^*)_{ij}\big)_{1\leq i, j \leq 4}$}& & & 0 \\ & & & & 0\\ &  &  &  &1\end{pmatrix}+ r(1-r)\frac{ \lambda_{2}(D_{11}^*)^{-q}}{|w_2|^{2-r}}\begin{pmatrix} 0\\0\\0\\0\\1\end{pmatrix}\begin{pmatrix} 0\\0\\0\\1\\0\end{pmatrix}^{\prime},
\eeas noting that $L(u_{\beta}^\dagger, u_{\sigma^2}^\dagger, w_1^\dagger, 0)$ does not depend on $\Delta_4$.
This matrix is non-degenerate except when $|w_2|$ equals $0$ or some  value $a$  depending on $\Gamma(\theta^*)$.  Therefore, from the implicit function theorem, for any   $(u_{\beta}, u_{\sigma^2}, w_1, w_2, \Delta)$ with $|w_2| \neq 0, a$ satisfying (\ref{F_1}) and (\ref{F_2}), $\Delta_4$ is  locally equal to some function of  $\Delta_1, \Delta_2, \Delta_3$. If $(u_{\beta}, u_{\sigma^2}, w_1, w_2, \Delta)$  satisfies (\ref{F_1}), (\ref{F_2}) and $|w_2|=a$, then removing the 4-th component of $F_2$ and  considering the Jacobian
\beas
\partial^\prime_{(u_{\beta}, u_{\sigma^2}, w_1,  \Delta_4)}\begin{pmatrix} F_1\\ \partial_{(u_{\beta}, u_{\sigma^2}, w_1 )}L\end{pmatrix} \yeq   \begin{pmatrix} 0& \cdots&& 0&w_2\\ & & & & 0 \\  & \text{\large$-\big(\Gamma(\theta^*)_{ij}\big)_{1\leq i, j \leq 3}$}& & & 0 \\ & & & & 0\end{pmatrix}, 
\eeas 
 we see $\Delta_4$ is  locally equal to some function of  $\Delta_1, \Delta_2, \Delta_3$. Therefore,  since the distribution of $\Delta(\theta^*)$ is $5$-dimensional Gaussian, the probability that (\ref{F_1}) and (\ref{F_2}) hold equals $0$.  Thus, [{\bf A4}] holds when $q=r2^{-1}$.  In this case, considering when $\Delta_4(\theta^*)$ takes some large value, we have
$P[\hat u_4 =0] < 1$.

Similarly, when $q=r$, we can show [{\bf A4}].  
\begin{en-text}
In fact, {\colorg from (\ref{exfinalU}), $U = \bbR^4\times[0, \infty)$, and 
	\beas
	\bbZ(u)&=&\exp\bigg\{\Delta(\theta^*)[u]-\frac{1}{2} \Gamma(\theta^*)\big[{u}^{\otimes2}\big]
	-1_{\{r=1\}}\lambda_{1}\frac{q}{|D^*_{11}|^{1-q}}w_1-\lambda_{2}|w_3|^{r}\bigg\}\\
	&=&\exp\bigg\{C+Aw_3-\gamma w_3^2-\lambda_{2}|w_3|^{r}\bigg\},
	\eeas
	where $C$ and $A$ are  random variables depending on $(u_{\beta}, u_{\sigma^2}, w_1, w_2)$,  and $\gamma>0$ is a deterministic  number. Then we can narrow down the candidates for the $w_3$-component of the maximizers of $\bbZ$  to  two points. One of those two points equals $0$. 
	The  maximizer  of $\bbZ$ on $U$ whose $w_3$-component equals $0$ is uniquely determined, and we denote it by $u^\dagger=(u_{\beta}^\dagger, u_{\sigma^2}^\dagger, w_1^\dagger, w_2^\dagger, 0)$.  Also, denote   $\log\bbZ\big(u_{\beta}, u_{\sigma^2}, w_1, w_2, w_3)$ by $L\big(u_{\beta}, u_{\sigma^2}, w_1, w_2, w_3; \Delta(\theta^*)\big)$. 
	
	If the maximizer of $\bbZ$ of $U$ is not uniquely determined,  then there exists  some $(u_{\beta}, u_{\sigma^2}, w_1, w_2, w_3) \in \bbR^4 \times[0, \infty)\setminus\{u^\dagger\}$  such that
	\bea
	F_1\big(u_{\beta}, u_{\sigma^2}, w_1, w_2, w_3, \Delta(\theta^*)\big)&:=&L\big(u_{\beta}, u_{\sigma^2}, w_1, w_2, w_3; \Delta(\theta^*)\big)- L\big(u_{\beta}^\dagger, u_{\sigma^2}^\dagger, w_1^\dagger, w_2^\dagger, 0; \Delta(\theta^*)\big)\yeq 0 \label{F_1prime}\\
	F_2\big(u_{\beta}, u_{\sigma^2}, w_1, w_2, w_3, \Delta(\theta^*)\big)&:=&\partial_{(u_{\beta}, u_{\sigma^2}, w_1, w_2, w_3 )}L\big(u_{\beta}, u_{\sigma^2}, w_1, w_2, w_3 ; \Delta(\theta^*)\big) \yeq 0. \label{F_2prime} 
	\eea
	Denote by $\Delta_i$ the $i$-th component of $\Delta:=\Delta(\theta^*)$. Now we consider $\Delta_1, ..., \Delta_5$ as variables  in the functions $ F_1, F_2$ as well as $(u_{\beta}, u_{\sigma^2}, w_1, w_2, w_3)$.
	Then for any   $(u_{\beta}, u_{\sigma^2}, w_1, w_2, w_3, \Delta)$ with $w_3 \neq 0$ satisfying (\ref{F_1prime}) and (\ref{F_2prime}), 
	\beas
	\partial^\prime_{(u_{\beta}, u_{\sigma^2}, w_1, w_2, w_3, \Delta_5)}\begin{pmatrix} F_1\\ F_2\end{pmatrix} \yeq   \begin{pmatrix} 0& \cdots&& 0&w_3\\ & & & & 0 \\  & \text{\large$-\Gamma(\theta^*)$}& & & 0 \\ & & & & 0\\ &&&&0\\&  &  &  &1\end{pmatrix}+ r(1-r)\frac{ \lambda_{2}}{|w_3|^{2-r}}\begin{pmatrix} 0\\0\\0\\0\\0\\1\end{pmatrix}\begin{pmatrix} 0\\0\\0\\0\\1\\0\end{pmatrix}^{\prime},
	\eeas noting that $L(u_{\beta}^\dagger, u_{\sigma^2}^\dagger, w_1^\dagger, w_2^\dagger, 0)$ does not depend on $\Delta_4$.
	This matrix is non-degenerate except when $|w_3|$ equals some  value $a$  depending on $\Gamma(\theta^*)$.  Therefore, from the implicit function theorem, for any   $(u_{\beta}, u_{\sigma^2}, w_1, w_2, w_3, \Delta)$ with $|w_3| \neq a$ satisfying (\ref{F_1prime}) and (\ref{F_2prime}), $\Delta_5$ is  locally equal to some function of  $\Delta_1, \Delta_2, \Delta_3, \Delta_4$. If $(u_{\beta}, u_{\sigma^2}, w_1, w_2, w_3, \Delta)$  satisfies (\ref{F_1}), (\ref{F_2}) and $|w_3|=a$, then removing the 5-th component of $F_2$ and  considering the Jacovian
	\beas
	\partial^\prime_{(u_{\beta}, u_{\sigma^2}, w_1,  \Delta_4)}\begin{pmatrix} F_1\\ \partial_{(u_{\beta}, u_{\sigma^2}, w_1, w_2)}L\end{pmatrix} \yeq   \begin{pmatrix} 0& \cdots&& 0&w_2\\ & & & & 0 \\  & \text{\large$-\big(\Gamma(\theta^*)_{ij}\big)_{1\leq i, j \leq 4}$}& & & 0 \\ & & & & 0\\&&&&0\end{pmatrix}, 
	\eeas 
	we see $\Delta_5$ is  locally equal to some function of  $\Delta_1, \Delta_2, \Delta_3, \Delta_4$. Therefore,  since the distribution of $\Delta(\theta^*)$ is $5$-dimensional Gaussian, the probability that (\ref{F_1prime}) and (\ref{F_2prime}) hold equals $0$.  Thus, [{\bf A4}] holds when $q=r$. 
}
\end{en-text}
Also, when $q>r$, $r2^{-1}<q<r$ or $q<r2^{-1}$, we easily obtain [{\bf A4}] from the convexity of $U$ and the convexity of  the function $\Delta(\theta^*)[Bu]-2^{-1} \Gamma(\theta^*)\big[{(Bu)}^{\otimes2}\big]$.
Thus, [{\bf A4}] holds for any $q$ and $r$. If $q>r2^{-1}$,  then  considering when $\Delta_4(\theta^*)$ takes some large value, we have $P[\hat u_4 =0] < 1$.   If $q< r2^{-1}$, then we obviously obtain $P[\hat u_4=\hat u_5 =0]=1$. Thus, we obtain [{\bf A4}] and (\ref{finalcondi}).
	\end{proof}

Denotes by  $\hat D_{ij, n}$ the $(i, j)$ entry of $\hat D_n$ for each $i, j= 1,2$. Then  the following theorem gives the weak convergence of the estimator and  the condition for its selection consistency. 

\begin{theorem}\label{sparseex}
	\bea \hat u_n :=a_n^{-1}(\hat \theta_n -\theta^*) \overset{d}{\to} \hat u.\label{sparsehatu}\eea
	Moreover, selection consistency as
	\bea
	P[\hat D_{22, n}=0]=P[\hat D_{12, n}=\hat D_{22, n}=0] \to 0 \qquad(n\to \infty) \label{exsparse}
	\eea
	holds if and only if
	\beas
	q < \frac{r}{2}.
	\eeas
	{In particular, the estimator has the oracle property if and only if $q < \frac{r}{2}$ and $r<1$.}
	\end{theorem}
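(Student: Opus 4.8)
The plan is to derive the weak convergence (\ref{sparsehatu}) as a direct application of Theorem \ref{thmpenalized}, and then to extract selection consistency and the oracle property from the explicit shape of the limit field $\bbZ$ in (\ref{exfianlZ}). First I would record the scaling: here $\frak a_{j,n}=n^{-1/2}$ for every $j$, so $\frak b_n=n$, the penalized index sets are $\calj_1=\{3\}$ (the $D_{11}$-slot, whose true value is positive) and $\calj_0=\{5\}$ (the $D_{22}$-slot), and $\rho_5=\rho=(r/q)\vee1$. With [{\bf A3}] and the expression (\ref{exfinalU}) for $U$ supplied by Example \ref{exU3}, and [{\bf A4}] supplied by Lemma \ref{finallem}, it remains only to verify [{\bf C1}]--[{\bf C5}]. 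Writing $\calh_n$ for the unpenalized part of $\mathbb{\Psi}_n$, a uniform law of large numbers (legitimate on the compact $\Theta$ under $|X|,|Z|\in L^p(dP)$) gives $\caly_n\to\caly=L(\cdot)-L(\theta^*)$, and the assumed strict maximality $L(\theta)<L(\theta^*)$ for $\theta\neq\theta^*$ yields [{\bf C1}]; the i.i.d.\ central limit theorem for the score $\frak a_n\partial_\theta\calh_n(\theta^*)$ together with $\frak a_{3,n}\xi_{3,n}=\lambda_1n^{(r-1)/2}\to1_{\{r=1\}}\lambda_1$ and $\frak a_{5,n}^{q\rho}\xi_{5,n}=\lambda_2n^{(r-q\rho)/2}\to\lambda_2 1_{\{q\leq r\}}$ give [{\bf C2}] and reproduce (\ref{exfianlZ}); [{\bf C3}] is the assumed nondegeneracy of $\Gamma(\theta^*)$; [{\bf C4}] holds since $\xi_{3,n}\frak b_n^{-1}=\lambda_1n^{r/2-1}\to0$ because $r\leq1$; and [{\bf C5}] holds because $\rho_5>1$ forces $q<r$ and hence $d_5=\lambda_2>0$. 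Theorem \ref{thmpenalized} then delivers (\ref{sparsehatu}).

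For the equality in (\ref{exsparse}) I would use that $\hat D_n\in\cals_+^2$, so $\hat D_{11,n}\hat D_{22,n}\geq\hat D_{12,n}^2$ and therefore $\{\hat D_{22,n}=0\}\subseteq\{\hat D_{12,n}=0\}$, which gives $P[\hat D_{22,n}=0]=P[\hat D_{12,n}=\hat D_{22,n}=0]$. To prove that this common probability tends to $1$ when $q<r/2$, I would invoke Theorem \ref{Selection1}. Note first that $q<r/2$ together with $r\leq1$ forces $q<1/2$ and $\rho=r/q>2$, and that $\hat u_{5,n}\overset{P}{\to}0$ because $\hat u_{5,n}\overset{d}{\to}\hat u_5=0$ a.s.\ by Lemma \ref{finallem}; moreover $\calj_{0,+}=\{5\}$ with $q\leq1$, giving [{\bf S2}]. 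The crux is [{\bf S1}]. Since $\rho_5>1$, the term $|(u_k)_{k\in\calj_0,\rho_k=1}|$ vanishes; and because $w_3=0$ on $U_n$ forces $w_2=0$ through the semidefinite constraint of Example \ref{exU3}, matching $v\in I(R)$ to $u$ in every $\overline u$-coordinate except the $w_2$-slot gives $\inf_{v\in I(R)}|\overline u-\overline v|\leq|w_2|$. On $S(R,\delta_n)$ the constraint $(D_{11}^*+n^{-1/2}w_1)n^{-\rho/2}w_3\geq n^{-1}w_2^2$ bounds $|w_2|\leq Cn^{(2-\rho)/4}w_3^{1/2}$, whence the ratio in [{\bf S1}] is at most $Cn^{(2-\rho)/4}w_3^{1/2-q}\leq Cn^{(2-\rho)/4}\delta_n^{1/2-q}\to0$, using $\rho>2$ and $1/2-q>0$. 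Thus [{\bf S1}] holds and Theorem \ref{Selection1} gives $P[\hat D_{22,n}=0]\to1$.

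For the converse I would argue by failure when $q\geq r/2$. The proof of Lemma \ref{finallem} shows, by letting $\Delta_4(\theta^*)$ be large, that $P[\hat u_4=0]<1$ in both regimes $q=r/2$ and $q>r/2$. Since $\hat u_{4,n}=n^{1/2}\hat D_{12,n}\overset{d}{\to}\hat u_4$ and $\{x:x\neq0\}$ is open, the portmanteau theorem gives $\liminf_nP[\hat u_{4,n}\neq0]\geq P[\hat u_4\neq0]>0$, so $\limsup_nP[\hat u_{4,n}=0]<1$. As $\{\hat D_{12,n}=\hat D_{22,n}=0\}\subseteq\{\hat u_{4,n}=0\}$, this forces $\limsup_nP[\hat D_{12,n}=\hat D_{22,n}=0]<1$, i.e.\ selection consistency fails; combined with the previous paragraph this establishes the equivalence in (\ref{exsparse}).

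Finally, for the oracle property I would add to selection consistency ($q<r/2$) the requirement that the surviving coordinates $(\hat\beta_n,\hat\sigma^2_n,\hat D_{11,n})$ attain the limit law of the estimator that already knows $D_{12}^*=D_{22}^*=0$. On the event $\{\hat D_{12,n}=\hat D_{22,n}=0\}$, whose probability tends to $1$, the problem reduces to maximizing $\bbZ$ restricted to $\{w_2=w_3=0\}$; there the $d_5|w_3|^q$ term disappears and, since $B={\rm diag}(1,1,1,1,0)$, the vector $Bu$ is supported on $A=\{1,2,3\}$, leaving $\Delta_A[u_A]-\frac{1}{2}\Gamma_{AA}(\theta^*)[u_A^{\otimes2}]-1_{\{r=1\}}\lambda_1 q|D_{11}^*|^{q-1}u_3$, where $\Gamma_{AA}(\theta^*)$ is the $A$-block of $\Gamma(\theta^*)$. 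When $r<1$ the shift vanishes and the argmax is $\Gamma_{AA}(\theta^*)^{-1}\Delta_A\sim N(0,\Gamma_{AA}(\theta^*)^{-1})$, which is exactly the oracle (principal-block) limit; when $r=1$ the nonzero components carry a deterministic asymptotic bias, so the oracle property fails. Hence the oracle property holds if and only if $q<r/2$ and $r<1$. I expect the main obstacle to be the uniform ratio estimate in [{\bf S1}], where the shrinking window $\delta_n$, the rate $\rho$, and the semidefinite constraint must be balanced simultaneously; the necessity direction, by contrast, reduces cleanly to the open-set half of the portmanteau theorem, and the oracle computation is routine once selection consistency is in hand.
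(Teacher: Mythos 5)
Your proof is correct and takes essentially the same route as the paper: Theorem \ref{thmpenalized} with the identical choices of $\calj_1=\{3\}$, $\calj_0=\{5\}$, $\frak a_n=n^{-1/2}I_5$, $\rho_5=(r/q)\vee 1$, $c_3=\lambda_1 1_{\{r=1\}}$, $d_5=\lambda_2 1_{\{q\le r\}}$, combined with Example \ref{exU3} and Lemma \ref{finallem} for (\ref{sparsehatu}); the same ratio estimate $|u_4|/|u_5|^q\le C n^{(2-\rho)/4}|u_5|^{1/2-q}\to 0$ verifying [{\bf S1}] (and [{\bf S2}]) so that Theorem \ref{Selection1} yields sufficiency; and Lemma \ref{finallem} for necessity when $q\ge r/2$. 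Your additions — the positive-semidefiniteness argument for $P[\hat D_{22,n}=0]=P[\hat D_{12,n}=\hat D_{22,n}=0]$, the explicit portmanteau step in the converse, and the restricted-field computation of the oracle limit $N(0,\Gamma_{AA}(\theta^*)^{-1})$ with the $r=1$ bias term — are all correct fillings-in of steps the paper leaves implicit.
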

{
\begin{proof}
	 To apply Theorem \ref{thmpenalized}, we ensure [{\bf C1}]-[{\bf C5}], [{\bf A3}] and [{\bf A4}] . 
 Let $\calg$ be $\{\phi, \calf\}$.
Define $\calj$, $\calj_0$ and $\calj_1$  as
 \beas
 \calj \yeq \{3,  5\}, ~~\calj_1 \yeq \{3\}, ~~\calj_0 \yeq  \{5\}.
 \eeas
	Take  $p_j$, $q_j$, $\xi_{j, n}$  $(j \in \calj)$, 
	\beas
	p_3(x)=p_5(x)= |x|^q \qquad(x \in \bbR),~~~q_3=q_5=q,~~~~
	(\xi_{3, n}, \xi_{5, n} ) \yeq  (n^{\frac{r}{2}} \lambda_{1},  n^{\frac{r}{2}} \lambda_{2}).
	\eeas
	Define $\calh_n$, $\caly$ as  for any $ \theta=(\beta, \psi(D), \sigma^2) \in \Theta$,
	\beas
	\calh_n(\theta) &=& -\frac{1}{2}\sum_{i=1}^n  
	(Z_iD Z_i^\prime + \sigma^2)^{-1}(Y_i -  X_i \beta)^{2} -\frac{1}{2}\sum_{i=1}^n  
	\log(Z_iD Z_i^\prime + \sigma^2),\\
	{ \caly(\theta) }&=& L(\theta)-L(\theta^*).
	\eeas
	Take a bounded open set $\caln\subset{\bbR^5}$ satisfying Condition (i) and (ii) in Section \ref{Quasisection1.1} as
	\beas   \caln = \bigg\{(\beta, \sigma^2) \in \bbR \times (0, \infty) ; |\beta-\beta^*|<1, |\sigma^2 -(\sigma^*)^2|<\frac{(\sigma^*)^2}{2} \bigg\} \times \psi\bigg(\bigg\{D \in \cals_+^2; \det(D)>0, \|D-D^*\| < \delta\bigg\}\bigg) \eeas for $\delta>0$ in (\ref{Lineardelta}).
Take 	$\frak a_n$,  $\rho_k$ $(k \in \calj_0)$, $c_i$ $(i \in \calj_1)$ and $d_k$ $(k \in \calj_0)$ as
	\beas
	\frak a_n \yeq n^{-\frac{1}{2}}I_5, ~~\rho_5\yeq \rho \yeq  \frac{r}{q} \vee 1, ~~c_3 \yeq \lambda_{1}1_{\{r=1\}}, ~~d_5  \yeq  \lambda_{2}1_{ \{q \leq r\}}. 
	\eeas
	Then [{\bf C1}]-[{\bf C5}] obviously hold. Condition [{\bf A3}], [{\bf A4}] hold as mentioned above. Therefore, we obtain (\ref{sparsehatu}).
	
 From Lemma \ref{finallem}, (\ref{finalcondi}) holds. Therefore, it is sufficient to show (\ref{exsparse}) when $q <r2^{-1}$.
For that, we show [{\bf S1}]. Take any $R>0$ and any positive sequence $\delta_n$ with $\delta_n \to 0$. Then  for sufficiently large $n$, 
\beas
U_n\cap B_{R} &=&  \big\{u \in \bbR^4 \times [0, \infty) ; ~(D_{11}^*+n^{-\frac{1}{2}}u_3)n^{-\frac{r}{2q} }u_5 \geq n^{-1}u_4^2 \big\}\cap B_R\\
S(R, \delta_n) &=&\big\{u \in U_n ;\, 0<|u_5|<\delta_n, |\ol u|\leq R\big\}\\
&=&\big\{u \in \bbR^4 \times [0, \infty) ; ~(D_{11}^*+n^{-\frac{1}{2}}u_3)n^{-\frac{r}{2q} }u_5 \geq n^{-1}u_4^2, ~ 0<|u_5|<\delta_n, ~|\ol u|\leq R\big\}\\
&\subset&\big\{u \in \bbR^5 ; ~2D_{11}^*n^{-(\frac{r}{2q} -1)}u_5 \geq u_4^2, ~ 0<u_5<\delta_n, ~|\ol u|\leq R\big\},\\
I(R) &=&\big\{v \in U_n ;\, v_5=0, |\ol v|\leq R\big\}\yeq\big\{v \in \bbR^5 ;\, v_4=v_5=0, |\ol v|\leq R\big\},
\eeas
where $u=(u_{1}, u_{2}, u_3, u_4, u_5)$, $v=(v_{1}, v_{2}, v_3, v_4, v_5)$, and $\ol u, \ol v$ denote  $(u_1, u_2, u_3, u_4), (v_1, v_2, v_3, v_4)$, respectively.
Then noting that $\calj_{0, +} =\{k \in \calj_0 ; d_k >0 \,\,a.s.\}=\{5\}$,
\beas
\sup_{u\in S(R, \delta_n)}\inf_{v \in I(R)}\frac{|\overline u-\overline v|+|(u_k)_{k \in \calj_0, \rho_k=1}|}{\sum_{k \in \calj_{0, +}}|u_k|^{q}}&=&\sup_{u\in S(R, \delta_n)}\inf_{v \in I(R)}\frac{|\overline u-\overline v|}{|u_5 |^{q}}\\
&\leq&\sup_{u\in S(R, \delta_n)}\frac{\big|\overline u-(u_1, u_2, u_3, 0)\big|}{|u_5|^{q}}
\\&=&\sup_{u\in S(R, \delta_n)}\frac{|u_4|}{|u_5|^{q}}\\
&\leq &\sup_{0<|u_5|<\delta_n}\frac{\big|2D_{11}^*n^{-(\frac{r}{2q}-1)}u_5\big|^{\frac{1}{2}}}{|u_5|^q}\\
&\rightarrow& 0  \qquad(n\rightarrow \infty).
\eeas
Thus, [{\bf S1}] holds. Besides, [{\bf S2}] obviously holds. Thus, from Theorem \ref{Selection1}, (\ref{exsparse}) holds.
	\end{proof}
}

}

\section{Proof of Theorem \ref{thm1}}

In this section, we prove Theorem \ref{thm1}. Before that, we prepare some lemmas.

\begin{lemma}\label{lem1}
	\bd
	\im[(i)] $U$ is closed in $\mathbb{R}^\sfp$.
	\im[(ii)] For any open sets $O$ of $\mathbb{R}^\sfp$ and for any $R>0$ and $\delta>0$, there exists some number $N=N(O, R, \delta)$ such that for any $T\in\bbT$ with ${T\geq N}$,
	\begin{eqnarray*}
		U(R)\cap O \subset \big(U_T(2R)\cap O\big)^\delta. 
	\end{eqnarray*}
	
	\im[(iii)] Under {\rm [{\bf A3}]}, for any closed sets $F$ of $\mathbb{R}^\sfp$ and for any $R >0$ and $\delta>0$, there exists some number $N = N(F, R, \delta)$ such that for any $T \in\bbT$ with  $T \geq N$,
	\begin{eqnarray*}
		U_T(R)\cap F \subset \big(U(R)\cap F\big)^{\delta}.
	\end{eqnarray*}
	\ed
\end{lemma}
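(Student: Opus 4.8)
The plan is to reduce all three parts to a single pointwise reformulation of $U$. Unwinding the definition (\ref{U}), a point $u$ lies in $U$ if and only if for every $\delta>0$ there is an $N$ with $d(u,U_T)<\delta$ for all $T\geq N$; that is,
\[
U=\big\{u\in\mathbb{R}^\sfp ; \ d(u,U_T)\to 0 \text{ as } T\to\infty\big\}.
\]
Since $\theta^*\in\Theta$ forces $0\in U_T$ for every $T$, we have $d(u,U_T)\leq|u|<\infty$, so $g(u):=\varlimsup_{T\to\infty}d(u,U_T)$ is finite; being a $\varlimsup$ of the $1$-Lipschitz maps $u\mapsto d(u,U_T)$, it is itself $1$-Lipschitz, hence continuous. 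This gives (i) at once, because $U=g^{-1}(\{0\})$ is the preimage of a closed set under a continuous map. (Equivalently one runs the direct sequential argument: if $u_n\in U$ and $u_n\to u$, then $d(u,U_T)\leq|u-u_n|+d(u_n,U_T)$, and letting $T\to\infty$ then $n\to\infty$ yields $g(u)=0$.)

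The engine for (ii) and (iii) is the \emph{uniform} approximation statement $\sup_{u\in U(R)}d\big(u,U_T(2R)\big)\to 0$ as $T\to\infty$, which I would prove by contradiction using compactness of $U(R)=U\cap\overline{B_R}$. A failing sequence $u_n\in U(R)$ with $d(u_n,U_{T_n}(2R))\geq\delta$ and $T_n\to\infty$ has a convergent subsequence $u_n\to u^*\in U(R)$; since $u^*\in U$ there are $v_n\in U_{T_n}$ with $v_n\to u^*$, and $|v_n|\leq R+o(1)\leq 2R$ places $v_n\in U_{T_n}(2R)$, so $d(u_n,U_{T_n}(2R))\leq|u_n-v_n|\to0$, a contradiction. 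Part (iii) then runs in the same compactness style but in the opposite direction, and this is where [{\bf A3}] enters: from a would-be counterexample $u_n\in U_{T_n}(R)\cap F$ with $d(u_n,U(R)\cap F)\geq\delta$, extract $u_n\to u^*$; then $u^*\in\bigcap_{N}\overline{\bigcup_{T\geq N}U_T}$, so [{\bf A3}] gives $u^*\in U$, while $|u^*|\leq R$ together with the closedness of $F$ gives $u^*\in U(R)\cap F$, contradicting $d(u_n,U(R)\cap F)\geq\delta$.

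Part (ii) for a general open $O$ is the delicate step, and I expect it to be the main obstacle. The uniform approximant $v\in U_T(2R)$ of a point $u\in U(R)\cap O$ need not lie in $O$: this can fail only when $u$ is within $\eta_T:=\sup_{U(R)}d(\cdot,U_T(2R))$ of $\partial O$. To repair it I would not approximate $u$ directly but first slide to a nearby strictly interior point. Because $O$ is open, every point of $U(R)\cap O$ has positive distance to $O^c$, and near any limit point $u^*\in\partial O$ of such points one still finds a fixed $u''\in U\cap O$ with $|u''-u|<\delta$ whose depth $d(u'',O^c)$ is bounded below by a constant independent of $T$. For $T$ so large that $\eta_T$ is smaller than this depth, the approximant of $u''$ stays inside $O$ and within $\delta$ of $u$; the factor $2R$ in $U_T(2R)$ is precisely the slack that keeps this detour inside the ball. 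Organizing the choice of $u''$ so as to be uniform over $u\in U(R)\cap O$ (again via compactness of $U(R)$, now combined with the lower bound on interior depth near $\partial O$) is the single point that requires genuine care, whereas (i) and (iii) are essentially immediate from the characterization above and [{\bf A3}].
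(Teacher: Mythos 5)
Your proposal is correct in substance, but it takes a genuinely different route from the paper, and in one respect a stronger one. For (i), the paper runs exactly the direct sequential argument you give in parentheses; your packaging via the $1$-Lipschitz function $g(u)=\varlimsup_{T}d(u,U_T)$ with $U=g^{-1}(\{0\})$ is equivalent but cleaner. For (iii), your contradiction-plus-compactness argument, invoking [{\bf A3}] at the subsequential limit, is essentially identical to the paper's proof. The real divergence is (ii): the paper argues pointwise --- fix $x\in U(R)\cap O$, choose $\delta_0=\delta_0(x)$ with $B_{\delta_0}(x)\subset O$, and approximate $x$ by $x_T\in U_T$ for $T\geq N$ --- so the $N$ it produces depends on $x$ (through $\delta_0(x)$ and through the rate at which $d(x,U_T)\to 0$), whereas the statement of (ii), and its use in the proof of Theorem \ref{thm1} (comparing suprema over $U(R/2)\cap O$ and over $(U_T(R)\cap O)^{\delta}$), is a set inclusion requiring a single $N$. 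You aim at this uniform statement directly, which is the right target, and the step you flag as the main obstacle actually closes immediately inside your own contradiction framework, with no need to organize the choice of $u''$ uniformly over $u$: if the inclusion fails, there are $T_n\to\infty$ and $u_n\in U(R)\cap O$ with $d\big(u_n,U_{T_n}(2R)\cap O\big)\geq\delta$; by compactness and (i), pass to a subsequence with $u_n\to u^*\in U(R)$; then take as the interior point simply $u'':=u_m$ for one fixed large $m$ with $|u_m-u^*|<\delta/4$. This $u''$ lies in $U\cap O$, satisfies $|u''|\leq R$, and has $c_0:=d(u'',O^c)>0$ automatically, because $O$ is open and $u''$ is a single fixed point of $O$ --- no uniform lower bound on "depth" near $\partial O$ is needed. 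Since $u''\in U$, for all large $T$ there is $w_T\in U_T$ with $|w_T-u''|<\min(c_0,\delta/4,R)$, so that $w_T\in U_T(2R)\cap O$, and then $d\big(u_n,U_{T_n}(2R)\cap O\big)\leq|u_n-u^*|+|u^*-u''|+|u''-w_{T_n}|<\delta$ for large $n$, a contradiction. So your sketch completes in two lines, needs only pointwise convergence at the single point $u''$ rather than the uniform "engine", and in fact supplies the uniformity (the quantifier exchange from "for each $x$ there is $N(x)$" to "one $N$ for the whole inclusion") that the paper's own proof of (ii) glosses over.
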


\begin{proof}
	We define $B_{\delta}(x)$ by $B_{\delta}(x) = \{y\in \mathbb{R}^\sfp; |y - x|< \delta\}$ for $\delta >0 $ and $x \in \mathbb{R}^\sfp$.\y
	\noindent (i) 
	Let  $\{x_n\}$ be an arbitrary sequence in $U$ which converges to some $x \in \mathbb{R}^\sfp$. We will show $x \in U$.  Let $\delta > 0$, then there exists a number ${n_0}$ such that $x \in B_{\delta/2}(x_{n_0}) $. Since $x_{n_0} \in U$, there exists a number $N = N(\delta)$ such that for all $T \geq N$, $x_{n_0} \in {U_T}^{\delta/2}$. Then for all $T \geq N$, $x \in {U_T}^{\delta}$, which means $x \in U$.
	\y\noindent (ii) 
	Take any  open set $O$ and any $R, \delta>0$.
	Let $x \in U(R)\cap O$. Choose $\delta_0$ with $0<\delta_0\leq \delta\wedge R$ satisfying $B_{\delta_0}(x)  \subset O$. 
	Since $x \in U$, there exists some number $N = N(\delta_0)$ such that for any $T\geq N$, $x \in {U_T}^{\delta_0}$. In particular, for any $T\geq N$, there exists ${x}_T \in U_T$ satisfying $|x-x_T|<\delta_0$. But then for any $T\geq N$, $x_T \in U_T(2R)\cap O$ and $x \in B_{\delta_0}(x_T) \subset \big(U_T(2R) \cap O\big)^{\delta}$.
	\y\noindent (iii) 
	{
		We will prove the assertion by contradiction. 
		Under [{\bf A3}], suppose that }
	\begin{eqnarray*}
		\exists F : a ~closed ~subset ~of~ \mathbb{R}^\sfp ~~\exists R >0 ~~\exists \delta_0>0 ~~\exists\{T_N\}_{N=1}^{\infty}\subset\bbT &&\\
		s.t.~~
		{\lim_{N\to\infty}T_N=\infty} ~~{ \text{and}}
		~~ U_{T_N}(R)\cap F \nsubseteq \big(U(R)\cap F\big)^{\delta_0}.&&
	\end{eqnarray*}	
	Choose ${x_{T_N}}$ satisfying $x_{T_N} \in \big(U_{T_N}(R)\cap F\big) \setminus \big(U(R)\cap F\big)^{\delta_0}$.
	Since ${x}_{T_N} \in F \cap \overline B_{R}$, there exists a subsequence ${x}_{T_{N_k}}$ satisfying ${x}_{T_{N_k}} \rightarrow x$ as $k \rightarrow \infty$ where $x \in F\cap\overline B_R$. 
	Then
	\begin{eqnarray*}&&
		x\in 		\bigcap_{\delta >0} \bigcup_{n=1}^{\infty} \bigcap_{k\geq n} 	{U_{T_{N_k}}}^{\delta} 
		\>\subset\> 
		\bigcap_{\delta >0} \bigcap_{n=1}^{\infty} \bigcup_{k\geq n} 	{U_{T_{N_k}}}^{\delta} 
		\>\subset\> 
		\bigcap_{N=1}^{\infty}\bigcap_{\delta >0} \bigcup_{T\geq N} 	{U_T}^{\delta}
		\\&&\hspace{20pt}
		\yeq
		\bigcap_{N=1}^{\infty}\bigcap_{\delta >0} \big(\bigcup_{T\geq N} 	{U_T}\big)^{\delta}
		\yeq
		\bigcap_{N=1}^{\infty}\overline{\big(\bigcup_{T\geq N} 	{U_T}\big)}
		\yeq 
		U.
	\end{eqnarray*}
	Therefore, $x \in U(R)\cap F$. But then for sufficiently large $k$, $x_{T_{N_k}} \in B_{\delta_0}(x) \subset \big(U(R)\cap F\big)^{\delta_0}$, which contradicts how to take $x_{T_{N}}$.
\end{proof}
\halflineskip

	\begin{lemma}\label{lem3}
		Let $\bbV$ be a $C(\bbR^\sfp)$-valued random variable satisfying that $\bbV\overset{d}{=}\bbZ$. Assume {\rm [{\bf A4}]}. 
		 Let  $\hat{v}$  be a
		 $U$-valued random variable satisfying that with probability $1$,  
		\begin{eqnarray*}\label{111}{\mathbb{V}(\hat{v})} = \sup_{U }{\mathbb{V}(u)}.
		\end{eqnarray*}
		 Then, with probability 1, for all $u \in U$ with $u \neq \hat{v}$,
		\begin{eqnarray}\label{1111}  {\mathbb{V}(u)} < {\mathbb{V}(\hat{v})},
		\end{eqnarray}
and 
\bea
 (\bbV, \hat v)\overset{d}{=}(\bbZ, \hat u). \label{11111}
\eea
		\end{lemma}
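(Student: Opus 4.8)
The plan is to reduce both conclusions to the existence of a single Borel-measurable functional $m:C(\bbR^\sfp)\to\bbR^\sfp$, defined on a measurable set of full $\bbZ$-measure, with the property that $m(f)$ is the maximizer of $f$ over $U$ whenever that maximizer exists and is unique. Granting such an $m$, everything follows mechanically: by {\rm [{\bf A4}]} the field $\bbZ$ has a unique maximizer almost surely, so $\hat u=m(\bbZ)$ a.s.; and since $\bbV\overset{d}{=}\bbZ$ as $C(\bbR^\sfp)$-valued random variables, the very same functional applied to $\bbV$ must reproduce the maximizing behaviour, yielding both the strict inequality (\ref{1111}) and, through the measurable map $f\mapsto(f,m(f))$, the joint identity (\ref{11111}). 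The delicate point is that $U$ need not be bounded, so the (unique) global maximizer need not be \emph{well separated}: the supremum could be approached along a sequence of $U$ escaping to infinity, which rules out naively taking limits of approximate maximizers over all of $U$.

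To construct $m$, I would exhaust $U$ by the compact sets $U(n)=U\cap\overline{B_n}$, which are compact because $U$ is closed by Lemma \ref{lem1}(i). On the subset $A_n\subset C(\bbR^\sfp)$ consisting of those $f$ whose maximizer over the \emph{compact} set $U(n)$ is unique, the maximizer $m_n(f)=\arg\max_{U(n)}f$ is a Borel function of $f$: the value $\max_{U(n)}f$ is continuous in $f$ (restriction $f\mapsto f|_{U(n)}$ is continuous for $d_\infty$, and $g\mapsto\max_{U(n)}g$ is Lipschitz on $C(U(n))$), the set $A_n$ is cut out by countably many conditions comparing near-maximal values of $f$ over a fixed countable dense subset $D_n\subseteq U(n)$, and on $A_n$ the point $m_n(f)$ is recovered as a limit of near-maximizers drawn from $D_n$. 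This is the standard measurability of the argmax over a compact set. I then put $m(f)=\lim_{n\to\infty}m_n(f)$ on the measurable set $\tilde A=\{f: f\in A_n\text{ for all large }n\text{ and }\lim_n m_n(f)\text{ exists}\}$.

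It remains to verify that $\tilde A$ carries both fields and that $m$ selects the correct point. If $f$ has a unique maximizer $u^\ast\in U$ with $f(u^\ast)=\sup_U f$, then for every $n\geq|u^\ast|$ one has $\max_{U(n)}f=f(u^\ast)$ and $\arg\max_{U(n)}f=\{u^\ast\}$, so $f\in A_n$ with $m_n(f)=u^\ast$; hence $f\in\tilde A$ and $m(f)=u^\ast$. By {\rm [{\bf A4}]} the field $\bbZ$ lies in this class almost surely, giving $\bbZ\in\tilde A$ a.s. and $m(\bbZ)=\hat u$ a.s. Since $\tilde A$ is measurable and $\bbV\overset{d}{=}\bbZ$, also $\bbV\in\tilde A$ a.s. Combining this with the hypothesis that $\hat v\in U$ attains $\sup_U\bbV$: for all large $n$ one has $\hat v\in\arg\max_{U(n)}\bbV=\{m_n(\bbV)\}$, so a second global maximizer $u'\neq\hat v$ would, for $n$ large enough to contain both, contradict the uniqueness defining $A_n$; this proves (\ref{1111}) and shows $m(\bbV)=\hat v$ a.s. Applying the Borel map $f\mapsto(f,m(f))$ to $\bbV\overset{d}{=}\bbZ$ then gives $(\bbV,\hat v)=(\bbV,m(\bbV))\overset{d}{=}(\bbZ,m(\bbZ))=(\bbZ,\hat u)$, which is (\ref{11111}). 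The main obstacle, where I would spend most of the care, is exactly this measurability bookkeeping for $m_n$ and $\tilde A$ together with the unboundedness of $U$; once the argmax is realised as a genuine measurable functional of the path, the transfer across the equality in law is automatic.
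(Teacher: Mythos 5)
Your proposal is correct, but it follows a genuinely different route from the paper's own proof. The paper never constructs an argmax functional: it dismisses (\ref{1111}) as immediate from $\bbV\overset{d}{=}\bbZ$ and [{\bf A4}] (implicitly using that non-uniqueness of the maximizer over $U$ is a Borel event in $C(\bbR^\sfp)$), and it proves (\ref{11111}) by checking equality of the two laws on open rectangles $G\times O$: after truncating to $\overline{B_R}$, the event $\{\hat v\in O,\ \hat v\in\overline{B_R}\}$ is replaced (up to terms that vanish as $R\to\infty$, since $\hat v$ and $\hat u$ are finite a.s.) by the sup-comparison event $\{\sup_{O\cap U(R)}\bbV-\sup_{O^c\cap U(R)}\bbV>0\}$, which is a Borel functional of the field $\bbV$ alone; its probability is transferred to $\bbZ$ by equality in law, the substitution is reversed for $(\bbZ,\hat u)$ using [{\bf A4}], and a $\pi$-system argument over such rectangles finishes. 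You instead make explicit what the paper leaves implicit, namely that the maximizer is a measurable functional $m$ of the field, built through the compact exhaustion $U(n)=U\cap\overline{B_n}$ and standard argmax measurability over compact sets. Your route buys a cleaner logical structure: both (\ref{1111}) and (\ref{11111}) drop out of the single a.s. identities $\hat v=m(\bbV)$ and $\hat u=m(\bbZ)$, the transfer across equality in law is automatic once $m$ is Borel, and the device is reusable elsewhere. The price is the measurability bookkeeping for $A_n$, $m_n$ and $\tilde A$, which the paper's argument avoids entirely, since sup-comparison events only require continuity of $f\mapsto\sup_K f$ on compact $K$ with respect to $d_\infty$; in exchange the paper pays with the fussier truncation-and-limit computation and a separate (if one-line) treatment of (\ref{1111}). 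Your handling of the unboundedness of $U$ is also sound: membership in $A_n$ for all large $n$ is exactly what excludes a second global maximizer of $\bbV$ (two global maximizers would both attain $\max_{U(n)}\bbV$ once $n$ is large enough to contain both), and convergence of the $m_n$ is guaranteed on the events you actually use, so no gap remains.
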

	
	\begin{proof}
	 Since $\bbV \overset{d}{=} \bbZ$, (\ref{1111}) is obviously holds from [{\bf A4}].
		We show (\ref{11111}). Take any open sets $G\in C(\bbR^\sfp)$ and $O\in\bbR$. It is sufficient to show that 
		{\colorr \beas
		P\big[(\bbV, \hat v)\in G\times O\big]=P\big[(\bbZ, \hat u)\in G\times O\big].
		\eeas
		For this,
		\beas
		P\big[(\bbV, \hat v)\in G\times O\big]&=&\lim_{R \to \infty}P\big[(\bbV, \hat v)\in G\times O, \hat v \in \ol B_R\big]\\
		&=&\lim_{R \to \infty} P\big[\bbV\in G, ~\sup_{u\in O\cap U(R)}\bbV(u)-\sup_{u\in O^c\cap U(R)}\bbV(u)>0, \,\, \hat v \in \ol B_R\big]\\
		&=&\lim_{R \to \infty} P\big[\bbV\in G, ~\sup_{u\in O\cap U(R)}\bbV(u)-\sup_{u\in O^c\cap  U(R)}\bbV(u)>0\big]\\
		&=&\lim_{R \to \infty} P\big[\bbZ\in G, ~\sup_{u\in O\cap U(R)}\bbZ(u)-\sup_{u\in O^c\cap  U(R)}\bbZ(u)>0\big]\\
		&=&\lim_{R \to \infty} P\big[\bbZ\in G, ~\sup_{u\in O\cap U(R)}\bbZ(u)-\sup_{u\in O^c\cap U(R)}\bbZ(u)>0, \,\, \hat u \in \ol B_R\big]\\
		&=&\lim_{R \to \infty}P\big[(\bbZ, \hat u)\in G\times O, \hat u \in \ol B_R\big]
		\\&=&P\big[(\bbZ, \hat u)\in G\times O\big].
		\eeas}
		
		\end{proof}

\begin{proof}[Proof of  Theorem \ref{thm1}.] First, we assume $[{\bf A1}]^{\flat\flat}$  and  [{\bf A2}]-[{\bf A4}], and   prove (\ref{hatu}) i.e.
	\beas
	\hat{u}_T\overset{d_s(\calg)}{\rightarrow}\hat u.
	\eeas Let $Y$ be any  $\calg$-measurable non-negative random variable which is bounded and satisfies  that $E[Y]>0$. Define a probability measure $P_Y$ as
	\beas
	P_Y[A]=\frac{E[1_AY]}{E[Y]} \qquad(A \in \calf).
	\eeas 
	Let $O$ be an  arbitrary open set of $\mathbb{R}^\sfp$. It is sufficient to show that \begin{eqnarray*}
		\underset{T \rightarrow \infty}{\varliminf}P_Y[\hat{u}_T \in O] \geq P_Y[\hat{u} \in O].
	\end{eqnarray*}
	It is obvious when $U\cap O = \phi$, so we consider only the case where $U \cap O \neq \phi$. Then there exists $R_0 > 0$ satisfying $U(R_0/2) \cap O \neq \phi$.  From  Lemma \ref{lem1} (ii), for sufficient large $T \in \mathbb{T}$, $U_T(R_0) \cap O \neq \phi$.  In the following, $T$  will be such large numbers, and let $R \geq R_0$ and $\epsilon >0$.
	We have	
	\begin{eqnarray*}
		&\underset{T \rightarrow \infty}{\varliminf}&P_Y[\hat{u}_T \in O] 
		\\&\geq& 
		\underset{T \rightarrow \infty}{\varliminf}
		P_Y[\hat{u}_T \in O, \hat{u}_T \in \overline{B_R}] 
		\\&\geq&  
		\underset{T \rightarrow \infty}{\varliminf}
		P_Y\bigg[\sup_{u \in U_T(R) \cap O} \mathbb{Z}_T (u, \hat\tau_T)
		- \sup_{u \in U_T(R) \cap O^c } \mathbb{Z}_T(u, \hat\tau_T) >0, \hat{u}_T \in \overline{B_R}\bigg] 
		\\&\geq& 
		\underset{T \rightarrow \infty}{\varliminf}
		P_Y\bigg[\sup_{ u \in U_T(R) \cap O} \mathbb{Z}_T (u, \hat\tau_T)
		- \sup_{ u \in U_T(R) \cap O^c} \mathbb{Z}_T(u, \hat\tau_T) >0\bigg] 
		-   \underset{T \rightarrow \infty}{\varlimsup}P_Y\big[ \hat{u}_T \notin \overline{B_R}\big]  
		\\&\geq& 
		\underset{T \rightarrow \infty}{\varliminf}
		P_Y\bigg[\sup_{U_T(R) \cap O} \mathbb{V}_T 
		- \sup_{U_T(R) \cap O^c} \mathbb{V}_T -2\sup_{U_T(R) \times \mathcal{T}}|\mathbb{V}_T - \mathbb{Z}_T| >0\bigg] 
		-   \underset{T \rightarrow \infty}{\varlimsup}P_Y\big[ \hat{u}_T \notin \overline{B_R} \big] .
	\end{eqnarray*}
	By $[{\bf A1}]^{\flat\flat}$, the second term  on the right-hand side
	converges to 0 as $R \rightarrow 0$. 
	By [{\bf A2}], the first term can be evaluated as follows:
	\begin{eqnarray*} 
		&\underset{T \rightarrow \infty}{\varliminf}&
		P_Y\bigg[\sup_{U_T(R) \cap O} \mathbb{V}_T 
		- \sup_{U_T(R) \cap O^c} \mathbb{V}_T -2\sup_{U_T(R) \times \mathcal{T}}|\mathbb{V}_T - \mathbb{Z}_T| >0\bigg] 
		\\ &\geq& 
		\underset{T \rightarrow \infty}{\varliminf}
		P_Y\bigg[\sup_{U_T(R) \cap O} \mathbb{V}_T 
		- \sup_{U_T(R) \cap O^c} \mathbb{V}_T >2\sup_{U_T(R) \times \mathcal{T}}|\mathbb{V}_T - \mathbb{Z}_T|,  \sup_{U_T(R) \times \mathcal{T}}|\mathbb{V}_T - \mathbb{Z}_T|< \epsilon/2\bigg] 
		\\ &\geq& 
		\underset{T \rightarrow \infty}{\varliminf}
		P_Y\bigg[\sup_{U_T(R) \cap O} \mathbb{V}_T 
		- \sup_{U_T(R) \cap O^c} \mathbb{V}_T >\epsilon\bigg] 
		- \underset{T \rightarrow \infty}{\varlimsup}
		P_Y\bigg[ \sup_{U_T(R) \times \mathcal{T}}|\mathbb{V}_T 
		- \mathbb{Z}_T|\geq \epsilon/2\bigg] 
		\\ &=&
		\underset{T \rightarrow \infty}{\varliminf}
		P_Y\bigg[\sup_{U_T(R) \cap O} \mathbb{V}_T 
		- \sup_{U_T(R) \cap O^c} \mathbb{V}_T >\epsilon\bigg] .
	\end{eqnarray*}
	Therefore, it is sufficient to show that
	\begin{eqnarray*} 
		\underset{\epsilon \rightarrow 0}{\varliminf}\underset{R \rightarrow \infty}{\varliminf}	\underset{T \rightarrow \infty}{\varliminf}
		P_Y\bigg[\sup_{U_T(R) \cap O} \mathbb{V}_T 
		- \sup_{U_T(R) \cap O^c} \mathbb{V}_T >\epsilon\bigg] 
		\geq 
		P_Y\big[\hat{u} \in O\big]. 
	\end{eqnarray*}
Let $\delta$ be a positive number.
Considering Lemma \ref{lem1} (ii) and (iii), we have
\beas
&&\underset{\epsilon \rightarrow 0}{\varliminf}\underset{R \rightarrow \infty}{\varliminf}	\underset{T \rightarrow \infty}{\varliminf}
P_Y\bigg[\sup_{U_T(R) \cap O} \mathbb{V}_T 
- \sup_{U_T(R) \cap O^c} \mathbb{V}_T >\epsilon\bigg] \\
&\geq& \underset{\epsilon \rightarrow 0}{\varliminf}\underset{R \rightarrow \infty}{\varliminf}	\underset{\delta \rightarrow 0}{\varliminf}	\underset{T \rightarrow \infty}{\varliminf}
P_Y\bigg[\sup_{U_T(R) \cap O} \mathbb{V}_T 
- \sup_{(U(R) \cap O^c)^\delta} \mathbb{V}_T >\epsilon\bigg]  \qquad\big(\because {\rm Lemma ~\ref{lem1}~(iii)}\big)\\
&\geq& \underset{\epsilon \rightarrow 0}{\varliminf}\underset{R \rightarrow \infty}{\varliminf}	\underset{\delta \rightarrow 0}{\varliminf}	\underset{T \rightarrow \infty}{\varliminf}
P_Y\bigg[\sup_{(U_T(R) \cap O)^\delta} \mathbb{V}_T 
- \sup_{U(R) \cap O^c} \mathbb{V}_T >2\epsilon\bigg]  \qquad\big(\because \text{(\ref{2110240341}) of [{\bf A2}]}\big)\\
&\geq& \underset{\epsilon \rightarrow 0}{\varliminf}\underset{R \rightarrow \infty}{\varliminf} \underset{T \rightarrow \infty}{\varliminf}
P_Y\bigg[\sup_{U(R/2) \cap O} \mathbb{V}_T 
- \sup_{U(R) \cap O^c} \mathbb{V}_T >2\epsilon\bigg]  \qquad\big(\because {\rm Lemma ~\ref{lem1}~(ii)}\big)\\
&\geq& \underset{\epsilon \rightarrow 0}{\varliminf}\underset{R \rightarrow \infty}{\varliminf} 
P_Y\bigg[\sup_{U(R/2) \cap O} \bbZ 
- \sup_{U(R) \cap O^c} \bbZ >2\epsilon\bigg]  \qquad\big(\because (\text{\ref{2110240341}) of [{\bf A2}]}\big)
\eeas
Also, 
\beas
\underset{\epsilon \rightarrow 0}{\varliminf}\underset{R \rightarrow \infty}{\varliminf} 
P_Y\bigg[\sup_{U(R/2) \cap O} \bbZ 
- \sup_{U(R) \cap O^c} \bbZ >2\epsilon\bigg]  &\geq& \underset{\epsilon \rightarrow 0}{\varliminf}\underset{R \rightarrow \infty}{\varliminf} 
P_Y\bigg[\sup_{U(R/2) \cap O} \bbZ 
- \sup_{U \cap O^c} \bbZ >2\epsilon\bigg]  \\
&=& \underset{\epsilon \rightarrow 0}{\varliminf}
P_Y\bigg[\sup_{U \cap O} \bbZ 
- \sup_{U \cap O^c} \bbZ >2\epsilon\bigg] \\
&=& 
P_Y\bigg[\sup_{U \cap O} \bbZ 
- \sup_{U \cap O^c} \bbZ >0\bigg] \\
&\geq&P_Y\bigg[\hat u \in O\bigg]\qquad \big(\because [{\bf A4}]\big).
\eeas
Thus, under  $[{\bf A1}]^{\flat\flat}$ and [{\bf A2}]-[{\bf A4}], (\ref{hatu}) holds. Therefore,  under [{\bf A1}]-[{\bf A5}], (\ref{hatu}) holds. 

Next, we also assume [{\bf A5}], and prove (\ref{hatu-hatv}) i.e.
\beas
\hat u_T-\hat v_T=o_P(1).
\eeas

{\noindent{\bf (i)} We prove that for any $R_1>0$, 
	\begin{eqnarray}\label{i}
	\big(\bbV_T, \hat v_T\big) \overset{d}{\rightarrow} \big(\bbZ, \hat u\big) \qquad in~C(\overline{B_{R_1}})\times \bbR^\sfp.
	\end{eqnarray}
	It is sufficient (and also necessary) to  prove
	\begin{eqnarray}\label{i1}
	\big(\bbV_T, \hat v_T\big) \overset{d}{\rightarrow} \big(\bbZ, \hat u\big) \qquad in~C(\bbR^\sfp)\times \bbR^\sfp.
	\end{eqnarray}
	For any  diverging sequence $\{S_k\}_{k=1}^\infty$ in $\bbT$, due to the tightness of $\big\{(\bbV_{S_k}, \hat v_{S_k})\big\}_{k=1}^\infty$, we can find a subsequence $\{T_n\}_{n=1}^\infty$ of $\{S_k\}_{k=1}^\infty$ such that
	\begin{eqnarray*}
		\big(\bbV_{T_n}, \hat v_{T_n}\big) \overset{d}{\rightarrow} \big(\bbV_{\infty}, \hat v_{\infty}\big) \qquad in~C(\bbR^\sfp)\times \bbR^\sfp.
	\end{eqnarray*}
Then since $\bbV_{T}(\hat v_T)=\sup_{U}\bbV_T(u)~a.s.$, we have
\beas
P\bigg[\forall u \in U ; \bbV_\infty(\hat v_\infty) \geq \bbV_\infty(u)\bigg] &=&P\bigg[\forall R_1>0  ; \bbV_\infty(\hat v_\infty) \geq \sup_{u \in U(R_1)}\bbV_\infty(u)\bigg] \\
&=& \lim_{R_1\to\infty}P\bigg[ \bbV_\infty(\hat v_\infty) - \sup_{u \in U(R_1)}\bbV_\infty(u)\geq 0\bigg]\\
&\geq& \lim_{R_1\to\infty}\varlimsup_{n \to \infty}P\bigg[ \bbV_{T_n}(\hat v_{T_n}) - \sup_{u \in U(R_1)}\bbV_{T_n}(u)\geq 0\bigg]\yeq 1.
\eeas
	Thus, $\bbV_\infty(\hat v_\infty)=\sup_{u \in U}\bbV_\infty(u)~a.s.$. Also,  [{\bf A2}] implies that $\bbV_\infty\overset{d}{=}\bbZ$. 
	Then from Lemma \ref{lem3}, 
	\begin{eqnarray*}
		\big(\bbV_{\infty}, \hat v_{\infty}\big) \overset{d}{=} \big(\bbZ, \hat u\big).
	\end{eqnarray*}
	Thus, (\ref{i1}) holds, which implies  (\ref{i}).
}

\vspace{3mm}
\noindent{\bf (ii)} Using (\ref{i}), we prove (\ref{hatu-hatv}).
Take any $\eta>0$, and define a random set $O_{T}$ depending on $T\in\bbT$ as
\beas
O_{T}(\omega)=\{u\in \bbR^\sfp ; |u-\hat v_T(\omega)|< \eta\}\qquad(\omega \in \Omega).
\eeas
Also, for each $v\in \bbR^\sfp$, define $\phi_v \in C(\bbR^\sfp)$ as 
\beas
\phi_v(u)=2-\bigg\{\bigg(\frac{2|u-v|}{\eta} \vee {1}\bigg)\wedge2 \bigg\} \qquad (u \in \bbR^\sfp).
\eeas
Then for any $R>0$,
\beas
&&\bigg\{\omega ;\,\hat u_T\in O_{T}, \hat u_T \in \overline{B_R}\bigg\}\\
&\supset&
\bigg\{\omega ; \sup_{(U_T(R) \cap O_T) \times \mathcal{T}} \mathbb{Z}_T 
- \sup_{(U_T(R) \cap O_T^c) \times \mathcal{T}} \mathbb{Z}_T >0, \hat{u}_T \in \overline{B_R}\bigg\}\\
&=&
\bigg\{\omega ; \sup_{(u, \tau)\in U_T(R)  \times \mathcal{T}} \mathbb{Z}_T(u, \tau)1_{O_T} (u)
- \sup_{(u, \tau)\in U_T(R) \times \mathcal{T}} \mathbb{Z}_T(u, \tau)\big(1-1_{O_T}(u)\big) >0, \hat{u}_T \in \overline{B_R}\bigg\}\\
&\supset&
\bigg\{\omega ; \sup_{(u, \tau)\in U_T(R)  \times \mathcal{T}} \mathbb{Z}_T(u, \tau)\phi_{\hat v_T} (u)
- \sup_{(u, \tau)\in U_T(R) \times \mathcal{T}} \mathbb{Z}_T(u, \tau)\big(1-\phi_{\hat v_T}(u)\big) >0, \hat{u}_T \in \overline{B_R}\bigg\}.
\eeas
Therefore,{
\beas
&&\underset{T\rightarrow\infty}{\varliminf}P\bigg[|\hat u_T-\hat v_T|< \eta\bigg]
\yeq \underset{T\rightarrow\infty}{\varliminf}P\bigg[\hat u_T\in O_{T}\bigg]\\
 &\geq&  \underset{R\rightarrow\infty}{\varliminf}\underset{T\rightarrow\infty}{\varliminf}P\bigg[\hat u_T\in O_{T}, \hat u_T \in \overline{B_R}\bigg]\\
&\geq&
\underset{R\rightarrow\infty}{\varliminf}\underset{T\rightarrow\infty}{\varliminf}P\bigg[\sup_{ U_T(R)  \times \mathcal{T}} \mathbb{Z}_T(u, \tau)\phi_{\hat v_T} (u)
- \sup_{U_T(R) \times \mathcal{T}} \mathbb{Z}_T(u, \tau)\big(1-\phi_{\hat v_T}(u)\big) >0,\, \hat{u}_T \in \overline{B_R}\bigg] 
\\&=& 
\underset{R\rightarrow\infty}{\varliminf}\underset{T\rightarrow\infty}{\varliminf}
P\bigg[\sup_{ U_T(R)  \times \mathcal{T}} \mathbb{Z}_T(u, \tau)\phi_{\hat v_T} (u)
- \sup_{ U_T(R) \times \mathcal{T}} \mathbb{Z}_T(u, \tau)\big(1-\phi_{\hat v_T}(u)\big) >0\bigg] ~~\big(\because [{\bf A1}]^{\flat\flat}\big).
\eeas}
Also,
\beas 
&&\underset{R\rightarrow\infty}{\varliminf}\underset{T\rightarrow\infty}{\varliminf}
P\bigg[\sup_{(u, \tau)\in U_T(R)  \times \mathcal{T}} \mathbb{Z}_T(u, \tau)\phi_{\hat v_T} (u)
- \sup_{(u, \tau)\in U_T(R) \times \mathcal{T}} \mathbb{Z}_T(u, \tau)\big(1-\phi_{\hat v_T}(u)\big) >0\bigg]\\&\geq& 
\underset{\epsilon\rightarrow0}{\varliminf}\underset{R\rightarrow\infty}{\varliminf}\underset{T\rightarrow\infty}{\varliminf}
P\bigg[\sup_{u\in U_T(R) } \mathbb{V}_T(u)\phi_{\hat v_T} (u)
- \sup_{u\in U_T(R) } \mathbb{V}_T(u)\big(1-\phi_{\hat v_T}(u)\big) >\epsilon\bigg] 
\qquad \big(\because [{\bf A2}]\big)\\
&\geq& \underset{\epsilon \rightarrow 0}{\varliminf}\underset{R \rightarrow \infty}{\varliminf}	\underset{\delta \rightarrow 0}{\varliminf}	\underset{T \rightarrow \infty}{\varliminf}
P\bigg[\sup_{u\in U_T(R) } \mathbb{V}_T(u)\phi_{\hat v_T} (u)
- \sup_{u\in U(R)^\delta  } \mathbb{V}_T(u)\big(1-\phi_{\hat v_T}(u)\big) >\epsilon\bigg] \\
 &&\big(\because {\rm Lemma ~\ref{lem1}~(iii)}\big)\\
&\geq& \underset{\epsilon \rightarrow 0}{\varliminf}\underset{R \rightarrow \infty}{\varliminf}	\underset{\delta \rightarrow 0}{\varliminf}	\underset{T \rightarrow \infty}{\varliminf}
P\bigg[\sup_{u\in (U_T(R))^\delta } \mathbb{V}_T(u)\phi_{\hat v_T} (u)
- \sup_{u\in U(R) } \mathbb{V}_T(u)\big(1-\phi_{\hat v_T}(u)\big) >2\epsilon\bigg] \\ 
&&\big(\because \text{ tightness of  $\{\hat v_T\}_{T\geq T_0}$ and (\ref{2110240341}) } \big)\\
&\geq& \underset{\epsilon \rightarrow 0}{\varliminf}\underset{R \rightarrow \infty}{\varliminf} \underset{T \rightarrow \infty}{\varliminf}
P\bigg[\sup_{u\in U(R/2)} \mathbb{V}_T(u)\phi_{\hat v_T} (u)
- \sup_{u\in U(R) } \mathbb{V}_T(u)\big(1-\phi_{\hat v_T}(u)\big) >2\epsilon\bigg] \\ &&\big(\because {\rm Lemma ~\ref{lem1}~(ii)}\big)
\eeas
Finally,
\beas
&& \underset{\epsilon \rightarrow 0}{\varliminf}\underset{R \rightarrow \infty}{\varliminf} \underset{T \rightarrow \infty}{\varliminf}
P\bigg[\sup_{u\in U(R/2)} \mathbb{V}_T(u)\phi_{\hat v_T} (u)
- \sup_{u\in U(R) } \mathbb{V}_T(u)\big(1-\phi_{\hat v_T}(u)\big) >2\epsilon\bigg] \\
&\geq& \underset{\epsilon \rightarrow 0}{\varliminf}\underset{R \rightarrow \infty}{\varliminf} 
P\bigg[\sup_{u\in U(R/2)} \mathbb{Z}(u)\phi_{\hat u} (u)
- \sup_{u\in U(R) } \mathbb{Z}(u)\big(1-\phi_{\hat u}(u)\big) >2\epsilon\bigg]\qquad\big(\because (\ref{i})\big) \\
&\geq& \underset{\epsilon \rightarrow 0}{\varliminf}\underset{R \rightarrow \infty}{\varliminf} 
P\bigg[\sup_{u\in U(R/2)} \mathbb{Z}(u)\phi_{\hat u} (u)
- \sup_{u\in U } \mathbb{Z}(u)\big(1-\phi_{\hat u}(u)\big) >2\epsilon\bigg] \\
&=& \underset{\epsilon \rightarrow 0}{\varliminf}P\bigg[\sup_{u\in U} \mathbb{Z}(u)\phi_{\hat u} (u)
- \sup_{u\in U } \mathbb{Z}(u)\big(1-\phi_{\hat u}(u)\big) >2\epsilon\bigg] \\
&=& P\bigg[\sup_{u\in U} \mathbb{Z}(u)\phi_{\hat u} (u)
- \sup_{u\in U } \mathbb{Z}(u)\big(1-\phi_{\hat u}(u)\big) >0\bigg] \yeq 1 \qquad\big(\because [{\bf A4}]\big).
\eeas
Thus  $\underset{T\rightarrow\infty}{\varliminf}P\bigg[|\hat u_T-\hat v_T|< \eta\bigg]=1$ for any $\eta>0$, and we obtain (\ref{hatu-hatv}).

\end{proof}

\section{Proof of Theorems \ref{ex2thm} and \ref{SDEtaudisappear}}\label{section6}
 
 \begin{proof}[Proof of Theorem \ref{ex2thm}] { The proof is similar   to  Uchida and Yoshida \cite{uchida2013quasi}.} Take $\sfp\times \sfp$ matrices $a_n$ as $a_n ={\rm diag}(n^{-\frac{1}{2}}, ..., n^{-\frac{1}{2}})$. Define  $\bbH_n$ as $\bbH_n(\theta)=\mathbb\Psi_n(\theta)$. 
 	We ensure [{\bf B1}]-[{\bf B3}], [{\bf A3}] and [{\bf A4}] for $\bbT=\bbN$ to apply Theorem \ref{thmB}. 
 	For any $1 \leq i \leq n$ and any $\theta \in \Theta$, denote $S(X_{t_{i-1}}, \theta)$ and $\sigma(X_{t_{i-1}}, a^*)$ by $S_{i-1}(\theta)$ and $\sigma_{i-1}$, respectively. Then
 	{ \bea
 	\Delta_i Y &=& \int_{t_{i-1}}^{t_i}(b_s-\beta_s )ds+\int_{t_{i-1}}^{t_i}(\beta_s-\beta_{t_{i-1}} )ds+h\beta_{t_{i-1}}+h\frac{\int_{t_{i-1}}^{t_i}\big(\sigma(X_s, a^*)-\sigma_{i-1} \big)dw_s}{h}\nonumber\\
 	&&+\sqrt h\frac{\sigma_{i-1}\Delta_iw}{\sqrt h},\label{DeltaY}
 	\eea
 	where $\Delta_iw =w_{t_{i}}-w_{t_{i-1}}$, and $\beta=\beta(n)$ is an $\F$-adapted continuous process \footnote{Unlike (\ref{defbeta}), we may define $\beta$ as $\beta_t= \frac{1}{\epsilon_n}\int_{(t-\epsilon_n)\vee 0}^t b_s ds$ $(t\geq 0)$ outside a $P$-null set where $b(\omega) \notin L^1([0, T])$. In this case, completion of $\F$ may be necessary for  the adaptedness of $\beta$. }
 	defined as \bea\label{defbeta}
 	\beta_t \yeq \frac{1}{\epsilon_n}\int_{(t-\epsilon_n)\vee 0}^t b_s 1_{\{|b_s| \leq \epsilon_n^{-1}\}}ds \qquad(t \geq 0)
 	\eea for some positive sequence $\epsilon_n$ with $\epsilon_n \to 0$ and $h\epsilon_n^{-1} \to 0$.
 From [{\bf I1}] and $h\epsilon_n^{-1} \to 0$, for any $p>1$,
\bea\label{evalubeta}
E\bigg[\sum_{i=1}^n\bigg\{\int_{t_{i-1}}^{t_i}|b_s-\beta_s |^pds+\int_{t_{i-1}}^{t_i}|\beta_s-\beta_{t_{i-1}} |^pds\bigg\}\bigg]  \to 0 \qquad(n \to \infty).
\eea
 Therefore,   
 	\bea\label{SDEsame}
 	-\frac{1}{2 \sqrt{n}h}\sum_{i=1}^n\partial_\theta S_{i-1}^{-1}( \theta^*)\big[(\Delta_iY)^{\otimes2}\big]=-\frac{1}{2}\sum_{i=1}^n\partial_\theta S_{i-1}^{-1}( \theta^*)\bigg[\bigg(\frac{\sigma_{i-1}\Delta_iw}{\sqrt h}\bigg)^{\otimes2}\bigg]+J_n +R_n,
 	\eea
 	where 
 	\beas
 	J_n &=& -\frac{\sqrt h}{\sqrt{n}}\sum_{i=1}^n\partial_\theta S_{i-1}^{-1}( \theta^*)\bigg[\bigg(\frac{\sigma_{i-1}\Delta_iw}{\sqrt h}\bigg)\otimes
 	\bigg(\beta_{t_{i-1}}+\frac{\int_{t_{i-1}}^{t_i}\big(\sigma(X_s, a^*)-\sigma_{i-1} \big)dw_s}{h}\bigg)\bigg]\\
 	&&-\frac{h}{2\sqrt{n}}\sum_{i=1}^n\partial_\theta S_{i-1}^{-1}( \theta^*)\bigg[
 	\bigg(\beta_{t_{i-1}}+\frac{\int_{t_{i-1}}^{t_i}\big(\sigma(X_s, a^*)-\sigma_{i-1} \big)dw_s}{h}\bigg)^{\otimes2}\bigg],
 	\eeas
 	and $R_n$ is the residual term involved with the first and second terms of (\ref{DeltaY}) satisfying $R_n = o_P(1)$ from (\ref{evalubeta}) and [{\bf I1}]-[{\bf I4}]. Using It\^o  formula and [{\bf I2}]-[{\bf I4}], we also obatin $J_n = o_P(1)$.}
 	Then for any $u \in \bbR^\sfp$,
 	\bea\label{ex2partial1}
 	&&\frac{1}{\sqrt n}\partial_\theta\bbH_n(\theta^*)[u]\nonumber\\&=& -\frac{1}{2\sqrt{n} }\sum_{i=1}^n\bigg\{\partial_\theta S_{i-1}^{-1}( \theta^*)\big[u, h^{-1}(\sigma_{i-1}\Delta_iw)^{\otimes2}\big]+\partial_\theta\log|S_{i-1}|(\theta^*)[u]\bigg\}+o_P(1)|u|\nonumber\\
 	&=&-\frac{1}{2\sqrt{n} }\sum_{i=1}^n{\rm Tr}\bigg(\sigma_{i-1}^\prime\partial_\theta S_{i-1}^{-1}(\theta^*)\sigma_{i-1}\big\{h^{-1}(\Delta_iw)^{\otimes2}-I_\sfr\big\}[u]\bigg)+o_P(1)|u|.
 	\eea
 	Similarly, for any $\theta \in \Theta$ and any $u \in \bbR^\sfp$,
 	\bea\label{ex2partial2}
 	&&\frac{1}{n}\partial_\theta^2\bbH_n(\theta)[u^{\otimes2}]\nonumber\\&=&  -\frac{1}{2n }\sum_{i=1}^n\bigg\{\partial^2_\theta S_{i-1}^{-1}( \theta)\big[u^{\otimes2}, h^{-1}{(\sigma_{i-1}\Delta_iw)^{\otimes2}}\big]+\partial^2_\theta\log|S_{i-1}|(\theta)[u^{\otimes2}]\bigg\}+o_P(1)|u|^2.
 	\eea
 	where $o_P(1)$ satisfies $\sup_{\theta \in \Theta}|o_P(1)| \to^P 0$  {
 	by using  Sobolev's inequalities, noting that  $\partial_\theta^2S^{-1}_{i-1}(\theta)$ simply consists of rational functions of $\theta$. 
 }  
 	Also, for any $\theta \in \Theta$,
 	\bea\label{ex2Y}
 	\bbY_n(\theta)
 	&=&\frac{1}{n}\big\{\bbH_n(\theta)-\bbH_n(\theta^*)\big\}\nonumber\\
 	&=&-\frac{1}{2n }\sum_{i=1}^n\bigg\{\big(S_{i-1}^{-1}( \theta)-S_{i-1}^{-1}( \theta^*)\big)\big[h^{-1}(\sigma_{i-1}\Delta_iw)^{\otimes2}\big]+\log\frac{|S_{i-1}(\theta)|}{|S_{i-1}(\theta^*)|}\bigg\}+o_P(1).
 	\eea
 	Then from the following Lemmas \ref{ex2lemDelta}-\ref{ex2lemY}, [{\bf B1}] and [{\bf B2}] hold, {where $\caln = \psi\big(\{A \in \cals_+^\sfm; \det(A)>0, \|A-A^*\| < \delta\}\big)$ for $\delta>0$ in (\ref{calnIto}).}  {\colorr Note that $\caln$ satisfies Condition (i) and (ii) in Section \ref{Quasisection1.1}, and $\caln \subset \Theta$.} From [{\bf I5}],  [{\bf B3}] also holds. From Example \ref{Expositivesemidefinite}, we obtain [{\bf A3}].
 	Also,  [{\bf A4}] obviously holds. Thus, (\ref{ex2hatucon}) holds.
 \end{proof}
 \begin{lemma}\label{ex2lemDelta}
 	\beas
 	\frac{1}{\sqrt n}\partial_\theta\bbH_n(\theta^*) \overset{d_s(\calf)}{\to}  \Delta(\theta^*).
 	\eeas
 \end{lemma}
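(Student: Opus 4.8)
The plan is to recognize the right-hand side of (\ref{ex2partial1}) as the sum of a martingale difference array plus a negligible remainder, and to apply a stable central limit theorem for triangular arrays. First I would set, for $u\in\bbR^\sfp$,
\[
\zeta_i^n(u)=-\frac{1}{2\sqrt n}{\rm Tr}\big(\sigma_{i-1}^\prime\partial_\theta S_{i-1}^{-1}(\theta^*)\sigma_{i-1}\{h^{-1}(\Delta_iw)^{\otimes2}-I_\sfr\}[u]\big),
\]
so that $\tfrac{1}{\sqrt n}\partial_\theta\bbH_n(\theta^*)[u]=\sum_{i=1}^n\zeta_i^n(u)+o_P(1)|u|$ by (\ref{ex2partial1}). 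Since $\Delta_iw$ is independent of $\calf_{t_{i-1}}$ with $E[(\Delta_iw)^{\otimes2}\mid\calf_{t_{i-1}}]=hI_\sfr$, we have $E[\zeta_i^n(u)\mid\calf_{t_{i-1}}]=0$, so $\{\zeta_i^n\}$ is a martingale difference array with respect to $(\calf_{t_i})_i$.

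Next I would compute the quadratic characteristic. Writing $M_{i-1}[u]=\sigma_{i-1}^\prime\partial_\theta S_{i-1}^{-1}(\theta^*)\sigma_{i-1}[u]$, which is symmetric, and using the Gaussian fourth-moment identity
\[
E\big[(h^{-1}\Delta_iw^j\Delta_iw^k-\delta_{jk})(h^{-1}\Delta_iw^l\Delta_iw^m-\delta_{lm})\mid\calf_{t_{i-1}}\big]=\delta_{jl}\delta_{km}+\delta_{jm}\delta_{kl},
\]
one obtains
\[
\sum_{i=1}^n E[\zeta_i^n(u)\zeta_i^n(v)\mid\calf_{t_{i-1}}]=\frac{1}{2n}\sum_{i=1}^n{\rm Tr}\big(M_{i-1}[u]M_{i-1}[v]\big).
\]
Because $t_i-t_{i-1}=T/n$ and $X$ is continuous, the right-hand side is a Riemann sum converging in probability to $\tfrac{1}{2T}\int_0^T{\rm Tr}(M_t[u]M_t[v])\,dt=\tfrac{1}{T}\int_0^T(u\rho_t)(v\rho_t)^\prime\,dt=\Gamma(\theta^*)[u,v]$, where $M_t[u]=\sigma^\prime(X_t,a^*)\partial_\theta S^{-1}(X_t,\theta^*)\sigma(X_t,a^*)[u]$; here the moment bounds in $[{\bf I1}]$--$[{\bf I3}]$ and the uniform ellipticity $[{\bf I4}]$ ensure integrability and justify the approximation.

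It then remains to verify the remaining hypotheses of the martingale stable CLT, as in Uchida and Yoshida \cite{uchida2013quasi}. The negligibility condition follows from $\sum_i E[|\zeta_i^n(u)|^4\mid\calf_{t_{i-1}}]=O(n^{-1})\to0$, again via the Gaussian moments of $\Delta_iw$ and the polynomial growth of the coefficients. The orthogonality $\sum_i E[\zeta_i^n(u)\Delta_iw\mid\calf_{t_{i-1}}]\overset{P}{\rightarrow}0$ holds exactly, since it involves a third Gaussian moment of $\Delta_iw$, which vanishes; likewise the conditional covariation of the array with every bounded $\calf$-martingale orthogonal to $w$ vanishes, the independent component $\tilde w$ in $[{\bf I2}]$ being what realizes the limit as a stochastic integral against a Wiener process $W$ independent of $\calf$. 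These conditions yield $\sum_i\zeta_i^n(u)\overset{d_s(\calf)}{\rightarrow}\tfrac{1}{\sqrt T}\int_0^T(u\rho_t)\,dW_t=\Delta(\theta^*)[u]$, and since the quadratic characteristic was computed for general $u,v$, the convergence holds jointly across the components of $\partial_\theta\bbH_n(\theta^*)$, giving the claim.

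The main obstacle I anticipate is the clean verification of the nesting/orthogonality conditions needed to identify the limit as conditionally Gaussian and independent of $\calf$, namely that the array has asymptotically no covariation with the driving Brownian motion $w$ nor with any bounded $\calf$-martingale, rather than the moment bookkeeping, which is routine given $[{\bf I1}]$--$[{\bf I4}]$. This is precisely the step where one must invoke a stable limit theorem for triangular martingale arrays and confirm its hypotheses in the present possibly degenerate-coefficient setting.
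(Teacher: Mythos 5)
Your proposal is correct and follows essentially the same route as the paper: both isolate the same martingale difference array from (\ref{ex2partial1}), verify the conditional covariance convergence to $\Gamma(\theta^*)$, the fourth-moment negligibility, and the orthogonality to $w$ and to bounded martingales orthogonal to $w$, and then invoke the stable limit theorem for martingale triangular arrays (the paper cites Theorem 3-2 of Jacod, which requires the bracket conditions at every $t\in[0,T]$, not just $t=T$, but your Riemann-sum argument applies verbatim on each $[0,t]$).
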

 \begin{proof}
 	From (\ref{ex2partial1}),
 	$\frac{1}{\sqrt n}\partial_\theta\bbH_n(\theta^*)-M^n=o_P(1)$, where $M^n$ is an $\bbR^\sfp$-valued random variable as for any $u \in \bbR^\sfp$,
 	\beas
 	M^n[u]
 	&=&-\frac{1}{2\sqrt{n} }\sum_{i=1}^n{\rm Tr}\bigg(\sigma_{i-1}^\prime\partial_\theta S_{i-1}^{-1}(\theta^*)\sigma_{i-1}\big\{h^{-1}(\Delta_iw)^{\otimes2}-I_\sfr\big\}[u]\bigg)\\
 	&=&-\frac{1}{2\sqrt{n} }\sum_{i=1}^nv^{\prime}\big(\sigma_{i-1}^\prime\partial_\theta S_{i-1}^{-1}(\theta^*)\sigma_{i-1}[u]\big)\,v\big(h^{-1}(\Delta_iw)^{\otimes2}-I_\sfr\big).
 	\eeas 			
 	{Define $\bbR^\sfp$-valued random variables $\chi_{i}=\chi_i(n)$ as for any $u \in \bbR^\sfp$,
 		\beas
 		\chi_i[u]\yeq -\frac{1}{2\sqrt{n} }v^{\prime}\big(\sigma_{i-1}^\prime\partial_\theta S_{i-1}^{-1}(\theta^*)\sigma_{i-1}[u]\big)\,v\big(h^{-1}(\Delta_iw)^{\otimes2}-I_\sfr\big).
 		\eeas
 		 Then for any $t \in [0, T]$,
 	\beas
 	&&\sum_{i=1}^{\lfloor nt/T\rfloor} E\big[(\chi_i)^{\otimes2}|\calf_{t_{i-1}}\big] \overset{P}{\to}\frac{1}{T}\int_0^t \rho_s^{\otimes2}ds,\qquad\sum_{i=1}^{\lfloor nt/T\rfloor} E\big[\chi_i \Delta_i w|\calf_{t_{i-1}}\big] \overset{P}{\to}0,\\
 	&&\sum_{i=1}^{n} E\big[|\chi_i|^4|\calf_{t_{i-1}}\big] \overset{P}{\to}0,\qquad\sum_{i=1}^{\lfloor nt/T\rfloor} E\big[\chi_i \Delta_i N|\calf_{t_{i-1}}\big] \overset{P}{\to}0\qquad\big(N \in \calm_b(w^{\perp})\big),
 	\eeas where $\Delta_iN=N_{t_{i}}-N_{t_{i-1}}$, and $\calm_b(w^{\perp})$ denotes the class of all bounded $\F$-martingales which is orthogonal to $w$.
 Thus, we can apply Theorem 3-2 of Jacod \cite{jacod1997continuous}.}
 \end{proof}
 \begin{lemma}\label{ex2lemGamma}
 	For any positive sequence $\delta_n$ with $\delta_n \to 0$,
 	\beas
 	\sup_{\theta \in \Theta, |\theta-\theta^*|<\delta_n}\bigg|\frac{1}{n}\partial_\theta^2\bbH_n(\theta)+\Gamma(\theta^*)\bigg| \overset{P}{\to} 0.
 	\eeas
 \end{lemma}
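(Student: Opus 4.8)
The plan is to prove the statement in two stages: first establish a uniform-in-$\theta$ law of large numbers identifying $\lim_n n^{-1}\partial_\theta^2\bbH_n(\theta)$ as a continuous random function $G(\theta)$ on all of $\Theta$, and then reduce the shrinking-neighborhood supremum to the continuity of $G$ at $\theta^*$ together with the matrix identity $G(\theta^*)=-\Gamma(\theta^*)$. Concretely, I set
\[
G(\theta)[u^{\otimes2}]:=-\frac1{2T}\int_0^T\Big\{\partial_\theta^2S^{-1}(X_t,\theta)\big[u^{\otimes2},S(X_t,\theta^*)\big]+\partial_\theta^2\log|S(X_t,\theta)|[u^{\otimes2}]\Big\}dt,
\]
and aim to show $\sup_{\theta\in\Theta}\big|n^{-1}\partial_\theta^2\bbH_n(\theta)-G(\theta)\big|\overset{P}{\to}0$ and $G(\theta^*)=-\Gamma(\theta^*)$, from which the claim follows by the triangle inequality
\[
\sup_{|\theta-\theta^*|<\delta_n}\Big|\tfrac1n\partial_\theta^2\bbH_n(\theta)+\Gamma(\theta^*)\Big|\le\sup_{\theta\in\Theta}\big|\tfrac1n\partial_\theta^2\bbH_n(\theta)-G(\theta)\big|+\sup_{|\theta-\theta^*|<\delta_n}\big|G(\theta)-G(\theta^*)\big|
\]
and the continuity of $G$.

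For the pointwise convergence I would start from the representation (\ref{ex2partial2}) and split the quadratic Wiener increment as $h^{-1}(\sigma_{i-1}\Delta_iw)^{\otimes2}=S_{i-1}(\theta^*)+\zeta_i$ with $\zeta_i:=h^{-1}(\sigma_{i-1}\Delta_iw)^{\otimes2}-S_{i-1}(\theta^*)$, using $E[h^{-1}(\Delta_iw)^{\otimes2}\mid\calf_{t_{i-1}}]=I_\sfr$ and $\sigma_{i-1}\sigma_{i-1}'=S_{i-1}(\theta^*)$. The predictable part $n^{-1}\sum_i\partial_\theta^2S_{i-1}^{-1}(\theta)[u^{\otimes2},S_{i-1}(\theta^*)]=T^{-1}\sum_i(\cdots)h$ is a Riemann sum converging to the first integrand of $G$ by continuity of $X$ and [{\bf I3}]--[{\bf I4}]; the $\log|S|$ term, carrying no stochastic increment, is likewise a Riemann sum. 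The remaining piece $n^{-1}\sum_i\partial_\theta^2S_{i-1}^{-1}(\theta)[u^{\otimes2},\zeta_i]$ is a sum of conditionally-centered terms, so its cross terms vanish under conditioning and a direct conditional-variance computation (the $\zeta_i$ have bounded conditional fourth moments because $\Delta_iw$ is Gaussian) bounds its $L^2$-norm by $O(n^{-1/2})$.

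To upgrade this to uniformity over $\Theta$, I would use the Sobolev embedding $W^{1,p}({\rm Int}\,\Theta)\hookrightarrow C(\Theta)$ for $p>\sfp$ exactly as in the proof of Theorem \ref{ex2thm}: writing $F_n(\theta)=n^{-1}\partial_\theta^2\bbH_n(\theta)-G(\theta)$, one controls $E\big[\sup_\theta|F_n(\theta)|^p\big]$ by $\int_\Theta E\big[|F_n(\theta)|^p+|\partial_\theta F_n(\theta)|^p\big]\,d\theta$ and shows the integrand tends to $0$ uniformly in $\theta$. Since $S(x,\cdot)$ is linear in $\theta$ (as $A=\psi^{-1}(\theta)$ is linear and $S=fAf'$), the quantities $\partial_\theta^2S^{-1}$, $\partial_\theta^3S^{-1}$, $\partial_\theta^2\log|S|$, $\partial_\theta^3\log|S|$ are rational in $\theta$ with denominators powers of $\det S$ bounded away from $0$ on $\calx\times\cala$ by [{\bf I4}], and with polynomial growth in $X$ controlled by the moment bounds [{\bf I1}]--[{\bf I3}]; this yields the required $\theta$-uniform $L^p$ estimates. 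This Sobolev moment bookkeeping, combined with the discretization control of $X$ and the martingale estimate above, is the main technical obstacle.

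Finally I would verify $G(\theta^*)=-\Gamma(\theta^*)$ as a pure matrix computation. Writing $\dot S=\partial_\theta S(X_t,\theta^*)[u]$ and using linearity $\partial_\theta^2 S\equiv0$, we have $\partial_\theta^2S^{-1}[u^{\otimes2}]=2S^{-1}\dot SS^{-1}\dot SS^{-1}$ and $\partial_\theta^2\log|S|[u^{\otimes2}]=-{\rm Tr}(S^{-1}\dot SS^{-1}\dot S)$, so that
\[
\partial_\theta^2S^{-1}[u^{\otimes2},S]+\partial_\theta^2\log|S|[u^{\otimes2}]={\rm Tr}\big(2S^{-1}\dot SS^{-1}\dot S\big)-{\rm Tr}\big(S^{-1}\dot SS^{-1}\dot S\big)={\rm Tr}\big(S^{-1}\dot SS^{-1}\dot S\big),
\]
whence $G(\theta^*)[u^{\otimes2}]=-\tfrac1{2T}\int_0^T{\rm Tr}\big(S^{-1}\partial_\theta SS^{-1}\partial_\theta S(X_t,\theta^*)[u^{\otimes2}]\big)dt=-\Gamma(\theta^*)[u^{\otimes2}]$ by the definition of $\Gamma(\theta^*)$. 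Because $G$ is almost surely continuous in $\theta$ (being built from smooth functions of $S(X_t,\cdot)$ integrated against the continuous process $X$), $\sup_{|\theta-\theta^*|<\delta_n}|G(\theta)-G(\theta^*)|\to0$ as $\delta_n\to0$, and the displayed triangle inequality then gives the asserted convergence.
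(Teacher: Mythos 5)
Your proof is correct and follows essentially the same route as the paper's: starting from (\ref{ex2partial2}), replacing $h^{-1}(\sigma_{i-1}\Delta_i w)^{\otimes2}$ by its compensator $S_{i-1}(\theta^*)$ with the conditionally centered remainder handled by a martingale $L^2$ estimate, passing to the Riemann-sum limit $G(\theta)$ with uniformity over $\Theta$ obtained from the Sobolev embedding, and concluding via the a.s.\ continuity of $G$ and the matrix identity relating $G(\theta^*)$ to $\Gamma(\theta^*)$. The only discrepancy is notational: your $G$ carries the factor $-\tfrac{1}{2T}$ while the paper's carries $+\tfrac{1}{2}$ with no $1/T$; your normalization is in fact the one consistent with the Riemann-sum limit $\tfrac{1}{n}\sum_i F(X_{t_{i-1}})\to\tfrac{1}{T}\int_0^T F(X_t)\,dt$ and with the definition of $\Gamma(\theta^*)$, so the paper's display evidently drops a $1/T$.
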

 \begin{proof}
 	From (\ref{ex2partial2}), 
 	\beas
 	&&\frac{1}{n}\partial_\theta^2\bbH_n(\theta)[u^{\otimes2}]\nonumber\\&=&  -\frac{1}{2n }\sum_{i=1}^n\bigg\{{\rm Tr}\bigg(\partial^2_\theta S_{i-1}^{-1}( \theta)S_{i-1}(\theta^*)\big[u^{\otimes2}\big]\bigg)+\partial^2_\theta\log|S_{i-1}|(\theta)[u^{\otimes2}]\bigg\}+o_P(1)|u|^2\\
 	&=&  -G(\theta)[u^{\otimes2}]+o_P(1)|u|^2,
 	\eeas  where $o_P(1)$ satisfies $\sup_{\theta \in \Theta}|o_P(1)| \to^P 0$ from Sobolev's inequalities, and $G(\theta)$ is defined as
 	\beas
 	G(\theta)[u^{\otimes2}]&=&\frac{1}{2 }\int_0^T\bigg\{{\rm Tr}\bigg(\partial^2_\theta S^{-1}(X_t,  \theta)S(X_t, \theta^*)\big[u^{\otimes2}\big]\bigg)+\partial^2_\theta\log|S|(X_t, \theta)[u^{\otimes2}]\bigg\}dt.
 	\eeas
 	Then $G(\cdot)$ is almost surely continuous on $\Theta$, and $G(\theta^*)=\Gamma(\theta^*)$. Thus we obtain the desired result.
 	
 \end{proof}
 
 \begin{lemma}\label{ex2lemY}
 	\beas
 	\sup_{\theta \in \Theta}\bigg|\bbY_n(\theta)-\bbY(\theta)\bigg| \overset{P}{\to} 0.
 	\eeas
 \end{lemma}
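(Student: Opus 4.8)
The plan is to start from the representation (\ref{ex2Y}), in which the remainder $o_P(1)$ is uniform in $\theta$ by Sobolev's inequalities (exactly as for (\ref{ex2partial2})), and to treat the leading sum by separating its predictable part from a martingale-type fluctuation. Since $\sigma_{i-1}\Delta_i w$ is, conditionally on $\calf_{t_{i-1}}$, centered Gaussian with covariance $h\,\sigma_{i-1}\sigma_{i-1}'=hS_{i-1}(\theta^*)$, I would write
\[
h^{-1}(\sigma_{i-1}\Delta_i w)^{\otimes 2} = S_{i-1}(\theta^*) + \zeta_i, \qquad \zeta_i := h^{-1}(\sigma_{i-1}\Delta_i w)^{\otimes 2} - S_{i-1}(\theta^*),
\]
so that $E[\zeta_i\mid\calf_{t_{i-1}}]=0$. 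Substituting and using $(S_{i-1}^{-1}(\theta)-S_{i-1}^{-1}(\theta^*))[S_{i-1}(\theta^*)]={\rm Tr}\big(S_{i-1}^{-1}(\theta)S_{i-1}(\theta^*)-I_\sfd\big)$, this decomposes $\bbY_n(\theta)$ into the Riemann sum
\[
R_n(\theta) = -\frac{1}{2n}\sum_{i=1}^n\Big\{{\rm Tr}\big(S_{i-1}^{-1}(\theta)S_{i-1}(\theta^*) - I_\sfd\big) + \log\frac{|S_{i-1}(\theta)|}{|S_{i-1}(\theta^*)|}\Big\},
\]
plus the fluctuation $-\frac{1}{2n}\sum_{i=1}^n\xi_i(\theta)$ with $\xi_i(\theta):=(S_{i-1}^{-1}(\theta)-S_{i-1}^{-1}(\theta^*))[\zeta_i]$, plus the uniform $o_P(1)$.

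It then remains to prove the two uniform convergences $\sup_\theta|R_n(\theta)-\bbY(\theta)|\overset{P}{\to}0$ and $\sup_\theta|\frac1n\sum_i\xi_i(\theta)|\overset{P}{\to}0$. For both I would invoke the Sobolev embedding $W^{1,p}({\rm Int}(\Theta))\hookrightarrow C(\Theta)$ with $p>\sfp$, which reduces the task to the pointwise $L^p$-estimates $E[|R_n(\theta)-\bbY(\theta)|^p]\to0$ and $E[|\frac1n\sum_i\xi_i(\theta)|^p]\to0$ together with the same estimates for the $\theta$-gradients, integrated over $\theta$. Writing $\frac1n=h/T$, the quantity $R_n(\theta)$ is the Riemann sum of $\bbY(\theta)=-\frac{1}{2T}\int_0^T g(X_t,\theta)\,dt$ with $g(x,\theta)={\rm Tr}\big(S^{-1}(x,\theta)S(x,\theta^*)-I_\sfd\big)+\log(|S(x,\theta)|/|S(x,\theta^*)|)$; its convergence follows from the continuity of $t\mapsto g(X_t,\theta)$ and a bound on the modulus of continuity of $X$. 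For the fluctuation, the $\xi_i(\theta)$ form a martingale-difference array, so by conditional orthogonality $E[(\frac1n\sum_i\xi_i(\theta))^2]=n^{-2}\sum_iE[\xi_i(\theta)^2]=O(n^{-1})$, and higher moments are controlled by the Burkholder--Davis--Gundy inequality, the Gaussian moments of $\zeta_i$ being uniformly bounded.

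All the required moment bounds — on $g(X_t,\theta)$, on $\xi_i(\theta)$, and on their $\theta$-derivatives, uniformly over $\Theta$ — rest on ellipticity [{\bf I4}], which keeps $S^{-1}(\cdot,\theta)$ and its $\theta$-derivatives (rational functions of $\theta$ with determinant bounded away from zero) uniformly bounded on $\calx\times\Theta$, on the polynomial growth [{\bf I3}] of $f$, and on the moment condition [{\bf I2}], which yields $\sup_{t\leq T}E[|X_t|^p]<\infty$ for all $p$ via the stochastic differential equation for $X$. The main obstacle is the uniform-in-$\theta$ control of the stochastic fluctuation term: one must differentiate $\xi_i(\theta)$ in $\theta$ and check that the resulting martingale-difference arrays still have $L^p$-norms vanishing at rate $n^{-1/2}$, so that the Sobolev estimate closes. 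This is routine once the moment bounds above are in place, and it is precisely the type of argument used in Uchida and Yoshida \cite{uchida2013quasi}, to which the proof of Theorem \ref{ex2thm} already refers.
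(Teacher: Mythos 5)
Your proposal is correct and takes essentially the same route as the paper: the paper's proof of Lemma \ref{ex2lemY} consists precisely of starting from (\ref{ex2Y}), replacing $h^{-1}(\sigma_{i-1}\Delta_iw)^{\otimes2}$ by its conditional expectation $S_{i-1}(\theta^*)$ so that the Riemann sum converges to $\bbY(\theta)$, and absorbing everything else into an $o_P(1)$ that is uniform in $\theta$ by the Sobolev embedding $W^{1,p}({\rm Int}(\Theta))\hookrightarrow C(\Theta)$. Your explicit decomposition into the predictable part $R_n(\theta)$ and the martingale-difference fluctuation $\xi_i(\theta)$, controlled pointwise in $L^p$ (together with $\theta$-derivatives) via Burkholder--Davis--Gundy before applying Sobolev, is exactly the argument the paper compresses into the phrase ``from Sobolev's inequalities.''
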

 \begin{proof}
 	From (\ref{ex2Y}), for any $\theta \in \Theta$,
 	\beas
 	\bbY_n(\theta)
 	&=&-\frac{1}{2n}\sum_{i=1}^n\bigg\{{\rm Tr}\bigg(\big(S_{i-1}^{-1}( \theta)-S_{i-1}^{-1}( \theta^*)\big)S_{i-1}(\theta^*)\bigg)+\log\frac{|S_{i-1}(\theta)|}{|S_{i-1}(\theta^*)|}\bigg\}+o_P(1)\\
 	&=& \bbY(\theta)+o_P(1).
 	\eeas where $o_P(1)$ satisfies $\sup_{\theta \in \Theta}|o_P(1)| \to^P 0$  from Sobolev's inequalities.
 	
 \end{proof}
 
\begin{proof}[Proof of Theorem \ref{SDEtaudisappear}]
	Redefine $\bbH_n$ as $\bbH_n(\theta, \tau) = \widetilde{\mathbb{\Psi}}_n (\theta, \tau)$.
	As (\ref{SDEsame}), from [{\bf I1}]-[{\bf I4}] and [{\bf I6}],
	\beas-\frac{1}{2\sqrt{n} h}\sum_{i=1}^n\partial_\theta S_{i-1}^{-1}( \theta^*)\big[\big(\Delta_iY-hg(X_{t_{i-1}}, \tau))^{\otimes2}\big]
	&=&-\frac{1}{2 }
	\sum_{i=1}^n\partial_\theta S_{i-1}^{-1}( \theta^*)\bigg[\bigg(\frac{\sigma_{i-1}\Delta_iw}{\sqrt h}\bigg)^{\otimes2}\bigg]+o_P(1),
	\eeas where $o_P(1)$ satisfies $\sup_{\tau \in \calt}|o_P(1)| \to^P 0$ from Sobolev's inequalities.
	Therefore, $\frac{1}{\sqrt n}\partial_\theta\bbH_n(\theta^*, \tau)$ satisfies (\ref{ex2partial1}). Similarly, (\ref{ex2partial2}) and (\ref{ex2Y}) hold for this $\bbH_n(\theta, \tau)$. Then use Lemmas \ref{ex2lemDelta}-\ref{ex2lemY} as Theorem \ref{ex2thm}, and we obtain (\ref{ex2hatu}) from Theorem \ref{thmB}. Also, since [{\bf A5}] holds for $\bbV_n$ and $\hat v_n$, (\ref{ex2hatv_n})  holds from Theorem \ref{thmB}.
\end{proof}

\bibliographystyle{spmpsci} 
\bibliography{bibtex-NonregularCondition}

\end{document}